\newtheorem{theorem}{Theorem}[section]
\newtheorem{lemma}[theorem]{Lemma}
\newtheorem{proposition}[theorem]{Proposition}
\newtheorem{corollary}[theorem]{Corollary}
\newtheorem{conjecture}[theorem]{Conjecture}
\theoremstyle{definition}
\newtheorem{definition}[theorem]{Definition}
\newtheorem{example}[theorem]{Example}
\theoremstyle{remark}
\newtheorem{remark}[theorem]{Remark}
\newcommand{\la}{\lambda}
\newcommand{\al}{\alpha}
\begin{document}

\title{Jack vertex operators and realization of Jack functions}
\author{Wuxing Cai}
\address{School of Sciences,
South China University of Technology, Guangzhou 510640, China}
\email{caiwx@scut.edu.cn}
\author{Naihuan Jing}
\address{Department of Mathematics,
   North Carolina State University,
   Raleigh, NC 27695-8205, USA and School of Sciences,
South China University of Technology, Guangzhou 510640, China}
\email{jing@math.ncsu.edu}
\thanks{*Corresponding author: Naihuan Jing}
\keywords{Symmetric functions, Jack polynomials, vertex operators}
\subjclass[2000]{Primary: 05E05; Secondary: 17B69, 05E10}

\begin{abstract} We give an iterative method to realize general Jack
functions using vertex operators. We first show some cases of
Stanley's conjecture on positivity of
the Littlewood-Richardson coefficients, and then use this method to
give a new realization of Jack functions.
We also show in general that the images of products of Jack vertex
operators form a basis of symmetric functions. In particular this gives a new proof of linear independence for
the rectangular and marked rectangular Jack vertex operators.
Finally a generalized Frobenius formula for Jack functions is given and used
for an evaluation of Dyson integrals and even powers of Vandermonde
determinant.
\end{abstract}
 \maketitle

\section{Introduction}

Jack symmetric functions play a fundamental role in the
Calegero-Suther\-land model in statistical mechanics \cite{F} and they
also appear as extremal vectors in the Fock representation of the
Virasoro algebra \cite{MY, CJ}. From algebraic and combinatorial
viewpoints, explicit realization of Jack functions is a desirable
task and an interesting question.
 In the special case of Schur functions the explicit realization is the Jacobi-Trudi formula, which
 has a long list of
applications ranging from Schubert calculus in algebraic geometry to
various combinatorial problems. The Jacobi-Trudi formula for
rectangular Jack functions was known \cite{Ma}, and it also reveals
the connection with a special case of Dyson's conjecture for
constant terms and even powers of Vandermonde determinants
\cite{BBL}.

In this paper we first settle a long-standing problem of realizing Jack functions of all shapes
using vertex operators. The realization
is based upon our earlier work of reformulation of Jack functions
for rectangular-shapes using vertex operators \cite{MY, FF, CJ}. The key idea behind our approach
is to realize the general Jack function by a filtration of rectangular shapes and then introduce a procedure
to encode the filtration in the Fock space. We would like to remark that this new vertex operator
realization of classical symmetric functions is different from earlier realizations of Schur and Hall-Littlewood
polynomials \cite{J1, J2}.

 As an application of the new iterative vertex operator realization of Jack functions, we derive a generalized Frobenius
formula expressing a Jack function as a  linear combination of power sum symmetric functions. This formula
is new even in the case of zonal spherical functions
(in both orthogonal and symplectic cases).

As our second goal, we study the connection between the
contraction function of the Jack vertex operators for the
rectangular shapes and Dyson's constant term conjecture, and
evaluate more terms in the generalized Vandermonde determinant or
even powers of the latter. One of the key steps is to compute the
terms using our earlier work \cite{CJ} on Jack symmetric functions
of rectangular shapes. Explicitly, vertex operators provide an
effective tool to compute all coefficients of
$z_1^{-\la_1}\cdots z_i^{-\la_i}z_{s-j+1}^{\mu_j}\cdots
z_{s-1}^{\mu_2}z_s^{\mu_1} ~ (i+j\leq s)$,
in the product $\prod_{1\leq i\neq j\leq
s}(1-z_iz_j^{-1})^{\alpha}$, where
$\la=(\la_1\cdots\la_i)$, $\mu=(\mu_1,\cdots,\mu_j)$ are two partitions of equal weight.

Our third goal is to prove that the products of Jack vertex
operators form a basis in the Fock space associated to the Jack
functions. This is deeply rooted to a basic question in the
representation theory of higher level representations of affine Lie
algebras. Lepowsky-Wilson's work on Rogers-Ramanujan identities was
partly related to linear independence of some products of vertex
operators similar to our Jack vertex operators. Our case is simpler
as we can prove a general result that is equivalent to a statement
for products of vertex operators in arbitrary level representation
of the affine Lie algebra $\widehat{sl}_2$ \cite{LP, CJ}.

Another by-product of our approach is that we can prove several
non-trivial cases of Stanley's conjecture about the
Littlewood-Richardson coefficients. We first give a formula on the
product of two Jack functions of dual shapes, which can be viewed as
a Jack case generalization of Schubert-like product in the
cohomology of the Grassmannian variety. This turns out to be a key
in computations about Stanley's conjecture. We then generalize the
result to marked rectangular (rectangle with part of the last row/column removed) case.

 This paper is organized as
follows. In section 2, we recall some basic notions about symmetric
functions and collect a few results from \cite{CJ} for later use. In
section 3 we first give a refinement and a generalization  of the
main result of \cite{CJ} and a new method to compute the
coefficients of the monomials in the generalized Vandermonde
determinant. Then we prove that the vertex operator products form a
basis in the Fock space of symmetric functions, and their basic
properties are also listed.  Section 4 treats special cases of
Stanley's conjecture, and then solves the problem of realizing Jack
functions by vertex operators in general.

 \section{The realization of Jack functions of rectangular partitions}\vskip 0.1in
   We first recall basic notions about symmetric functions.
   A partition $\lambda=(\la_1,\lambda_2,\cdots,\lambda_s)$ is a sequence of nonnegative
   integers in non-increasing order. Sometimes we also write $\la=(1^{m_1}~2^{m_2}\cdots)$,
   where $m_i=m_i(\la)$ is the multiplicity of $i$ occurring among the parts of
    $\la$. The length of $\la$, denoted as $l(\lambda)$, is the number of non-zero
    parts in $\lambda$, and the weight $|\lambda|$ is $\sum_iim_i$. It is
    convenient to denote $m(\lambda)!=m_1!m_2!\cdots$, $z_\lambda=\prod_{i\geq 1}i^{m_i}m_i!$.
     A partition $\la$ of weight $n$ is usually denoted by $\la\vdash n$. The set of all
partitions is
   denoted by $\mathcal {P}$. For two partitions
$\la=(\lambda_1,\lambda_2,\cdots)$ and $\mu=(\mu_1,\mu_2,\cdots)$ of
the same weight, we say that $\lambda\geq\mu$ if
$\lambda_1+\cdots+\lambda_i\geq\mu_1+\cdots+\mu_i$ for all
   $i$, this defines a partial order (the dominance order) in $\mathcal
   {P}$. If $m_i(\mu)\leq m_i(\la)$ for all $i$, we say that $\mu$ is {\it exponentially contained} in $\la$ or
   {\it contained by multiplicity} and we use the notation $\mu\subset'\la$.
    Partition $\la\backslash\mu$ is then defined by
    $m_i(\la\backslash\mu)=m_i(\la)-m_i(\mu)$, and we also define $\binom{m(\la)}{m(\mu)}=\prod_i\binom{m_i(\la)}{m_i(\mu)}$.
    Sometimes we also identify $\la$ with its Young or Ferrers diagram
   $Y(\la)=\{(i,j)|1\leq i\leq l(\la), 1\leq j\leq \la_i\}$ which is a left justified
   diagram made of $l(\la)$ rows of boxes (or squares as being called by some authors)
   with $\la_i$ boxes in the $i$th row. A partition of the form $\la=(r^s)$ is called a rectangular partition or a partition of rectangular shape. If the Young diagram of $\mu$ is a subset of the Young diagram of $\la$, we write $\mu\subset\la$,  define $\la-\mu$ to be the set difference $Y(\la)-Y(\mu)$. We call $\la-\mu$ a horizontal $n$-strip if there are $n$ boxes in $\la-\mu$ and with no two boxes in the same column of $\la$. The conjugate of $\la$, denoted as $\la'$, is a partition whose diagram is the transpose of the diagram of $\la$, thus $\la'_i$ is the number elements in the set $\{j|\la_j\geq i\}$. For a box $s=(i,j)\in\la$, the
lower hook-length $h_*^\lambda(s)$ is defined to be
   $\alpha^{-1}(\lambda_i-j)+(\lambda'_j-i+1)$ in the rational field $\mathbb{Q}(\al)$ of indeterminant $\al$, and the upper hook-length
$h^*_\lambda(s)=\alpha^{-1}(\lambda_i-j+1)+(\lambda'_j-i)$.
\begin{example}Fig.1 (a) is partition $\la=(5,3,3,1)$, Fig.1 (b) is the diagram of partition $\la'$. For $a=(1,1), b=(2,2)\in Y(\la)$, we have $h_*^\la(a)=4\al^{-1}+4$, $h^*_\la(b)=2\al^{-1}+1$. As shown in Fig.1 (c), the coefficient of $\al^{-1}$ (resp. constant) is the number of boxes the solid line passes horizontally (resp. vertically).
\end{example}
 \setlength{\unitlength}{0.5cm}
\begin{picture}(1,1)
 \put(0,0){\line(1,0){5}}
 \put(0,-1){\line(1,0){5}}
 \put(0,-2){\line(1,0){3}}
 \put(0,-3){\line(1,0){3}}
 \put(0,-4){\line(1,0){1}}
\put(0,0){\line(0,-1){4}}
\put(1,0){\line(0,-1){4}}
\put(2,0){\line(0,-1){3}}
\put(3,0){\line(0,-1){3}}
\put(4,0){\line(0,-1){1}}
\put(5,0){\line(0,-1){1}}
\put(0,-5.9){Fig.1 (a)}
\put(2.3,-1.7){$-$}
\put(1.3,-2.7){$-$}
\put(2.3,-2.7){$-$}

\put(9,0){\line(1,0){4}}
\put(9,-1){\line(1,0){4}}
\put(9,-2){\line(1,0){3}}
\put(9,-3){\line(1,0){3}}
\put(9,-4){\line(1,0){1}}
\put(9,-5){\line(1,0){1}}
\put(9,0){\line(0,-1){5}}
\put(10,0){\line(0,-1){5}}
\put(11,0){\line(0,-1){3}}
\put(12,0){\line(0,-1){3}}
\put(13,0){\line(0,-1){1}}

\put(9,-5.9){Fig.1 (b)}

\put(10.3,-1.8){*}
\put(11.3,-1.8){*}
\put(10.3,-2.8){*}
\put(11.3,-2.8){*}
 \put(18,0){\line(1,0){5}}
 \put(18,-1){\line(1,0){5}}
 \put(18,-2){\line(1,0){3}}
 \put(18,-3){\line(1,0){3}}
 \put(18,-4){\line(1,0){1}}
\put(18,0){\line(0,-1){4}}
\put(19,0){\line(0,-1){4}}
\put(20,0){\line(0,-1){3}}
\put(21,0){\line(0,-1){3}}
\put(22,0){\line(0,-1){1}}
\put(23,0){\line(0,-1){1}}

\put(18.1,-0.4){$a$}

 \psline[linestyle=dashed,dash=0.05 0.05](9.3,-0.25)(9.5,-0.25)
\put(19,-0.5){\vector(1,0){4}}
\put(18.5,-0.5){\vector(0,-1){3.5}}

\put(19.1,-1.6){$b$}
 \psline[linestyle=dashed,dash=0.05 0.05](9.75,-0.75)(9.75,-1)
\put(19.5,-1.5){\vector(1,0){1.5}}
\put(19.5,-2){\vector(0,-1){1}}
\put(18,-5.9){Fig.1 (c)}
\end{picture}

\vskip 3.7cm

We sometimes write partition $\la=(a_1^{n_1}a_2^{n_2}\cdots a_s^{n_s})$, with $m_{a_i}(\la)=n_i>0$, and $a_1>a_2>\cdots>a_s>0$. For each $i=1,2,\cdots s$, we replace the part $a_i$ of $\la$ by $a_{i+1}$, setting $a_{s+1}=0$, therefore obtaining a new partition, denoted as $\la(i)$. Equivalently, $$\la(i)=(a_1^{n_1}a_2^{n_2}\cdots a_{i-1}^{n_{i-1}}a_{i+1}^{n_i+n_{i+1}}a_{i+2}^{n_{i+2}}\cdots a_s^{n_s}).$$ Then we define the $i$th corner of $\la$ as $C_i(\la)=\la-\la(i)$. $C_i(\la)$ is of the same shape as the rectangle $((a_i-a_{i+1})^{n_i})$. $\la$ has $s$ corners, we write $n_c(\la)=s$ as the number of corners of $\la$. $n_c(\la)=1$ if and only if $\la$ is a rectangular partition and in this case the unique corner is $\la$ itself. For a partition $\nu=(\nu_1,\cdots,\nu_l)$ with $\nu_1\leq a_i-a_{i+1}, l\leq n_i$,  we define the upside-down $\nu$ of the $C_i(\la)$ as the union of $B_j$ for $j=1,2,\cdots,l$, where $B_j$ are the rightmost $\nu_j$ boxes on the $(n_i+1-j)$th row of $C_i(\la)$. For example, we can write $\la'=(4,3,3,1,1)$ as $(4^13^21^2)$, $\la'(2)=(4^11^4)$ and the second corner of $\la'$ is $C_2(\la')=\la'-\la'(2)=\{(2,2),(2,3),(3,2),(3,3)\}$, i.e. the boxes marked by stars. An upside-down $(2,1)$ of the second corner of $\la$ is marked by the minus sign in Fig.1 (a).
   \begin{definition}\label{bottomnotation}
For a subset $S$ of the Young diagram of $\la$, define $h_*^\la(S)$ as
$\prod_{s\in S}h_*^\la(s)$, and similarly for $h_\la^*(S)$. For a
pair of partitions $\mu\subset\la$, we say that a square $s$ is
based if $s$ is in a column which contains at least one square
of $\la-\mu$, and it is un-based otherwise. We denote $\mu_b$
(resp. $\la_b$) as the set of the based squares of $\mu$(resp.
$\la$), and $\mu_u$ (resp. $\la_u$) as the set of the un-based
squares of $\mu$ (resp. $\la$).
\end{definition}

%

   For a set (finite or infinite) of variables $z_1, z_2, \cdots$
   and a partition $\lambda$, we define $z^\lambda=z_1^{\lambda_1}
   z_2^{\lambda_2}\cdots$. For a Laurent polynomial $F(z,w)$ in
   variables
   $z_1,z_2,\cdots,w_1,w_2,\cdots$, we denote the coefficient of monomial $z^\lambda/w^\mu$ as
   $C_{z^\la/w^\mu}.F(z,w)$.

The ring $\Lambda$ of symmetric functions in variable $x_i$'s is a $\mathbb Z$-module
with basis $m_{\la}=(m(\la)!)^{-1}\sum x^{\la_1}_{i_1}\cdots x_{i_k}^{\la_k}$, where the multi-indices $(i_1,\cdots, i_k)$
run through $(\mathbb{Z}_{>0})^k$.
We define $p_n=m_{(n)}=\sum_{i\geq1}x_i^n$. The power sum symmetric functions
$p_{\la}=p_{\la_1}\cdots p_{\la_k},$ $\la\in\mathcal{P}$, form a basis in
$\Lambda_\mathbb{Q}=\Lambda\otimes_\mathbb{Z}\mathbb{Q}$.

Let $F=\mathbb{Q}(\alpha)$, the field of rational functions in
indeterminate $\alpha$. For two partitions $\la, \mu \in \mathcal P$
the Jack scalar product on $\Lambda_F=\Lambda\otimes_\mathbb{Z}F$
is given by
\begin{align} \label{def}
\langle p_{\la}, p_{\mu}\rangle=\delta_{\la,
\mu}\alpha^{-l(\la)}z_\lambda,
\end{align}
where $\delta$ is the Kronecker symbol. Here for convenience we use
$\alpha^{-1}$ instead of $\alpha$ in light of Sections 2 and 3.

The Jack symmetric functions or Jack polynomials \cite{Ja} $P_{\lambda}(\alpha^{-1})=P_\la(x_1, x_2,\cdots; \al^{-1})$ for
$\la\in\mathcal{P}$ are defined by the following conditions:
\begin{align*}
&P_{\la}(\alpha^{-1})=\sum_{\la\geq\mu}c_{\la
\mu}(\alpha^{-1})m_{\mu},\\
&\langle
P_{\la}(\alpha^{-1}),P_{\mu}(\alpha^{-1})\rangle=0~~\mbox{for}~\la\neq\mu,
\end{align*}
where $c_{\la \mu}(\alpha^{-1})\in F$ ($\la, \mu \in \mathcal P$)
and $c_{\la \la}(\alpha^{-1})=1$. Note that symmetric functions $m_\la, p_\la, P_\la(\al^{-1})$, and the following $Q_\la(\al^{-1}), J_\la(\al^{-1})$ are all in variable $x_1,x_2,\cdots$. The existence of Jack symmetric
functions was proved in \cite{M}. Further properties of Jack functions are studied in
\cite{S}, \cite{KS} and \cite{H}. In this work we will also need two other normalizations.
One is the dual Jack functions $Q_{\lambda}(\alpha^{-1})=\langle
P_{\lambda}(\alpha^{-1}),
P_{\lambda}(\alpha^{-1})\rangle^{-1}P_{\lambda}(\alpha^{-1})$. Another is the integral Jack
function
$J_\lambda(\alpha^{-1})$, which is defined by $J_\lambda(\alpha^{-1})=|\la|!(c_{\la,(1^{|\la|})})^{-1}P_\la(\al^{-1})$.
Let us recall Pieri formula of Jack functions for later use.
\begin{theorem}\cite{S}\label{T:Pieriformula}
For partitions $\mu, \la$ and non-negative integer $n$, the scalar product $\langle J_\mu(\al^{-1})J_n(\al^{-1}), J_\la(\al^{-1})\rangle\neq0$ if and only if $\la-\mu$ is a horizontal $n$-strip. And in this case
$$\langle J_\mu(\al^{-1})J_n(\al^{-1}), J_\la(\al^{-1})\rangle=n!\al^{-n} h^*_\la(\la_u)h_*^\la(\la_b) h^*_\mu(\mu_b )h_*^\mu(\mu_u).$$
\end{theorem}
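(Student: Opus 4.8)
The plan is to deduce the theorem from the Pieri rule for Jack functions in the $P$-normalization (contained in \cite{S}; see also \cite{Ma}) together with a change of normalization. Write $g_n=Q_{(n)}(\al^{-1})$ for the one-row dual Jack function. The classical Pieri rule states that
$$g_n\,P_\mu(\al^{-1})=\sum_{\la}\varphi_{\la/\mu}\,P_\la(\al^{-1}),$$
where $\la$ runs over the partitions for which $\la-\mu$ is a horizontal $n$-strip, and, with the based squares of Definition \ref{bottomnotation},
$$\varphi_{\la/\mu}=\frac{h_*^\la(\la_b)\,h^*_\mu(\mu_b)}{h^*_\la(\la_b)\,h_*^\mu(\mu_b)}.$$
The point here is that the columns of $\la$ meeting $\la-\mu$ are exactly the based columns, so that only the based squares $\la_b,\mu_b$ contribute to $\varphi_{\la/\mu}$.

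First I would collect the three normalization constants. From the definition of $J_\la(\al^{-1})$ together with Stanley's evaluation of $c_{\la,(1^{|\la|})}$ one has $J_\la(\al^{-1})=h_*^\la(\la)P_\la(\al^{-1})$, and from $\langle P_\la,P_\la\rangle=h^*_\la(\la)/h_*^\la(\la)$ one gets $Q_\la(\al^{-1})=\big(h_*^\la(\la)/h^*_\la(\la)\big)P_\la(\al^{-1})$, hence $J_\la(\al^{-1})=h^*_\la(\la)Q_\la(\al^{-1})$. Applying the last identity to a single row, where $h^*_{(n)}((n))=\prod_{j=1}^n\al^{-1}(n-j+1)=n!\,\al^{-n}$, gives $J_n(\al^{-1})=n!\,\al^{-n}\,g_n$.

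Now I would expand the scalar product. Since $\langle P_\nu(\al^{-1}),Q_\la(\al^{-1})\rangle=\delta_{\nu\la}$, pairing $g_nP_\mu$ with $Q_\la$ extracts the coefficient $\varphi_{\la/\mu}$, and therefore
$$\langle J_\mu(\al^{-1})J_n(\al^{-1}),J_\la(\al^{-1})\rangle=h_*^\mu(\mu)\cdot n!\,\al^{-n}\cdot h^*_\la(\la)\cdot\varphi_{\la/\mu}.$$
The scalar product is compatible with the grading and $\varphi_{\la/\mu}$ is a finite product of ratios of hook lengths, each a nonzero element of $\mathbb Q(\al)$; hence the left side is nonzero exactly when $\la-\mu$ is a horizontal $n$-strip, which is the ``if and only if'' assertion. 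For the closed form, substitute the displayed value of $\varphi_{\la/\mu}$, factor $h_*^\mu(\mu)=h_*^\mu(\mu_b)h_*^\mu(\mu_u)$ and $h^*_\la(\la)=h^*_\la(\la_b)h^*_\la(\la_u)$, and cancel the factors $h_*^\mu(\mu_b)$ and $h^*_\la(\la_b)$; what survives is exactly $n!\,\al^{-n}\,h^*_\la(\la_u)h_*^\la(\la_b)h^*_\mu(\mu_b)h_*^\mu(\mu_u)$.

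The one step that really needs care is the combinatorics: one must verify that the set of columns of $\la$ containing a square of $\la-\mu$ coincides with the based columns, so that the cells entering $\varphi_{\la/\mu}$ split cleanly as $\la_b$ and $\mu_b$ with no contribution from $\la_u$ or $\mu_u$; and one must keep the paper's $\al\leftrightarrow\al^{-1}$ convention consistent throughout, in particular in the evaluations $J_\la=h_*^\la(\la)P_\la$ and $h^*_{(n)}((n))=n!\,\al^{-n}$. It is prudent to double-check the resulting identity on the base cases $\mu=\emptyset$ and $n=1$.
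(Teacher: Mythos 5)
The paper offers no proof of this statement at all: it is imported verbatim from Stanley \cite{S} (his Pieri formula, Theorem 6.1 there), so there is nothing internal to compare against. Your derivation is a correct and self-contained reduction to the more commonly quoted form of the Pieri rule, and the bookkeeping checks out. Specifically: with the paper's conventions one has $h^*_{(n)}((n))=\prod_{j=1}^n\al^{-1}(n-j+1)=n!\,\al^{-n}$, so $J_n(\al^{-1})=n!\,\al^{-n}g_n$; pairing $P_\mu g_n$ against $Q_\la$ does extract $\varphi_{\la/\mu}$; and since $\la-\mu$ is a horizontal strip, each based column of $\la$ contains exactly one box of $\la-\mu$, so $\la_b=\mu_b\sqcup(\la-\mu)$ and Macdonald's product over $C_{\la/\mu}$ (with the convention $b_\mu(s)=1$ for $s\notin\mu$) is exactly your $h_*^\la(\la_b)h^*_\mu(\mu_b)/\bigl(h^*_\la(\la_b)h_*^\mu(\mu_b)\bigr)$. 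The cancellation then yields precisely $n!\,\al^{-n}h^*_\la(\la_u)h_*^\la(\la_b)h^*_\mu(\mu_b)h_*^\mu(\mu_u)$, and the nonvanishing claim follows because every lower/upper hook is of the form $\al^{-1}a+l$ with $(a,l)\neq(0,0)$, hence invertible in $\mathbb{Q}(\al)$. The two places you flag as needing care are indeed the only delicate ones, and a useful sanity check that you attached $\varphi$ (rather than $\psi$) to the correct normalization is the case $\mu=\emptyset$: $P_\emptyset g_n=g_n=b_{(n)}P_{(n)}$ forces $g_nP_\mu=\sum_\la\varphi_{\la/\mu}P_\la$, as you assumed. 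So the argument stands; it simply replaces a citation by a derivation from an equivalent standard statement.
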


For later use, we would like to introduce some terminology here. For an operator $S$ on symmetric functions, if $\langle J_\mu, S.J_\la\rangle$ is non-zero, we say that $S$ can remove $\la-\mu$ from $\la$; otherwise we say that $S$ can not remove $\la-\mu$ from $\la$. For example, $J_n(\al^{-1})^*$ can remove a horizontal $n$-strip from $\la$ if $\la_1\geq n$. Furthermore if $\la$ is of rectangular shape, then $J_n(\al^{-1})^*$ can only remove the $n$ boxes on the right side of last row of $\la$.

 The generalized complete symmetric functions of $\Lambda_F$ are
defined by
\begin{equation}
q_{\la}(\alpha^{-1})=Q_{\la_1}(\alpha^{-1})Q_{\la_2}(\alpha^{-1})\cdots
Q_{\la_l}(\alpha^{-1}),
\end{equation}
where $Q_n(\alpha^{-1})=Q_{(n)}(\alpha^{-1})$ has a
generating function given by
\begin{equation}\label{generatingfunctionofonerowJack}
exp\Big(\sum_{n=1}^\infty
\frac{z^n}{n}{\alpha}p_{n}\Big)=\sum_{n\in\mathbb{Z}}
Q_n(\al^{-1})z^{n}.
\end{equation}
Thus $Q_0(\al^{-1})=1$ and $Q_n(\al^{-1})=0$ for $n<0$.
We list some of the useful properties of $q_{\lambda}$ and $J_\lambda$
as follows.

\begin{lemma} \cite{S}\label{L:triangular}
For any partition $\lambda$, $\nu$, one has
\begin{align*}
&\langle q_{\la}(\alpha^{-1}),m_{\nu}\rangle=\delta_{\la,\nu},\\
&Q_{\lambda}(\alpha^{-1})=q_{\la}(\alpha^{-1})+\sum_{\mu>\lambda}a_{\lambda\mu}(\al^{-1})q_{\mu}(\alpha^{-1}),\\
&J_\lambda(\al^{-1})=h^\la_*(\la)P_\la(\al^{-1}),\\
&J_\lambda(\al^{-1})=h_\la^*(\la)Q_\la(\al^{-1}),
\end{align*}
where 
$a_{\lambda\mu}(\al^{-1})\in F=\mathbb{Q}(\al)$.
\end{lemma}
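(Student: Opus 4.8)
The plan is to treat the four statements one at a time, noting first that the last reduces to the third: granting $J_\la(\al^{-1})=h_*^\la(\la)P_\la(\al^{-1})$, one combines it with Stanley's norm formula $\langle J_\la(\al^{-1}),J_\la(\al^{-1})\rangle=h_*^\la(\la)h_\la^*(\la)$ to get $\langle P_\la,P_\la\rangle=h_\la^*(\la)/h_*^\la(\la)$, whence $Q_\la(\al^{-1})=\langle P_\la,P_\la\rangle^{-1}P_\la=\bigl(h_*^\la(\la)/h_\la^*(\la)\bigr)P_\la=h_\la^*(\la)^{-1}J_\la(\al^{-1})$, which is the fourth statement. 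So the substantive content is in the first three.

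For $\langle q_\la,m_\nu\rangle=\delta_{\la\nu}$ I would use the Cauchy kernel. Introducing a second alphabet $y_1,y_2,\dots$ and applying \eqref{generatingfunctionofonerowJack} with $z=y_i$ for each $i$, both $\sum_{\mu\in\mathcal P}m_\mu(y)q_\mu(x;\al^{-1})$ (obtained by collecting $y$-monomials by partition type) and $\sum_{\rho\in\mathcal P}\al^{l(\rho)}z_\rho^{-1}p_\rho(x)p_\rho(y)$ (obtained by re-summing the exponential) equal the product $\prod_{i\ge1}\exp\bigl(\sum_{n\ge1}\frac{\al}{n}p_n(x)y_i^{\,n}\bigr)$, so that
\[
\sum_{\mu\in\mathcal P}m_\mu(y)\,q_\mu(x;\al^{-1})=\sum_{\rho\in\mathcal P}\al^{\,l(\rho)}z_\rho^{-1}p_\rho(x)p_\rho(y);
\]
by \eqref{def} the right-hand side is the reproducing kernel of the Jack scalar product, so pairing the identity (in the $x$-variables) with $m_\nu(x)$ gives $\sum_\mu m_\mu(y)\langle q_\mu,m_\nu\rangle=m_\nu(y)$, hence $\langle q_\mu,m_\nu\rangle=\delta_{\mu\nu}$. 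In particular $\{q_\mu\}$ is the basis dual to $\{m_\nu\}$, so writing $Q_\la=\sum_\mu a_{\la\mu}q_\mu$ we have $a_{\la\mu}=\langle Q_\la,m_\mu\rangle$; inverting the dominance-unitriangular transition matrix $(c_{\la\mu})$ gives $m_\mu=P_\mu+\sum_{\nu<\mu}(*)P_\nu$, and since $\langle Q_\la,P_\mu\rangle=\langle P_\la,P_\la\rangle^{-1}\langle P_\la,P_\mu\rangle=\delta_{\la\mu}$ we get $a_{\la\mu}=0$ unless $\mu\ge\la$, with $a_{\la\la}=1$. This is the second statement.

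For $J_\la=h_*^\la(\la)P_\la$ I would reduce to the ``bottom coefficient'' identity $c_{\la,(1^n)}(\al^{-1})=n!/h_*^\la(\la)$, $n=|\la|$, since then $J_\la=|\la|!(c_{\la,(1^n)})^{-1}P_\la=h_*^\la(\la)P_\la$ by the definition of $J_\la$. The base case $\la=(n)$ is a direct computation: the explicit power-sum expansion $Q_n(\al^{-1})=\sum_{\rho\vdash n}\al^{l(\rho)}z_\rho^{-1}p_\rho$ read off from \eqref{generatingfunctionofonerowJack} gives $\langle Q_n,Q_n\rangle=\sum_{\rho\vdash n}\al^{l(\rho)}z_\rho^{-1}=h_*^{(n)}((n))/h_{(n)}^*((n))$ and $[m_{(1^n)}]Q_n=\al^n$, so $c_{(n),(1^n)}=\langle P_n,P_n\rangle[m_{(1^n)}]Q_n=\langle Q_n,Q_n\rangle^{-1}\al^n=n!/h_*^{(n)}((n))$. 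For general $\la$ I would induct on $|\la|$: by the duality above, $c_{\la,(1^n)}=\langle Q_1(\al^{-1})\,q_{(1^{n-1})},P_\la\rangle=\langle q_{(1^{n-1})},Q_1(\al^{-1})^{*}P_\la\rangle$, and since $Q_1(\al^{-1})=\al p_1$ we have $Q_1(\al^{-1})^{*}=\partial/\partial p_1$, which acts on $P_\la$ as a sum over the boxes removable from $\la$ with coefficients that are ratios of hook-lengths --- the adjoint, single-box form of Theorem \ref{T:Pieriformula}. Iterating $n$ times expresses $c_{\la,(1^n)}$ as a sum over the standard Young tableaux of shape $\la$ of products of these hook-ratios, which then collapses to $n!/h_*^\la(\la)$.

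The main obstacle is exactly this last step: checking that the hook-length ratios produced by the iterated single-box Pieri rule telescope --- over the removable boxes at each stage, and then along each tableau --- to the clean value $n!/h_*^\la(\la)$, equivalently that $\sum\langle J_1(\al^{-1})J_\mu(\al^{-1}),J_\la(\al^{-1})\rangle/\langle J_\mu(\al^{-1}),J_\mu(\al^{-1})\rangle=|\la|\al^{-1}$ with the sum over all $\mu$ for which $\la-\mu$ is a single box --- together with the norm formula it rests on. This combinatorial identity is Stanley's; alternatively one may keep this section self-contained by invoking \cite{S} directly for the third and fourth statements and retaining only the self-contained derivations of the first two.
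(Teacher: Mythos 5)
The paper gives no proof of this lemma at all --- it is quoted verbatim from Stanley \cite{S} --- so there is no argument of the paper to match; what you have written is a partly self-contained substitute. Your treatment of the first two identities is correct and standard: the Cauchy-kernel computation $\sum_\mu m_\mu(y)q_\mu(x;\al^{-1})=\sum_\rho \al^{l(\rho)}z_\rho^{-1}p_\rho(x)p_\rho(y)$ does give $\langle q_\mu,m_\nu\rangle=\delta_{\mu\nu}$, and the duality argument $a_{\la\mu}=\langle Q_\la,m_\mu\rangle$ combined with the unitriangularity of $(m_\mu)\leftrightarrow(P_\mu)$ and $\langle Q_\la,P_\mu\rangle=\delta_{\la\mu}$ correctly yields the dominance-triangular expansion with $a_{\la\la}=1$. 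Your base case $c_{(n),(1^n)}=n!/h_*^{(n)}((n))$ also checks out.

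For the third and fourth identities, however, the proposal does not close the argument, and you say so yourself: the reduction of statement 3 to $c_{\la,(1^n)}=n!/h_*^\la(\la)$ is fine, and your reformulation of the inductive step as $\sum_{\mu}\langle J_1J_\mu,J_\la\rangle/\langle J_\mu,J_\mu\rangle=|\la|\al^{-1}$ (sum over $\mu$ with $\la-\mu$ a single box) is the correct equivalent statement, but that identity --- together with the norm formula $\langle J_\la,J_\la\rangle=h_*^\la(\la)h^*_\la(\la)$ used to deduce statement 4 --- is exactly the substantive content of Stanley's theorems, and you leave it unproved. There is also a logical-order caveat: in \cite{S} the Pieri rule (Theorem \ref{T:Pieriformula} here) and the norm formula are established \emph{after} the bottom-coefficient result, so deriving statement 3 from them is really a consistency check rather than an independent proof, and one must be careful not to create a circle if the whole package is meant to be re-proved. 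Your stated fallback --- invoke \cite{S} for statements 3 and 4 and keep only the self-contained derivations of 1 and 2 --- is legitimate and in fact coincides with what the paper does for the entire lemma.
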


\subsection{Fock space $V$ and rectangular Jack functions}
 Let us recall the realization of rectangular Jack functions on
 the Fock space of the rank one vertex operator algebra.
  Let
  $$V=Sym(h_{-1},h_{-2},\cdots)\otimes
\mathbb{C}[\frac12\mathbb{Z}],$$
where
$V_0=Sym(h_{-1},h_{-2},\cdots)$ is the complex symmetric algebra
 generated by the variable $h_{-n} (n\in\mathbb N)$ and
  $\mathbb{C}[\frac12\mathbb{Z}]$ is the group algebra of $\frac{1}{2}\mathbb Z$ with
  generators
  $\{e^{nh}|n\in\frac{1}{2}\mathbb{Z}\}$ with multiplication given by
  $e^{mh}e^{nh}=e^{(m+n)h}$. The underlying Heisenberg algebra is
  the infinite dimensional Lie algebra generated by $h_n,
  n\in\mathbb Z-\{0\}$ subject to relations
  \begin{equation}
  [h_m, h_n]=m\alpha^{-1}\delta_{m, -n}I.
  \end{equation}
For convenience, we set $h_0=1\in V_0$. The associated scalar product of $V$ is given by

\begin{align}
\label{scalarproduct}
  \langle h_{-\lambda}
\otimes e^{mh},h_{-\mu} \otimes
  e^{nh}\rangle=z_\lambda {\alpha}^{-l(\lambda)} \delta_{\lambda,\mu}\delta_{mn},
\end{align}
 where for partition $\la=(\la_1,\la_2,\cdots)$, $h_{-\la}$ is defined to be $h_{-\la_1}h_{-\la_2}\cdots$.

For an operator $S$ on $V$ (or $\Lambda_F$), the conjugate of $S$, $S^*$ is defined
by $\langle S.u, v\rangle=\langle u, S^*.v\rangle$, $u,v\in V$.
For a symmetric function $f$, we also denote the multiplication operator
$g\rightarrow fg$ by the same symbol $f$. It can be shown that $h_{n}^*=h_{-n}$ for $n\in\mathbb{Z}$.

Note that $B=\{h_{-\la}\otimes e^{nh}|\la\in\mathcal{P}, n\in \frac{1}{2}\mathbb{Z}\}$ is a basis of $V$, we can define a linear map $T$ from $V$ to $\Lambda_F$ by
$$T(h_{-\lambda}\otimes
e^{nh})=p_\lambda.$$
 Comparing formulae (\ref{def}) and (\ref{scalarproduct}), we know that $T$ preserve the scalar product, i.e.
 $$\langle T(u), T(v)\rangle=\langle u, v\rangle,$$
 for every pair $u,v\in V$.

 We then define the vertex operators
\begin{align} &X(z)=exp\Big(\sum_{n=1}^\infty
\frac{z^n}{n}{\alpha}h_{-n}\Big) A
  exp\Big(\sum_{n=1}^\infty \frac{z^{-n}}{-n}2{\alpha}h_n\Big)=\sum_{n\in\mathbb{Z}}
  X_nz^{-n},\\\label{D:halfvertex}
&Y(z)=exp\Big(\sum_{n=1}^\infty
\frac{z^n}{n}{\alpha}h_{-n}\Big)=\sum_{n\in\mathbb{Z}} Y_nz^{-n},\\
&Y^*(z)=exp\Big(\sum_{n=
1}^{\infty}\frac{z^{n}}{n}{\alpha}h_n\Big)=\sum Y_n^*z^{-n},
\end{align}
where
\begin{eqnarray*}
      & h_n.v\otimes e^{sh}= ({\alpha}^{-1}n\frac{\partial}{\partial h_{-n}}v)\otimes e^{sh} &\mbox{if}\ \  n>0,\\
      & h_n.v\otimes e^{sh}=(h_{n}v)\otimes e^{sh} & \mbox{if}\ \ n<0,
\end{eqnarray*}
and 
\begin{equation}
A.v\otimes
e^{sh}=z^{(s+\frac{1}{2})2{\alpha}}v\otimes e^{(s+1)h}.
\end{equation}
 \vskip 0.1in
 \begin{remark}\label{R:one-row-Jack}
 Comparing (\ref{generatingfunctionofonerowJack}) with (\ref{D:halfvertex}), we know that $T(Y_{-n})=Q_n(\al^{-1})$ for $n\in\mathbb{Z}$. Moreover, because $h_{-n}^*=h_n$, we know that $Y_n^*$ is actually the conjugate of $Y_n$ for $n\in\mathbb{Z}$.

 \end{remark}
The following result  provides a realization of Jack functions of
rectangular shapes. For a partition $\la=(\la_1,\cdots,\la_l)$ of length $l$, we denote $X_{-\la}=X_{-\la_1}X_{-\la_2}\cdots X_{-\la_l}$, and $Y_{-\la}=Y_{-\la_1}Y_{-\la_2}\cdots Y_{-\la_s}$.

\begin{theorem} \cite{CJ} \label{L:squareJack}  For partition $\lambda=((k+1)^s,(k)^t)$ with $k\in
\mathbb{Z}_{\geq 0}$, $s\in \mathbb{Z}_{>0}$, $t\in \{0,1\}$ and $\al\in\mathbb{Z}_{>0}$, we
have
$$T\Big(X_{-\lambda}e^{-(s+t)h/2}\Big)=c_{s,t}(\alpha)Q_{\lambda}({\alpha}^{-1}),$$
where $c_{s,t}(\alpha)$ is a function of $\alpha$, and
$c_{s,t}(1)=(-1)^{s(s-1+2t)/2}s!$.
\end{theorem}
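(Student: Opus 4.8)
The plan is to bring $X_{-\la}e^{-(s+t)h/2}$ to normal-ordered form, identify its image through a triangularity property together with orthogonality, and extract the constant by specializing at $\al=1$. Write $l=s+t$ for the length of $\la=((k+1)^s,(k)^t)$. First I would normal order $X(z_1)\cdots X(z_l)e^{-lh/2}$: by the Heisenberg relation $[h_m,h_n]=m\al^{-1}\delta_{m,-n}$, each annihilation part $\exp\!\bigl(-\sum_{n\ge1}\frac{z_i^{-n}}{n}2\al h_n\bigr)$ of $X(z_i)$ contracts with the creation parts of the operators $X(z_j)$, $j>i$, producing the scalar $\prod_{1\le i<j\le l}(1-z_j/z_i)^{2\al}$; the shift operators $A$, applied in turn to $e^{-lh/2}$, produce $\prod_{i=1}^{l}z_i^{(l-2i+1)\al}$ and carry the charge to $l/2$; and the remaining annihilators kill the vacuum. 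Applying $T$, using $\sum_{m\ge0}Q_m(\al^{-1})z^m=\exp\!\bigl(\sum_{n\ge1}\frac{z^n}{n}\al p_n\bigr)$ and $X_{-\la_i}=C_{z_i^{\la_i}}X(z_i)$, this gives
\begin{equation*}
F:=T\bigl(X_{-\la}e^{-lh/2}\bigr)=C_{z^\la}\Bigl[\prod_{1\le i<j\le l}\Bigl(1-\frac{z_j}{z_i}\Bigr)^{\!2\al}\ \prod_{i=1}^{l}z_i^{(l-2i+1)\al}\ \prod_{i=1}^{l}\Bigl(\sum_{m\ge0}Q_m(\al^{-1})z_i^{m}\Bigr)\Bigr].
\end{equation*}

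Since $\al\in\mathbb Z_{>0}$, the first two factors form a Laurent polynomial, every monomial of which has total degree $0$; hence $F$ is a $\mathbb Z[\al]$-linear combination of products $Q_{\nu_1}(\al^{-1})\cdots Q_{\nu_l}(\al^{-1})=q_\nu(\al^{-1})$ with $|\nu|=|\la|$ and $\ell(\nu)\le l$. Now $\la=((k+1)^s,(k)^t)$ is exactly the dominance-minimal partition of $|\la|$ having at most $l$ parts, so the condition $\ell(\nu)\le l$ is equivalent to $\nu\ge\la$; therefore, by Lemma~\ref{L:triangular}, $F=\sum_{\nu\ge\la}d_\nu(\al^{-1})Q_\nu(\al^{-1})$ for suitable $d_\nu(\al^{-1})\in F$, and its leading coefficient $d_\la=:c_{s,t}(\al)$ is read off directly from the Laurent polynomial.

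It remains to see $d_\nu=0$ for every $\nu>\la$; since $\langle Q_\mu(\al^{-1}),P_\nu(\al^{-1})\rangle=\delta_{\mu\nu}$, this amounts to $\langle F,P_\nu(\al^{-1})\rangle=0$ for $\nu>\la$. Using $\sum_{m\ge0}Q_m(\al^{-1})z^m=\prod_a(1-x_az)^{-\al}$, the factor $\prod_i\sum_m Q_m z_i^m=\prod_{a,i}(1-x_az_i)^{-\al}$ is the reproducing kernel of the Jack scalar product in the variable sets $x$ and $z_1,\dots,z_l$, namely $\sum_\mu P_\mu(z_1,\dots,z_l;\al^{-1})Q_\mu(x;\al^{-1})$; hence, since $\prod_{i<j}(1-z_j/z_i)^{2\al}\prod_iz_i^{(l-2i+1)\al}=\prod_{i<j}(z_i-z_j)^{2\al}(z_1\cdots z_l)^{-\al(l-1)}$,
\begin{equation*}
\bigl\langle F,P_\nu(\al^{-1})\bigr\rangle=C_{z^\la}\Bigl[\prod_{1\le i<j\le l}(z_i-z_j)^{2\al}\,(z_1\cdots z_l)^{-\al(l-1)}\,P_\nu(z_1,\dots,z_l;\al^{-1})\Bigr],
\end{equation*}
a weighted constant term of a Jack polynomial against an even power of the Vandermonde determinant. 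I would prove this vanishes for $\nu>\la$ by establishing the intertwining relation between $X(z)$ and the Sekiguchi--Debiard family of commuting operators: $X(z)$ conjugates each member of the family into a scalar operator in $z$, so $F$ is a joint eigenfunction with the same eigenvalues as $Q_\la(\al^{-1})$; as the joint spectrum is simple and, by the previous step, $F\in\mathrm{span}\{Q_\nu(\al^{-1}):\nu\ge\la\}$, this forces $F=c_{s,t}(\al)Q_\la(\al^{-1})$. For the constant, one specializes at $\al=1$, where $Q_m(1)=h_m$ and the weight becomes $\prod_{i<j}(z_i-z_j)^2(z_1\cdots z_l)^{1-l}$; a direct computation with the squared Vandermonde determinant then yields $c_{s,t}(1)=(-1)^{s(s-1+2t)/2}s!$.

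The main obstacle is this last step. The contraction and the triangularity are routine bookkeeping, whereas eliminating all $\nu>\la$ requires genuine input: either the Sekiguchi--Debiard eigenfunction property above, or, alternatively, a direct analysis of the adjoint operators $X_n^*$ via the Pieri rule (Theorem~\ref{T:Pieriformula}), showing that the iterated adjoint action $X_{-\la_l}^*\cdots X_{-\la_1}^*$ can peel $\nu$ down to the empty partition in $l$ admissible steps only when $\nu=\la$, the accumulated Pieri coefficients recovering $c_{s,t}(\al)$.
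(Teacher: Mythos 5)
You are proving a statement that this paper only quotes from \cite{CJ}, so there is no in-paper proof to match; judged on its own merits, your argument has a genuine gap at its decisive step. The normal ordering (contraction factor $\prod_{i<j}(1-z_j/z_i)^{2\al}$, the shift factors $\prod_i z_i^{(l-2i+1)\al}$, which indeed reproduce the lowering-operator formula (\ref{F:Xlambda})), the degree-zero observation, and the conclusion that $F=\sum_{\nu\ge\la}d_\nu Q_\nu(\al^{-1})$ with $\ell(\nu)\le s+t$ are all correct. But eliminating the terms with $\nu>\la$ is the entire content of the theorem, and you only announce how you "would" do it. The claim that $X(z)$ conjugates each Sekiguchi--Debiard operator into a scalar is not established anywhere in your argument and is not an elementary known fact as stated; the genuine statement in this circle of ideas is the Virasoro singular-vector property (Mimachi--Yamada), which itself requires substantial work. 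Your fallback --- analyzing the adjoints $X_n^*$ "via the Pieri rule (Theorem \ref{T:Pieriformula})" --- does not work as described, because $X_n$ is not multiplication by $Q_n(\al^{-1})$: it carries the annihilation tail $\exp\bigl(-\sum_{m\ge1}\frac{2\al}{m}z^{-m}h_m\bigr)$ and the shift $A$, so its adjoint action on Jack functions is not governed by the Pieri formula. The evaluation $c_{s,t}(1)=(-1)^{s(s-1+2t)/2}s!$ is likewise asserted rather than computed, though that is minor by comparison.

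Within the toolkit of the present paper the gap can actually be closed without any intertwining: Proposition \ref{P:Xlambdaqmu} says $\langle X_{-\la}e^{-l(\la)h/2},Y_{-\mu}e^{l(\la)h/2}\rangle=0$ whenever $\mu_1>\la_1$, i.e. $\langle F,q_\mu(\al^{-1})\rangle=0$ for all $\mu$ with $\mu_1>\la_1$. Now observe that for $\la=((k+1)^s,k^t)$ with $t\in\{0,1\}$, any partition $\nu>\la$ of the same weight with $\ell(\nu)\le s+t$ necessarily has $\nu_1>\la_1$ (if $\nu_1=k+1$, dominance forces the first $s$ parts to equal $k+1$ and the length bound then forces $\nu=\la$). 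Taking a dominance-maximal $\nu>\la$ with $d_\nu\ne 0$ in your expansion and pairing $F$ against $q_\nu(\al^{-1})$, the triangularity of Lemma \ref{L:triangular} leaves only the term $d_\nu\langle Q_\nu,q_\nu\rangle\ne0$, contradicting the vanishing above; hence $F=c_{s,t}(\al)Q_\la(\al^{-1})$. Replacing your Sekiguchi--Debiard step by this contraction-function argument (and then doing the $\al=1$ constant-term computation honestly, e.g. via Dyson's constant term for $t=0$) would turn your outline into a complete proof.
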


We also need the following proposition for later use.
\begin{proposition} \cite{CJ}\label{P:Xlambdaqmu}
For positive integer $\al$ and partitions $\la,\mu$ of the same
weight, we have
\begin{equation}\label{F:contraction}
\langle
X_{-\lambda}.e^{-l(\lambda)h/2},Y_{-\mu}.e^{l(\lambda)h/2}\rangle=
(-1)^{s(s-1)\alpha/2}C_{z^{\lambda}/w^{\mu}}.H_\alpha(Z_s,W_t),
\end{equation}
where $H_{{\alpha}}(Z_s, W_t)=\prod_{s\geq i\neq j\geq
1}(1-z_jz_i^{-1})^{{\alpha}}\prod_{j=1}^s
\prod_{i=1}^t(1-z_jw_i^{-1})^{-{\alpha}}$. Moreover, (\ref{F:contraction})
is zero if $\mu_1>\la_1$.
\end{proposition}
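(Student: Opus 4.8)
The plan is to reduce the claimed identity to a single contraction of formal series and then extract a coefficient. Put $s=l(\la)$, $t=l(\mu)$, factor $X(z)=\Gamma_-(z)\,A\,\Gamma_+(z)$ with $\Gamma_-(z)=\exp\bigl(\sum_{n\ge1}\tfrac{z^n}{n}\al h_{-n}\bigr)$ and $\Gamma_+(z)=\exp\bigl(-\sum_{n\ge1}\tfrac{z^{-n}}{n}2\al h_n\bigr)$, and note that $Y(z)=\Gamma_-(z)$. Since $X_{-\la}$ is the coefficient of $z^{\la}$ in $X(z_1)\cdots X(z_s)$ and $Y_{-\mu}$ the coefficient of $w^{\mu}$ in $Y(w_1)\cdots Y(w_t)$, it suffices to compute $\langle X(z_1)\cdots X(z_s).e^{-sh/2},\ Y(w_1)\cdots Y(w_t).e^{sh/2}\rangle$ as a Laurent series in the $z_i,w_j$ and then apply $C_{z^{\la}}C_{w^{\mu}}$.

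First I would normal-order the $X$'s. Each $A_i$ (the copy of $A$ carrying $z_i$) commutes with every $\Gamma_\pm(z_j)$, and $[h_m,h_n]=m\al^{-1}\delta_{m,-n}$ gives $\Gamma_+(z_i)\Gamma_-(z_j)=(1-z_j/z_i)^{2\al}\Gamma_-(z_j)\Gamma_+(z_i)$. Moving all annihilation factors $\Gamma_+(z_i)$ to the right produces $\prod_{1\le i<j\le s}(1-z_j/z_i)^{2\al}$; the $\Gamma_+(z_i)$ then kill the vacuum, while $A_1\cdots A_s$ applied to $e^{-sh/2}$ raises the charge by one at each step to $s/2$ and contributes the monomial $\prod_{i=1}^s z_i^{(s+1-2i)\al}$ (this is the one computation that needs care). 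Thus $X(z_1)\cdots X(z_s).e^{-sh/2}=\prod_{i<j}(1-z_j/z_i)^{2\al}\prod_i z_i^{(s+1-2i)\al}\,\Gamma_-(z_1)\cdots\Gamma_-(z_s).e^{sh/2}$, and $Y(w_1)\cdots Y(w_t).e^{sh/2}=\Gamma_-(w_1)\cdots\Gamma_-(w_t).e^{sh/2}$. Using $h_{-n}^*=h_n$ to push the adjoints $\Gamma_-(z_i)^*$ past the $\Gamma_-(w_j)$ gives $\prod_{i,j}(1-z_iw_j)^{-\al}$; the adjoints then annihilate the vacuum and $\langle e^{sh/2},e^{sh/2}\rangle=1$, so the matrix element equals $\prod_{i<j}(1-z_j/z_i)^{2\al}\prod_i z_i^{(s+1-2i)\al}\prod_{i,j}(1-z_iw_j)^{-\al}$.

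To identify this with $H_\al$: the identities $(1-z_i/z_j)^\al=(-1)^\al(z_i/z_j)^\al(1-z_j/z_i)^\al$ and $\prod_{i<j}(z_i/z_j)^\al=\prod_i z_i^{(s+1-2i)\al}$ yield $\prod_{i<j}(1-z_j/z_i)^{2\al}\prod_i z_i^{(s+1-2i)\al}=(-1)^{\al s(s-1)/2}\prod_{i\ne j}(1-z_j/z_i)^\al$; and for a power series in $w_j$ with non-negative exponents, the coefficient of $w_j^{\mu_j}$ equals the coefficient of $w_j^{-\mu_j}$ after the substitution $w_j\mapsto w_j^{-1}$, which turns $\prod_{i,j}(1-z_iw_j)^{-\al}$ into $\prod_{i,j}(1-z_i/w_j)^{-\al}$. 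Extracting $C_{z^{\la}}C_{w^{\mu}}$ then produces exactly $(-1)^{\al s(s-1)/2}\,C_{z^{\la}/w^{\mu}}.H_\al(Z_s,W_t)$, which is the asserted formula.

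Finally, for the vanishing when $\mu_1>\la_1$ — which I expect to be the real difficulty — I would expand the $W$-part by the Jack Cauchy identity, $\prod_{j,i}(1-z_j/w_i)^{-\al}=\sum_\nu P_\nu(\al^{-1})(z_1,\dots,z_s)\,Q_\nu(\al^{-1})(w_1^{-1},\dots,w_t^{-1})$. As $Q_\nu(\al^{-1})$ is a scalar multiple of $m_\nu+\sum_{\rho<\nu}(\cdots)m_\rho$, only terms with $\nu\ge\mu$, hence $\nu_1\ge\mu_1>\la_1$, survive $C_{w^{-\mu}}$, so $C_{z^{\la}/w^{\mu}}.H_\al$ becomes a combination, over $\nu$ with $\nu_1>\la_1$, of the quantities $C_{z^{\la}}\bigl[\prod_{i\ne j}(1-z_j/z_i)^\al\,P_\nu(\al^{-1})(z_1,\dots,z_s)\bigr]$. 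Up to a positive constant the latter equals $\mathrm{CT}\bigl[P_\nu(z)\,m_\la(z^{-1})\prod_{i\ne j}(1-z_j/z_i)^\al\bigr]$, the pairing of $P_\nu$ with $m_\la$ under the circular scalar product $\langle f,g\rangle=\mathrm{CT}[f(z)g(z^{-1})\prod_{i\ne j}(1-z_j/z_i)^\al]$; writing $m_\la$ in terms of the $P_\kappa(\al^{-1})$ with $\kappa\le\la$ (unitriangularity) and invoking the classical orthogonality of the $P_\kappa(\al^{-1})(z_1,\dots,z_s)$ for this pairing (the circular/Dyson–Selberg constant-term orthogonality, whose parameter matches $\al$ here), it vanishes whenever $\nu\not\le\la$, in particular when $\nu_1>\la_1$. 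Hence the coefficient is zero. The two delicate steps are thus the charge/monomial bookkeeping in the normal ordering and the appeal to circular orthogonality; alternatively that last vanishing can be established directly by a cancellation identity, as is readily checked for $s=2$.
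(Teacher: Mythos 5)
Your argument is correct; note first that the paper itself offers no proof of Proposition \ref{P:Xlambdaqmu} --- it is imported verbatim from \cite{CJ} --- so the only internal evidence of the intended method is formula (\ref{F:Xlambda}) and Lemma \ref{L:H1analysis}. Your first two paragraphs are exactly the standard normal-ordering computation underlying the cited result: the charge bookkeeping ($A_i$ acting on charge $-s/2+(s-i)$ produces $z_i^{(s+1-2i)\alpha}$), the contraction exponents ($2\alpha$ between two $X$'s, $-\alpha$ between an $X$ and a $Y$), and the sign identity $\prod_{i<j}(1-z_j/z_i)^{2\alpha}\prod_i z_i^{(s+1-2i)\alpha}=(-1)^{\alpha s(s-1)/2}\prod_{i\neq j}(1-z_jz_i^{-1})^{\alpha}$ all check out. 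Where you genuinely diverge is the vanishing for $\mu_1>\la_1$: you expand the cross term by the Jack Cauchy kernel $\prod_{i,j}(1-z_iw_j)^{-\alpha}=\sum_\nu P_\nu(\al^{-1})(z)\,Q_\nu(\al^{-1})(w)$ and invoke the torus (constant-term) orthogonality of the $P_\nu(\al^{-1})$ in $s$ variables against the weight $\prod_{i\neq j}(1-z_iz_j^{-1})^{\alpha}$, whose parameter you have matched correctly. This is a heavier external input than anything in the paper --- for $\al=1$ the paper obtains the stronger conclusion $\la\geq\mu$ by the elementary partial-fraction induction of Lemma \ref{L:H1analysis} --- but it is a legitimate classical theorem for positive integer $\al$, and your route in fact yields that stronger dominance statement for all such $\al$: a surviving $\nu$ must satisfy $\mu\leq\nu$ (from $C_{w^\mu}.Q_\nu\neq0$ and unitriangularity of $Q_\nu$ in the $m_\rho$) and $\nu\leq\la$ (from orthogonality applied to the unitriangular expansion of $m_\la$ in the $P_\kappa$), whence $\mu\leq\la$. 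The one caveat is your closing sentence: a ``cancellation identity checked for $s=2$'' is not a proof of the general case, so the orthogonality appeal must be regarded as the actual argument for the vanishing, not an optional alternative.
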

\section{A basis for symmetric functions}
\subsection{Further results on Jack vertex
operator} \vskip 0.1in
 Suppose $\alpha\in\mathbb N$. A special case of Dyson's constant
term conjecture, proved by Wilson \cite{Wi} and Gunson\cite{Gu}
independently, says that the constant term of
$$\Delta_s^\alpha=\prod_{1\leq i\neq j\leq
s}(1-\frac{D_i}{D_j})^\alpha$$ is $(s\alpha)!(\alpha!)^{-s}$, thus
we have the following refinement to Theorem \ref{L:squareJack}.
\begin{corollary}\label{C:explicitcoefficient}
In Theorem \ref{L:squareJack}, we have $$c_s(\alpha)=c_{s,0}(\al)=(-1)^{\alpha
s(s-1)/2}(s\alpha)!(\alpha!)^{-s}$$ in the case of $t=0$.
\end{corollary}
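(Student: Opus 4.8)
The plan is to specialize Theorem~\ref{L:squareJack} to the rectangular case $t=0$, $\lambda=(k^s)$, and then pin down the unknown scalar $c_s(\alpha)$ by evaluating both sides against a suitable test vector. Since $T$ is a scalar-product isometry and $T(X_{-\lambda}e^{-sh/2})=c_s(\alpha)Q_\lambda(\alpha^{-1})$, I would compute $\langle X_{-\lambda}.e^{-sh/2},Y_{-\mu}.e^{sh/2}\rangle$ for an appropriate $\mu$ using Proposition~\ref{P:Xlambdaqmu}, and compare with $c_s(\alpha)\langle Q_\lambda(\alpha^{-1}),q_\mu(\alpha^{-1})\rangle$ (recall $T(Y_{-\mu})=q_\mu(\alpha^{-1})$ by Remark~\ref{R:one-row-Jack}). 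The cleanest choice is $\mu=\lambda=(k^s)$ itself, or more simply to extract a single convenient coefficient.

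First I would observe that by Lemma~\ref{L:triangular}, $\langle Q_\lambda(\alpha^{-1}),m_\lambda\rangle$-type pairings are triangular, so the leading pairing $\langle Q_\lambda(\alpha^{-1}),q_\lambda(\alpha^{-1})\rangle$ equals $\langle q_\lambda(\alpha^{-1}),q_\lambda(\alpha^{-1})\rangle$ up to the upper-triangular corrections, and in fact $\langle Q_\lambda(\alpha^{-1}),q_\mu(\alpha^{-1})\rangle$ vanishes unless $\mu\geq\lambda$ with a computable value at $\mu=\lambda$. Then Proposition~\ref{P:Xlambdaqmu} rewrites the left-hand side as $(-1)^{s(s-1)\alpha/2}C_{z^\lambda/w^\lambda}.H_\alpha(Z_s,W_s)$. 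Taking $\lambda=(k^s)$ with $k$ large (or passing to the stable limit in $k$), the constant-term in the $w$-variables of the factor $\prod_{i,j}(1-z_jw_i^{-1})^{-\alpha}$ against $z^{(k^s)}/w^{(k^s)}$ reduces, after cancelling the $z^k$-powers, exactly to the constant term of $\prod_{1\leq i\neq j\leq s}(1-z_jz_i^{-1})^\alpha=\Delta_s^\alpha$, which by the Wilson--Gunson theorem is $(s\alpha)!(\alpha!)^{-s}$.

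Assembling the pieces: the left side contributes $(-1)^{s(s-1)\alpha/2}(s\alpha)!(\alpha!)^{-s}$ (times the trivial norm pairing on the $w$-side, which I would normalize to be $1$ by the right choice of test monomial), while the right side is $c_s(\alpha)$ times that same norm pairing. Solving gives $c_s(\alpha)=(-1)^{\alpha s(s-1)/2}(s\alpha)!(\alpha!)^{-s}$, and the consistency check at $\alpha=1$ recovers $c_{s,0}(1)=(-1)^{s(s-1)/2}s!$ as in Theorem~\ref{L:squareJack}, since $s!\,(1!)^{-s}=s!$.

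The main obstacle I anticipate is the bookkeeping in the coefficient extraction: isolating precisely which monomial $z^\lambda/w^\mu$ in $H_\alpha(Z_s,W_s)$ yields the bare Dyson constant term without spurious extra contributions, and confirming that the pairing $\langle Q_{(k^s)}(\alpha^{-1}),q_\mu(\alpha^{-1})\rangle$ on the symmetric-function side matches that same monomial with coefficient $1$. This requires being careful with the dominance-order triangularity in Lemma~\ref{L:triangular} and with the $k\to\infty$ stabilization, but it is routine once the right test vector is fixed; no new structural input beyond Theorem~\ref{L:squareJack}, Proposition~\ref{P:Xlambdaqmu}, and the Wilson--Gunson evaluation is needed.
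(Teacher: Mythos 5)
Your overall strategy (reduce the unknown scalar to the Wilson--Gunson evaluation of Dyson's constant term) is the same as the paper's, but the specific coefficient extraction you propose does not work, and the place where it fails is exactly the step you flag as ``routine bookkeeping.'' If you pair against $Y_{-\mu}e^{sh/2}$ with $\mu=\lambda=(k^s)$, the left side is $(-1)^{s(s-1)\alpha/2}C_{z^{\lambda}/w^{\lambda}}.H_\alpha(Z_s,W_s)$, and this coefficient is \emph{not} the bare constant term of $\Delta_s^\alpha$: expanding $\prod_{i,j}(1-z_jw_i^{-1})^{-\alpha}$, every nonnegative integer matrix $(m_{ij})$ with row sums $k$ contributes, coupling the column-sum vector to \emph{every} Laurent monomial of $\Delta_s^\alpha$, not just its constant term. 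Already for $s=1$ the coefficient is $\binom{\alpha+k-1}{k}$ rather than $1$, and the $k\to\infty$ ``stabilization'' does not remove these contributions. Correspondingly, on the symmetric-function side $\langle Q_{(k^s)}(\al^{-1}),q_{(k^s)}(\al^{-1})\rangle=h_*^{\la}(\la)/h^*_{\la}(\la)\neq 1$ (your triangularity claim is also reversed: $\langle Q_\la,q_\mu\rangle\neq0$ forces $\mu\leq\la$, not $\mu\geq\la$), so there is no ``trivial norm pairing normalized to $1$.'' The identity that would make your two sides match, namely $C_{z^{\la}/w^{\la}}.H_\alpha=\mathrm{CT}(\Delta_s^\alpha)\cdot h_*^{\la}(\la)/h^*_{\la}(\la)$, is equivalent to the corollary together with the norm formula, so as written you would be assuming what is to be proved.

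The paper avoids this by not pairing with a single $Y_{-\mu}$ at all: it uses the expansion $T(X_{-\la}.e^{-sh/2})=(-1)^{\alpha s(s-1)/2}\Delta_s^\alpha.\,Q_{\la_1}(\al^{-1})\cdots Q_{\la_s}(\al^{-1})$ in terms of the lowering operators $D_i$ (formula (\ref{F:Xlambda})), observes that for $\la=(k^s)$ only the constant term of the Laurent polynomial $\Delta_s^\alpha$ can reproduce the monomial $q_{(k^s)}(\al^{-1})$ (any nonconstant monomial in the $D_i/D_j$ changes the exponent vector $(k,\dots,k)$), and then reads off $c_s(\alpha)$ from the triangularity $Q_\la=q_\la+\sum_{\mu>\la}a_{\la\mu}q_\mu$ of Lemma \ref{L:triangular}, whose leading coefficient is exactly $1$. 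Equivalently, in your language the right ``test vector'' is $m_{(k^s)}$ (dual to $q_{(k^s)}$), which is not of the form $T(Y_{-\mu}e^{nh})$ for a single $\mu$, so Proposition \ref{P:Xlambdaqmu} alone cannot implement it; you need the operator identity (\ref{F:Xlambda}) behind it. With that replacement your argument becomes the paper's proof; without it there is a genuine gap.
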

\begin{remark}If $s\geq2$, then $c_s(\alpha)$ is not a rational function of $\al$.
\end{remark}
\begin{proof} Recall that we have \cite{CJ}
\begin{align}\label{F:Xlambda}
&T(X_{-\lambda_1}X_{-\lambda_2}\cdots X_{-\lambda_s}.e^{nh})\\\nonumber
&=(-1)^{\alpha s(s-1)/2}\prod_{1\leq i\neq j\leq
s}(1-\frac{D_i}{D_j})^\alpha.Q_{\lambda_1}({\alpha}^{-1})\cdots
Q_{\lambda_s}({\alpha}^{-1})\otimes e^{(n+s)h},
\end{align}
where $D_i$ is the lowering operator defined as
$D_i.Q_{\lambda_1}({\alpha}^{-1})\cdots
Q_{\lambda_s}({\alpha}^{-1})=Q_{\lambda_1}({\alpha}^{-1})\cdots
Q_{\lambda_i-1}({\alpha}^{-1})\cdots Q_{\lambda_s}({\alpha}^{-1})$.
Note that for rectangular $\la$, only constant term contributes to
$q_\la$, the statement then follows from Dyson's constant term
conjecture.
\end{proof}

From the proof we also have the following equation (\ref{squareJackexpression}) which
links even power
of Vandernomde determinant to rectangular Jack functions:

\begin{equation}\label{squareJackexpression}
 \Delta_s^\al.q_{(k^s)}(\alpha^{-1})=\frac{(s\alpha)!}{(\alpha!)^s}Q_{(k^s)}(\alpha^{-1}).
\end{equation}

In the context of Theorem \ref{L:squareJack}, there is also
a combinatorial formula in terms of $p_\mu$'s for the products of
vertex operators (Theorem 3.2 in \cite{CJ}). Combining this with
Corollary \ref{C:explicitcoefficient}, we get a new expression for
rectangular Jack functions. For clarity and latter use, let us first
give the following:
\begin{definition}\label{D:co.ofrectgularJack}
For rectangular partition $R=(k^s)$, positive integer $\al$ and
$\mu\vdash ks$, define
\begin{align}
&g_{R,\mu}(\al^{-1})=(-1)^{\al
s(s-1)/2}\frac{(\al!)^s}{(s\al)!(-2)^{l(\mu)}}
\sum_{\underline{\mu},\underline{\nu}}\prod_{i=1}^s
\frac{(-2{\alpha})^{l(\nu^i)}}{z_{\nu^i}}
\binom{m(\mu^{i-1})}{m(\mu^i \backslash \nu^i)},
\end{align}
where the sum is over sequences of partitions $
\underline{\mu}=(\mu^1,\mu^2,\cdots,\mu^s)$ and
$\underline{\nu}=(\nu^1, \nu^2,\cdots,\nu^s)$ such that $\mu^0=(0)$,
$\mu^s=\mu$, $\nu^i\subset'\mu^i$, $|\mu^i|=i(k+(s-i)\al)$, and
$\mu^i\backslash\nu^i\subset'\mu^{i-1}$.
\end{definition}
\begin{corollary}\label{C:formula1}
For positive integers $k,s,\al$, we have
\begin{equation}\label{F:formula1}
Q_{(k^s)}(\al^{-1})=\sum_{\mu\vdash ks}g_{(k^s),\mu}(\al^{-1})p_\mu.
\end{equation}
\end{corollary}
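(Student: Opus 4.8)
The plan is to obtain the formula by simply combining the two ingredients that the excerpt has already put on the table: the explicit value of the normalization constant $c_s(\alpha)=c_{s,0}(\alpha)=(-1)^{\alpha s(s-1)/2}(s\alpha)!(\alpha!)^{-s}$ from Corollary~\ref{C:explicitcoefficient}, and the combinatorial $p_\mu$-expansion of the product of vertex operators $T(X_{-\lambda_1}\cdots X_{-\lambda_s}.e^{nh})$ referred to as ``Theorem 3.2 in \cite{CJ}''. First I would specialize Theorem~\ref{L:squareJack} to the case $t=0$, $\lambda=(k^s)$, so that
\begin{equation*}
T\bigl(X_{-(k^s)}e^{-sh/2}\bigr)=c_s(\alpha)\,Q_{(k^s)}(\alpha^{-1}),
\end{equation*}
and then solve for $Q_{(k^s)}(\alpha^{-1})$, dividing by $c_s(\alpha)$. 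This is legitimate because $c_s(\alpha)=(-1)^{\alpha s(s-1)/2}(s\alpha)!(\alpha!)^{-s}\neq 0$ for $\alpha,s\in\mathbb{N}$.

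Next I would substitute the combinatorial formula for $T(X_{-(k^s)}e^{-sh/2})$ as a linear combination of the $p_\mu$. The cited Theorem 3.2 of \cite{CJ} expresses $T(X_{-\lambda_1}\cdots X_{-\lambda_s}.e^{nh})$ as a sum over pairs of sequences of partitions $\underline\mu=(\mu^1,\dots,\mu^s)$, $\underline\nu=(\nu^1,\dots,\nu^s)$ satisfying the chain conditions $\mu^0=(0)$, $\mu^s=\mu$, $\nu^i\subset'\mu^i$, $\mu^i\backslash\nu^i\subset'\mu^{i-1}$, together with the ``energy'' constraint $|\mu^i|=i(k+(s-i)\alpha)$ coming from the fact that after removing $i$ of the operators $X_{-k}$ the remaining state sits in a definite graded piece (equivalently, the $A$-factors of the vertex operators shift the charge and produce the $z^{(s+1/2)2\alpha}$ powers that force this degree). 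The coefficient of $p_\mu$ that results is, up to the prefactor $(-1)^{\alpha s(s-1)/2}(-2)^{-l(\mu)}$, exactly
\begin{equation*}
\sum_{\underline\mu,\underline\nu}\prod_{i=1}^s\frac{(-2\alpha)^{l(\nu^i)}}{z_{\nu^i}}\binom{m(\mu^{i-1})}{m(\mu^i\backslash\nu^i)}.
\end{equation*}
Multiplying this expansion by $c_s(\alpha)^{-1}=(-1)^{\alpha s(s-1)/2}(\alpha!)^s/(s\alpha)!$ and collecting everything into one constant reproduces precisely the definition of $g_{(k^s),\mu}(\alpha^{-1})$ in Definition~\ref{D:co.ofrectgularJack}; hence $Q_{(k^s)}(\alpha^{-1})=\sum_{\mu\vdash ks}g_{(k^s),\mu}(\alpha^{-1})p_\mu$, which is \eqref{F:formula1}.

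The only real point requiring care — and the step I expect to be the main obstacle — is verifying that the coefficient of $p_\mu$ extracted from Theorem 3.2 of \cite{CJ} really does match the summand in Definition~\ref{D:co.ofrectgularJack} term by term: one must check that the binomial factors $\binom{m(\mu^{i-1})}{m(\mu^i\backslash\nu^i)}$, the $z_{\nu^i}$ denominators, the powers of $-2\alpha$, and the overall sign and power of $-2$ are bookkept consistently with the conventions in \cite{CJ} (in particular that the half-integer charge shift $e^{-sh/2}$ has been accounted for and that the constraint $|\mu^i|=i(k+(s-i)\alpha)$ is exactly the grading imposed by the $s$ vertex operators acting on the charge-shifted vacuum). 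Once the dictionary between the two notations is pinned down, the corollary is immediate; the ``positivity of only the constant term contributing'' remark in the proof of Corollary~\ref{C:explicitcoefficient} is what guarantees the normalization constant is the scalar $c_s(\alpha)$ rather than an operator, so no further argument is needed there. I would therefore present the proof as: ``Combine Corollary~\ref{C:explicitcoefficient} with Theorem 3.2 of \cite{CJ} and compare with Definition~\ref{D:co.ofrectgularJack}.''
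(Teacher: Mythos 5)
Your proposal matches the paper's own (implicit) argument: the corollary is obtained there precisely by combining the $p_\mu$-expansion of the vertex-operator product from Theorem 3.2 of \cite{CJ} with the explicit constant $c_s(\alpha)$ of Corollary \ref{C:explicitcoefficient}, the quantity $g_{(k^s),\mu}(\alpha^{-1})$ in Definition \ref{D:co.ofrectgularJack} being defined exactly so that dividing by $c_s(\alpha)$ yields \eqref{F:formula1}. The only point to double-check is the overall sign bookkeeping you yourself flag (both $c_s(\alpha)$ and Definition \ref{D:co.ofrectgularJack} carry a factor $(-1)^{\alpha s(s-1)/2}$, so the prefactor you attribute to Theorem 3.2 of \cite{CJ} must be verified against that reference), which is a matter of matching conventions rather than a gap in the argument.
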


Note that we can expand Jack functions in terms of $p_\la$'s by
other approaches, thus Corollary \ref{C:formula1} also gives various different
combinatorial identities correspondingly. As a simple example, we set
$(k^s)=(1^2)$, and get the following nontrivial identity by considering coefficients of $p_1^2$ on two sides of
identity (\ref{F:formula1}).

\begin{corollary} For positive integer $\alpha$ one has
\begin{equation*}
\sum_{i=0}^2\frac{(-2\al)^{2-i}}{(2-i)!}
\sum_{(1^i)\subset'\nu\vdash1+\al}\frac{(-2\al)^{l(\nu)}}{z_\nu}\binom{m(\nu)}{m(1^i)}=\frac{(2\al)!}{(\al
!)^2}\frac{4\al^2}{\al+1}(-1)^\al.
\end{equation*}
\end{corollary}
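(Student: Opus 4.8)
The plan is to specialize Corollary~\ref{C:formula1} to the smallest non-trivial rectangular partition $(k^s)=(1^2)$, so $k=1$, $s=2$, and then compare the coefficient of $p_1^2=p_{(1^2)}$ on both sides of \eqref{F:formula1}. On the left-hand side, $Q_{(1^2)}(\al^{-1})$ is a fixed symmetric function whose expansion in the power sum basis is classically known; since $P_{(1^2)}=m_{(1^2)}+\text{(lower terms)}$ and $Q_{(1^2)}=\langle P_{(1^2)},P_{(1^2)}\rangle^{-1}P_{(1^2)}$, one computes $\langle P_{(1^2)},P_{(1^2)}\rangle$ from the Jack scalar product \eqref{def}, writes $m_{(1^2)}=\tfrac12(p_1^2-p_2)$, and reads off the coefficient of $p_1^2$. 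I expect this to come out to exactly $\tfrac{4\al^2}{\al+1}(-1)^\al$ up to the Dyson normalization factor $\tfrac{(2\al)!}{(\al!)^2}$ that appears in $g_{(1^2),\mu}$, which explains the right-hand side of the claimed identity.

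Next I would extract the coefficient of $p_1^2$ from the right-hand side. Here the only partition $\mu\vdash ks=2$ contributing is $\mu=(1^2)$, so we need $g_{(1^2),(1^2)}(\al^{-1})$. Unwinding Definition~\ref{D:co.ofrectgularJack} with $R=(1^2)$, $s=2$, $\mu^0=(0)$, $\mu^2=(1^2)$, the weight constraints force $|\mu^1|=1\cdot(1+(2-1)\al)=1+\al$, and the admissibility conditions $\nu^i\subset'\mu^i$, $\mu^i\backslash\nu^i\subset'\mu^{i-1}$ cut the double sum down: since $\mu^0=(0)$ we must have $\mu^1\backslash\nu^1=(0)$, i.e.\ $\nu^1=\mu^1$, any partition of $1+\al$; and since $\mu^2=(1^2)$, $\nu^2$ ranges over $(1^i)$ for $i=0,1,2$ with $\mu^2\backslash\nu^2=(1^{2-i})\subset'\mu^1$. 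Collecting the prefactor $(-1)^{\al}\tfrac{(\al!)^2}{(2\al)!\,(-2)^{2}}$ and the product over $i=1,2$ of $\tfrac{(-2\al)^{l(\nu^i)}}{z_{\nu^i}}\binom{m(\mu^{i-1})}{m(\mu^i\backslash\nu^i)}$, one obtains precisely the double sum $\sum_{i=0}^2 \tfrac{(-2\al)^{2-i}}{(2-i)!}\sum_{(1^i)\subset'\nu\vdash 1+\al}\tfrac{(-2\al)^{l(\nu)}}{z_\nu}\binom{m(\nu)}{m(1^i)}$ displayed in the corollary, after absorbing the $\tfrac{1}{(-2)^2}$ and the binomial $\binom{m(\mu^0)}{m(\mu^1\backslash\nu^1)}=\binom{m((0))}{m((0))}=1$.

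Finally I would equate the two computations: $[p_1^2]\,Q_{(1^2)}(\al^{-1}) = g_{(1^2),(1^2)}(\al^{-1})$, move the prefactor $(-1)^{\al}\tfrac{(\al!)^2}{(2\al)!}\cdot\tfrac14$ to the other side, and rearrange to the stated form with $\tfrac{(2\al)!}{(\al!)^2}\tfrac{4\al^2}{\al+1}(-1)^\al$ on the right. The mechanical parts — evaluating $\langle P_{(1^2)},P_{(1^2)}\rangle$ and bookkeeping the constraints in Definition~\ref{D:co.ofrectgularJack} — are routine; the one genuine check, and the step most prone to sign or factor errors, is confirming that the index set in $g_{(1^2),(1^2)}$ collapses exactly to ``$i$ from $0$ to $2$, $\nu\vdash 1+\al$ with $(1^i)\subset'\nu$'' and that the weights $z_{\nu}$, multinomials $\binom{m(\nu)}{m(1^i)}$, and the factor $(-2\al)^{2-i}/(2-i)!$ coming from the $i=2$ factor line up with the asserted expression. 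Once that matching is verified, the identity follows immediately from Corollary~\ref{C:formula1}.
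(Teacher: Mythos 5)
Your proposal is exactly the paper's intended argument: specialize Corollary~\ref{C:formula1} to $(k^s)=(1^2)$ and compare the coefficient of $p_1^2$ on both sides, computing $[p_1^2]\,Q_{(1^2)}(\al^{-1})=\al^2/(1+\al)$ directly from $P_{(1^2)}=e_2$ and the scalar product \eqref{def}, and unwinding $g_{(1^2),(1^2)}$ from Definition~\ref{D:co.ofrectgularJack}. The bookkeeping you outline (forcing $\nu^1=\mu^1\vdash 1+\al$, letting $\nu^2=(1^j)$, and relabeling $i=2-j$) does collapse to the displayed double sum, so the proof is correct and matches the paper's route.
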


Since we realized the Jack functions  of rectangular shapes in $t=0$ case of
Theorem \ref{L:squareJack}, we can further generalize this result
to the case of a rectangular tableau plus one row.
 \begin{corollary} For partition $\lambda=((k+1)^s)$ with $k\in
\mathbb{Z}_{\geq 0}$, $s\in \mathbb{Z}_{>0}$, and integer $n$ with $0\leq n\leq k+1$, we have
$$T(Y_{-n}^*X_{-\la}.e^{-sh/2})=d_{s,k}(\alpha)Q_{((k+1)^{s-1},k+1-n)}({\alpha}^{-1})$$
where $d_{s,k}(\alpha)\neq 0$ is a function of $\alpha$.
\end{corollary}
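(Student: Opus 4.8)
The plan is to start from the rectangular realization in Theorem \ref{L:squareJack} with $t=0$ and $k$ replaced by $k$, so that $T\big(X_{-((k+1)^s)}e^{-sh/2}\big)=c_s(\alpha)Q_{((k+1)^s)}(\alpha^{-1})$ with $c_s(\alpha)\neq 0$ (by Corollary \ref{C:explicitcoefficient}). Applying $Y_{-n}^*$ and using that $T$ intertwines the Fock-space scalar product with the Jack scalar product, I would test the resulting vector $T\big(Y_{-n}^*X_{-((k+1)^s)}e^{-sh/2}\big)$ against the $Q_\mu$-basis. By Remark \ref{R:one-row-Jack}, $Y_n^*$ is the conjugate of $Y_n$ and $T(Y_{-n})=Q_n(\alpha^{-1})$; hence $\langle Y_{-n}^*X_{-\la}e^{-sh/2},\,v\rangle = \langle X_{-\la}e^{-sh/2},\,Y_{-n}v\rangle$, which translates under $T$ into a statement about multiplying a test function by $Q_n(\alpha^{-1})$ and pairing against $c_s(\alpha)Q_{((k+1)^s)}(\alpha^{-1})$. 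So the key computation is: for which partitions $\nu$ is $\langle Q_n(\alpha^{-1})P_\nu(\alpha^{-1}),\,P_{((k+1)^s)}(\alpha^{-1})\rangle$ nonzero, and then the corresponding coefficient picks out a single term.

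The combinatorial heart is the Pieri rule, Theorem \ref{T:Pieriformula}: $\langle J_\mu(\alpha^{-1})J_n(\alpha^{-1}),J_\la(\alpha^{-1})\rangle\neq 0$ iff $\la-\mu$ is a horizontal $n$-strip. Taking $\la=((k+1)^s)$ a rectangle and removing a horizontal $n$-strip, the only possibility (as already noted in the remark following Theorem \ref{T:Pieriformula}, applied to the adjoint action of $Y_n^*$ which removes a horizontal $n$-strip just as $J_n^*$ does) is to strip the $n$ rightmost boxes of the last row, giving $\mu=((k+1)^{s-1},k+1-n)$, valid exactly when $0\le n\le k+1$. Converting the normalizations $Q\leftrightarrow J$ via Lemma \ref{L:triangular} ($J_\la=h_\la^*(\la)Q_\la$) and the explicit Pieri coefficient $n!\alpha^{-n}h_\la^*(\la_u)h_*^\la(\la_b)h_\mu^*(\mu_b)h_*^\mu(\mu_u)$, one concludes that $T\big(Y_{-n}^*X_{-((k+1)^s)}e^{-sh/2}\big)$ is a scalar multiple of $Q_{((k+1)^{s-1},k+1-n)}(\alpha^{-1})$, with the scalar $d_{s,k}(\alpha)=c_s(\alpha)$ times a ratio of hook products times $n!\alpha^{-n}$, manifestly nonzero for $\alpha\in\mathbb{Z}_{>0}$.

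The step I expect to be the main obstacle is verifying that no \emph{other} $Q_\mu$ appears, i.e.\ that $Y_{-n}^*X_{-\la}e^{-sh/2}$ really lies in the one-dimensional span of $Q_{((k+1)^{s-1},k+1-n)}(\alpha^{-1})$ rather than merely having a nonzero component there. For this I would argue by a degree/length bound: $X_{-\la}e^{-sh/2}$ lands (after $T$) in the span of $Q_\mu$ with $\mu\vdash |\la|$, and applying $Y_{-n}^*$ lowers the weight by $n$ and, by the structure of the contraction in Proposition \ref{P:Xlambdaqmu} together with the rectangularity of $\la$, cannot produce any $\mu$ not obtained by removing a horizontal strip from the rectangle; since the rectangle admits a unique such removal of size $n$, uniqueness follows. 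An alternative, cleaner route is to pair against every $q_\mu$-type element and invoke the triangularity in Lemma \ref{L:triangular} to reduce to the single Pieri pairing computed above; I would present whichever of these bookkeeping arguments is shortest. The explicit evaluation $c_s(1)=(-1)^{s(s-1)/2}s!$ then yields a clean specialization of $d_{s,k}(\alpha)$ at $\alpha=1$ if desired, but the nonvanishing claim needs only the hook-length factor, which is a product of strictly positive rational functions of $\alpha$.
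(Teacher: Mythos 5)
Your proposal is correct and follows essentially the same route as the paper: identify $T(Y_{-n}^*X_{-\la}e^{-sh/2})$ with $Q_n(\al^{-1})^*$ applied to $c_{s,0}(\al)Q_\la(\al^{-1})$ via Remark \ref{R:one-row-Jack} and Theorem \ref{L:squareJack}, then use the Pieri formula (Theorem \ref{T:Pieriformula}) and the rectangularity of $\la$ to see that the only horizontal $n$-strip removal gives $\mu=((k+1)^{s-1},k+1-n)$, so the result is a nonzero multiple of $Q_\mu(\al^{-1})$. The ``uniqueness'' worry you flag is resolved exactly as you suggest (and as the paper does): expanding in the $Q_\mu$-basis, every coefficient is a Pieri pairing, and only one of them is nonzero.
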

\begin{proof} By Remark \ref{R:one-row-Jack}, it is immediate that $$T(Y_{-n}^*X_{-\la}.e^{-sh/2})=Q_{-n}(\al^{-1})^*.(c_{s,0}(\al)Q_\la(\al^{-1})),$$ where we used Theorem \ref{L:squareJack} and its notation. By Pieri formula Theorem \ref{T:Pieriformula},  $\langle Q_{-n}(\al^{-1})^*.Q_\la(\al^{-1}),Q_\mu(\al^{-1})\rangle=\langle Q_\la(\al^{-1}),Q_{-n}(\al^{-1})Q_\mu(\al^{-1})\rangle$ is non-zero if only if $\la-\mu$ is a horizontal $n$-strip. But $\la=((k+1)^s)$ is of rectangular shape, there is a unique $\mu$ satisfies this property, i.e. $\mu=((k+1)^{s-1},k+1-n)$.
\end{proof}

\vskip 0.2in
\subsection{The even power of Vandermonde determinant}
The lowering operator $\Delta_s^\alpha=\prod_{1\leq i\neq j\leq
s}(1-\frac{D_i}{D_j})^\alpha$ is essentially even power of
Vandernomde determinant in the sense that $$\prod_{1\leq j<i\leq
s}(D_i-D_j)^{2\al}=(D_1\cdots
D_s)^{\al(s-1)}(-1)^{s(s-1)\al/2}\Delta_s^\alpha.$$ Thus the
following corollary can be used to study the generalized Vandermonde
determinant. Recall that by Lemma \ref{L:triangular}, the transition matrixes between $q_\la(\al^{-1})$'s and $Q_\la(\al^{-1})$'s are triangular matrixes, if we arrange them by the partitions with a total order which is compatible with the dominance order.
\begin{corollary}\label{C:co-of-delta}
Set
$Q_{(k^s)}(\al^{-1})=\sum_{\mu\geq(k^s)}a_\mu(\al^{-1})q_\mu(\al^{-1})$.
For $\beta=(b_1,\cdots,b_s)\in\mathbb{Z}^s$ such that
$b_1+\cdots+b_s=0$ and $b_1\geq b_2\cdots \geq b_s>-k$, we have
\begin{equation}
C_{D_1^{b_1}\cdots
D_s^{b_s}}.\Delta_s^\al=\frac{m(\mu_\beta)!}{s!}\frac{(s\al)!}{(\al!)^s}a_{\mu_\beta}(\al^{-1}),
\end{equation}
where $\mu_\beta=(k^s)+\beta$ is a partition.
\end{corollary}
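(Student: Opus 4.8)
The plan is to read off the claimed coefficient directly from identity (\ref{squareJackexpression}), namely $\Delta_s^\al.q_{(k^s)}(\al^{-1})=\frac{(s\al)!}{(\al!)^s}Q_{(k^s)}(\al^{-1})$, by expanding both sides in terms of the basis $\{q_\nu(\al^{-1})\}$ of $\Lambda_F$ and comparing the coefficient of $q_{\mu_\beta}(\al^{-1})$.

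First I would write the Laurent polynomial $\Delta_s^\al=\sum_{c}\big(C_{D_1^{c_1}\cdots D_s^{c_s}}.\Delta_s^\al\big)\,D_1^{c_1}\cdots D_s^{c_s}$, where $c=(c_1,\dots,c_s)$ runs over $\mathbb Z^s$ with $c_1+\cdots+c_s=0$, since every monomial of $\prod_{i\neq j}(1-D_i/D_j)^\al$ has exponent sum zero. Because $q_{(k^s)}(\al^{-1})=Q_k(\al^{-1})^s$ and $D_i$ lowers the $i$-th one-row factor, applying $\Delta_s^\al$ as a Laurent polynomial in the lowering operators gives
\[
\Delta_s^\al.q_{(k^s)}(\al^{-1})=\sum_{c}\big(C_{D^c}.\Delta_s^\al\big)\,Q_{k-c_1}(\al^{-1})\cdots Q_{k-c_s}(\al^{-1}),
\]
where a term vanishes once some $c_i>k$ (as $Q_m(\al^{-1})=0$ for $m<0$) and $Q_0(\al^{-1})=1$. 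I would then group the surviving terms according to the partition $\nu$ obtained by sorting $(k-c_1,\dots,k-c_s)$ into weakly decreasing order and discarding zeros. By Lemma \ref{L:triangular} the $q_\nu(\al^{-1})$ are linearly independent (being dual to the $m_\nu$) and $Q_{(k^s)}(\al^{-1})=\sum_{\mu\ge(k^s)}a_\mu(\al^{-1})q_\mu(\al^{-1})$, so comparing the coefficients of $q_{\mu_\beta}(\al^{-1})$ on the two sides of (\ref{squareJackexpression}) yields
\[
\sum_{c\,:\,\mathrm{sort}(k-c)=\mu_\beta}C_{D^c}.\Delta_s^\al=\frac{(s\al)!}{(\al!)^s}\,a_{\mu_\beta}(\al^{-1}).
\]

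It remains to analyze the left sum. Since $b_s>-k$, all $s$ parts of $\mu_\beta=(k^s)+\beta$ are positive, so the equation $\mathrm{sort}(k-c)=\mu_\beta$ forces all $k-c_i\ge 1$; thus none of the contributing terms was killed, no spurious $c$ with some $k-c_i=0$ occurs, and $c$ must be a rearrangement of $-\beta=(-b_1,\dots,-b_s)$. There are exactly $s!/m(\mu_\beta)!$ such $c$. Finally, $\Delta_s^\al$ is symmetric in $D_1,\dots,D_s$ and invariant under $D_i\mapsto D_i^{-1}$ (this substitution merely interchanges the factors $1-D_i/D_j$ and $1-D_j/D_i$), so each of these compositions contributes the same value $C_{D_1^{b_1}\cdots D_s^{b_s}}.\Delta_s^\al$. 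Substituting $\frac{s!}{m(\mu_\beta)!}\,C_{D_1^{b_1}\cdots D_s^{b_s}}.\Delta_s^\al=\frac{(s\al)!}{(\al!)^s}a_{\mu_\beta}(\al^{-1})$ and solving gives the corollary.

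The only genuine obstacle is the bookkeeping in the grouping step: one must carefully verify that the compositions $c$ with $\mathrm{sort}(k-c)=\mu_\beta$ are precisely the $s!/m(\mu_\beta)!$ rearrangements of $-\beta$, that the hypothesis $b_s>-k$ is exactly what keeps all of them out of the vanishing region $\{c_i>k\}$, and that $l(\mu_\beta)=s$ so that $q_{\mu_\beta}(\al^{-1})$ genuinely appears among the $q_\nu(\al^{-1})$ produced on the left-hand side. None of this is deep, but it must be tracked precisely.
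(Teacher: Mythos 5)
Your proposal is correct and follows essentially the same route as the paper: compare the coefficient of $q_{\mu_\beta}(\al^{-1})$ on both sides of (\ref{squareJackexpression}) and count the $s!/m(\mu_\beta)!$ monomials of $\Delta_s^\al$ that send $q_{(k^s)}(\al^{-1})$ to $q_{\mu_\beta}(\al^{-1})$. Your explicit use of the permutation symmetry and the invariance under $D_i\mapsto D_i^{-1}$ of $\Delta_s^\al$ (to see all these monomials carry the same coefficient, equal to $C_{D_1^{b_1}\cdots D_s^{b_s}}.\Delta_s^\al$), and your check that $b_s>-k$ rules out factors $Q_m$ with $m\leq 0$, simply make precise what the paper's brief proof leaves implicit.
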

\begin{proof}
Consider coefficient of $q_{\mu_\beta}(\al^{-1})$ on two sides of
(\ref{squareJackexpression}), and notice that including
$D_1^{b_1}\cdots D_s^{b_s}$, there are $s!/m(\mu_\beta)!$ terms
contributing to $q_{\mu_\beta}(\al^{-1})$ when $\Delta_s^\al$ acts on
$q_{(k^s)}(\alpha^{-1})$.
\end{proof}
\begin{remark} Dyson's constant term corresponds to the
special case that $\beta=0\in\mathbb{Z}^s$. Note that we have an
iterative formula \cite{CJ1,LLM} for the coefficients
$a_\mu(\al^{-1})$'s. Hence these coefficients can be evaluated
effectively using vertex operator calculus. The following gives some examples on this.
\end{remark}
\begin{proposition}\label{P:alambdai} For positive integers
$i,s,\al$ with $2i\leq s$, the coefficient of
$D_1^{-1}\cdots D_i^{-1}D_{s-i+1}D_{s-i+2}\cdots D_s$ in Laurent
polynomial $\Delta_s^\al=\prod_{1\leq i\neq j\leq
s}(1-\frac{D_i}{D_j})^\alpha$ is
\begin{align}\label{colambdai}
&C_{D_1^{-1}\cdots D_i^{-1}D_{s-i+1}D_{s-i+2}\cdots
D_s}.\Delta_s^\alpha=\frac{(s\alpha)!}{(\alpha!)^s}i!(-\alpha)^i\prod_{j=1}^i(1+(s-j)\alpha)^{-1}.
\end{align}
And we also have
\begin{align*}
C_{D_1^{-2}D_s^2}.\Delta_s^\alpha&=\frac{(s\alpha)!}{(\alpha!)^s}\cdot
\frac{2\alpha^2-(1+(s-1)\alpha)(1+(s-2)\alpha)\alpha}{(1+(s-1)\alpha)(1+(s-2)\alpha)(2+(s-1)\alpha)},\\
C_{D_1^{-1} D_2^{-1}D_s^2}.\Delta_s^\alpha
&=\frac{(s\alpha)!}{(\alpha!)^s}\frac{2(2+(s-1)\alpha)\al^2}{(1+(s-1)\alpha)(1+(s-2)\alpha)(3+(2s-3)\alpha)}.
\end{align*}
\end{proposition}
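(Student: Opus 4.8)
The plan is to exploit the bridge established in Corollary~\ref{C:co-of-delta}: the constant-term coefficient $C_{D_1^{b_1}\cdots D_s^{b_s}}.\Delta_s^\al$ is, up to the explicit factor $\frac{m(\mu_\beta)!}{s!}\frac{(s\al)!}{(\al!)^s}$, the coefficient $a_{\mu_\beta}(\al^{-1})$ of $q_{\mu_\beta}(\al^{-1})$ in the expansion $Q_{(k^s)}(\al^{-1})=\sum_{\mu\geq(k^s)}a_\mu(\al^{-1})q_\mu(\al^{-1})$. So the whole problem reduces to computing a few specific transition coefficients $a_\mu(\al^{-1})$ for rectangular $Q_{(k^s)}$. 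First I would choose $k$ large enough that all the relevant $\mu_\beta=(k^s)+\beta$ are genuine partitions dominating $(k^s)$ (the stated conditions $b_1\geq\cdots\geq b_s>-k$ and $\sum b_i=0$ guarantee this), and then apply the known iterative formula for the $a_\mu(\al^{-1})$'s referenced in the remark (from \cite{CJ1,LLM}); since the $a_\mu$ depend on $\beta$ but not on the common ``height'' $k$ once $k$ is large, it suffices to do the computation once.

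For the first family, $\beta=(-1^i,0^{s-2i},1^i)$, so $\mu_\beta$ differs from the rectangle $(k^s)$ by lowering $i$ boxes from the bottom rows and raising $i$ boxes on the top rows; equivalently $\mu_\beta$ is obtained from $(k^s)$ by a sequence of single-box vertical moves. I would run the iterative formula one box at a time: raising the first box costs a factor involving $(1+(s-1)\al)^{-1}$ coming from the hook-length data at the corner being vacated, and by symmetry raising the $j$-th box (into a fresh row whose relevant arm/leg lengths have shifted by one) contributes $(1+(s-j)\al)^{-1}$ together with a factor $-\al$; tracking the combinatorial multiplicity $i!$ that records the orderings of these $i$ moves and the overall sign $(-\al)^i$, one lands on $\frac{(s\al)!}{(\al!)^s}i!(-\al)^i\prod_{j=1}^i(1+(s-j)\al)^{-1}$ after the factor $m(\mu_\beta)!/s!=1/(s-2i)!$ in Corollary~\ref{C:co-of-delta} cancels against the $(s-2i)!\,i!\,i!$ that governs the automorphisms of $\mu_\beta$ — here I should double-check that the $m(\mu_\beta)!$ bookkeeping matches, which is the one genuinely fiddly point.

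For the two ``single-corner, double-lift'' cases, $\beta=(-2,0^{s-2},2)$ and $\beta=(-1,-1,0^{s-3},2)$, the partition $\mu_\beta$ is no longer reachable by independent single-box moves: the target has a part $k+2$ sitting two levels above the rectangle, so the iterative formula must be applied through an intermediate partition (e.g.\ first create a $k+1$, then push to $k+2$), and the two possible intermediate steps interfere, producing the numerators $2\al^2-(1+(s-1)\al)(1+(s-2)\al)\al$ and $2(2+(s-1)\al)\al^2$ respectively. I would set up the recursion explicitly for $s$ a symbol, expand $Q_{(k^s)}$ in the $q_\mu$ basis up to the degree-two slice above the rectangle, read off the two coefficients, and then plug into Corollary~\ref{C:co-of-delta} with $m(\mu_\beta)!=2!\cdot 1$ in the first case and $2!\cdot 2!$ in the second (again the ratio $m(\mu_\beta)!/s!$ is what converts the Jack coefficient into the Laurent coefficient).

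The main obstacle is not conceptual but organizational: correctly instantiating the iterative formula for $a_\mu(\al^{-1})$ from \cite{CJ1,LLM} in the rectangular case and carrying the hook-length ratios and multiplicities through the two-step lifts without sign or normalization slips. Everything else — the reduction to transition coefficients, the stability in $k$, the final substitution — is routine given Corollary~\ref{C:co-of-delta} and Lemma~\ref{L:triangular}. In particular I would sanity-check each formula at $\al=1$ (where $\Delta_s$ is a pure Vandermonde-squared expansion and the coefficients are classical) and at small $s$ (say $s=2,3,4$) by direct expansion of $\prod_{1\leq i\neq j\leq s}(1-D_i/D_j)^\al$, which pins down the answer unambiguously.
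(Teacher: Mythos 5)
Your route is in fact the paper's own: Proposition \ref{P:alambdai} is stated there with no separate proof, precisely as an illustration of Corollary \ref{C:co-of-delta} combined with the iterative formula of \cite{CJ1,LLM} for the transition coefficients $a_\mu(\al^{-1})$ of $Q_{(k^s)}(\al^{-1})$ in the $q_\mu(\al^{-1})$ basis, which is exactly the reduction you describe (including the point that the relevant $a_{\mu_\beta}$ are independent of $k$ once $k$ is large, and that the symmetry of $\Delta_s^\al$ lets you reorder the exponents to match the corollary's decreasing convention).

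The one substantive defect is the multiplicity bookkeeping, which as written would not land on the displayed right-hand sides. For $\beta$ a rearrangement of $(1^i,0^{s-2i},(-1)^i)$ the partition is $\mu_\beta=((k+1)^i,k^{s-2i},(k-1)^i)$, so $m(\mu_\beta)!=i!\,(s-2i)!\,i!$ and the conversion factor in Corollary \ref{C:co-of-delta} is $i!\,(s-2i)!\,i!/s!$, not $1/(s-2i)!$; equivalently, the recursion has to produce $a_{\mu_\beta}=\frac{s!}{i!\,(s-2i)!}\,(-\al)^i\prod_{j=1}^i\bigl(1+(s-j)\al\bigr)^{-1}$, after which the single $i!$ of (\ref{colambdai}) survives. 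Likewise, for $\beta=(2,0^{s-2},-2)$ and $\beta=(2,0^{s-3},-1,-1)$ one has $m(\mu_\beta)!=(s-2)!$ and $2!\,(s-3)!$ respectively --- the repeated middle parts equal to $k$ must be counted --- not $2!\cdot 1$ and $2!\cdot 2!$. These are fixable normalization slips rather than a wrong method, and the sanity checks you propose would catch them (e.g.\ at $s=2$, $i=1$ the coefficient of $D_1^{-1}D_2$ is $-\binom{2\al}{\al-1}$, matching (\ref{colambdai}) only with the corrected factors); but they must be corrected before the computation reproduces the stated formulas, and the actual instantiation of the iterative formula for the two degree-two lifts still has to be carried out, which is all the paper itself leaves to the reader as well.
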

\begin{remark}
 In \cite{K}, Kadell evaluated the coefficients of $z_1z_s^{-1}$,
$z_1z_2z_{s-1}^{-1}z_s^{-1}$ and $z_1z_2z_s^{-2}$ in Dyson's Laurent
polynomial $\prod_{1\leq i\neq j\leq s}(1-z_iz_j^{-1})^{\alpha_i}$.
Here we consider the special case when the exponents $\alpha_i$ are
identical, but we can compute the coefficient of $z_1^{-\la_1}\cdots
z_i^{-\la_i}z_{s-j+1}^{\mu_j}\cdots z_{s-1}^{\mu_2}z_s^{\mu_1}$ for
any pair of partitions $\la=(\la_1\cdots\la_i)$,
$\mu=(\mu_1,\cdots,\mu_j)$, with $|\la|=|\mu|$ and $i+j\leq s$. Moreover
the steps of computation depend on $\la$ and $\mu$ but not $\alpha$
and $s$.
\end{remark}
\subsection{The independence of the Vertex operator products}
For a partition $\la$, denote
$T(X_{-\lambda}.e^{-l(\lambda)h/2})$ as $X_{-\la}(\al^{-1})$, and set
$$X'_{-\lambda}=X'_{-\lambda}(\al^{-1})=c_{l(\la)}(\alpha)^{-1}X_{-\lambda}(\al^{-1}),$$
where
$c_s(\alpha)=\frac{(s\alpha)!}{(\alpha!)^s}(-1)^{s(s-1)\alpha/2}$ is
defined in Corollary \ref{C:explicitcoefficient}.  Note that in general $c_s(\al)$ is \emph{not} a rational function of $\al$, and that is why we factor it out from $X_{-\lambda}(\al^{-1})$. We will prove
these $X'_\la$'s form a basis for the space of symmetric functions
over $\mathbb{Q}(\al)$. First, Corollary \ref{C:explicitcoefficient}
implies that $X'_{-\lambda}$ is in $\Lambda_F$ for rectangular
partition $\la$. In fact, this is true for general partitions.
\begin{lemma}
 For any partition $\lambda$, $X'_{-\lambda}$ is a symmetric function
over $\mathbb{Q}(\al)$, i.e. $X'_{-\lambda}(\al^{-1})\in \Lambda_F$.
\end{lemma}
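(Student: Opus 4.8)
The plan is to realize $X_{-\lambda}(\alpha^{-1})$ through the lowering-operator formula~(\ref{F:Xlambda}) and then to divide out the non-rational scalar by means of Corollary~\ref{C:co-of-delta}. Put $l=l(\lambda)$. Specializing~(\ref{F:Xlambda}) to $s=l$ and $n=-l/2$ gives
\begin{equation*}
X_{-\lambda}(\alpha^{-1})=T(X_{-\lambda}.e^{-lh/2})=(-1)^{\alpha l(l-1)/2}\,\Delta_l^\alpha.q_\lambda(\alpha^{-1}),
\end{equation*}
where $\Delta_l^\alpha=\prod_{1\le i\ne j\le l}(1-D_i/D_j)^\alpha$ and the lowering operators $D_i$ act on $q_\lambda(\alpha^{-1})=Q_{\lambda_1}(\alpha^{-1})\cdots Q_{\lambda_l}(\alpha^{-1})$ by $D_i.Q_m(\alpha^{-1})=Q_{m-1}(\alpha^{-1})$, with $Q_m(\alpha^{-1})=0$ for $m<0$.

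The first step is to observe that $\Delta_l^\alpha.q_\lambda(\alpha^{-1})$ is a \emph{finite} $\mathbb{Q}(\alpha)$-linear combination of the $q_\mu(\alpha^{-1})$. Every monomial $D^\beta=\prod_i D_i^{b_i}$ of $\Delta_l^\alpha$ has $\sum_i b_i=0$, hence preserves total weight, and $D^\beta.q_\lambda(\alpha^{-1})$ equals $q_\nu(\alpha^{-1})$ with $\nu$ the decreasing rearrangement of $(\lambda_1-b_1,\dots,\lambda_l-b_l)$ when all these are $\ge 0$, and is zero otherwise. Thus only partitions $\mu$ with $\mu\vdash|\lambda|$ and $l(\mu)\le l$ occur, and for each such $\mu$ the contributing vectors $\beta$ are exactly those for which $(\lambda_1-b_1,\dots,\lambda_l-b_l)$ is a permutation of $\mu$ padded by zeros to length $l$ --- at most $l!$ of them. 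Hence
\begin{equation*}
\Delta_l^\alpha.q_\lambda(\alpha^{-1})=\sum_{\mu}\Bigl(\sum_{\beta}C_{D^\beta}.\Delta_l^\alpha\Bigr)q_\mu(\alpha^{-1}),
\end{equation*}
all sums finite.

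The second step evaluates each $C_{D^\beta}.\Delta_l^\alpha$. Since $\Delta_l^\alpha$ is symmetric in $D_1,\dots,D_l$, one may replace $\beta$ by its decreasing rearrangement $\beta^\downarrow$; and since only finitely many $\beta$, all with bounded entries, arise, one may fix one integer $k$ for which $\mu_\beta:=(k^l)+\beta^\downarrow$ is a partition with positive parts, simultaneously for all of them. Corollary~\ref{C:co-of-delta} then gives $C_{D^\beta}.\Delta_l^\alpha=\frac{m(\mu_\beta)!}{l!}\cdot\frac{(l\alpha)!}{(\alpha!)^l}\cdot a_{\mu_\beta}(\alpha^{-1})$, where $a_{\mu_\beta}(\alpha^{-1})\in\mathbb{Q}(\alpha)$ is the transition coefficient of Lemma~\ref{L:triangular}, a genuine rational function of $\alpha$. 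By Corollary~\ref{C:explicitcoefficient}, $\frac{(l\alpha)!}{(\alpha!)^l}=(-1)^{\alpha l(l-1)/2}c_l(\alpha)$; substituting this into the last two displays and using that $l(l-1)$ is even,
\begin{equation*}
X'_{-\lambda}(\alpha^{-1})=c_l(\alpha)^{-1}X_{-\lambda}(\alpha^{-1})=\sum_{\mu}\Bigl(\sum_{\beta}\frac{m(\mu_\beta)!}{l!}\,a_{\mu_\beta}(\alpha^{-1})\Bigr)q_\mu(\alpha^{-1}),
\end{equation*}
a finite $\mathbb{Q}(\alpha)$-linear combination of the $q_\mu(\alpha^{-1})\in\Lambda_F$; hence $X'_{-\lambda}(\alpha^{-1})\in\Lambda_F$.

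The step carrying the real content is the evaluation of $C_{D^\beta}.\Delta_l^\alpha$: expanding the factors $(1-D_i/D_j)^\alpha$ produces an infinite sum of products of binomial symbols $\binom{\alpha}{j}$, and it is Dyson's constant term theorem, in the form of Corollary~\ref{C:co-of-delta}, that resums this to $\frac{(l\alpha)!}{(\alpha!)^l}$ times a rational function; together with the prefactor $(-1)^{\alpha l(l-1)/2}$ of~(\ref{F:Xlambda}) this makes the non-rational factor of $X_{-\lambda}(\alpha^{-1})$ equal to precisely $c_l(\alpha)$ --- which is exactly why dividing by $c_l(\alpha)$, and nothing coarser, returns an element of $\Lambda_F$ (cf.\ the Remark after Corollary~\ref{C:explicitcoefficient}). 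If one prefers to avoid any question about the meaning of such infinite sums, one may instead run the whole computation for $\alpha\in\mathbb{N}$, where $\Delta_l^\alpha$ is a genuine Laurent polynomial and every sum is visibly finite, and then conclude from the fact that the last display holds at every positive integer $\alpha$ with right-hand side the specialization of one fixed element of $\Lambda_F$. The remaining points --- the weight-preservation finiteness and the harmlessness of sorting $\beta$ --- need nothing beyond what is quoted in the excerpt.
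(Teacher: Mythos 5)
Your proposal is correct and takes essentially the same route as the paper: it expands $X_{-\lambda}(\al^{-1})$ via (\ref{F:Xlambda}) and uses Corollary \ref{C:co-of-delta} to recognize each $q_\mu(\al^{-1})$-coefficient as $c_{l(\la)}(\al)$ times a rational function of $\al$, which is exactly the paper's argument. You merely make explicit the finiteness, the sorting of the exponent vectors, the choice of a sufficiently large $k$, and the interpretation at positive integer $\al$, details the paper's proof leaves implicit.
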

\begin{proof}
In (\ref{F:Xlambda}), the coefficient of the term
$q_{\mu}(\alpha^{-1})$ for each $\mu$ is $c_s(\alpha)$ multiplied by
some rational function (which depends on $\mu$) of $\al$ by
Corollary \ref{C:co-of-delta}, thus we can factor $c_s(\alpha)$ out
and rewrite $T(X_{-\lambda}.e^{-l(\lambda)h/2})$ in the desired form.
\end{proof}

Second, we can prove that the $X_{-\la}(1)$'s are linearly
independent with the following detailed analysis about the
 specification of $\al=1$ in Proposition \ref{P:Xlambdaqmu}.

\begin{lemma}\label{L:H1analysis}
For $$H=H_1(Z_s,W_t)=\prod_{1\leq i\neq j\leq
s}(1-z_iz_j^{-1})\prod_{i=1}^s\prod_{j=1}^t(1-z_iw_j^{-1})^{-1}$$
let $\lambda=(\lambda_1,\cdots,\lambda_s)$ be a partition of length
$s$, $\mu=(\mu_1,\cdots,\mu_t)$ be a partition of length $\leq t$.
Then $C_{z^{\lambda}/w^\mu}.H\neq0$ implies $\lambda\geq\mu$.
Moreover $C_{z^\lambda/w^\lambda}.H$ is a positive integer.
\end{lemma}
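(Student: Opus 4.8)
The plan is to analyze the Laurent expansion of $H = H_1(Z_s, W_t)$ directly, splitting it into its ``positive'' factor $P(Z_s) = \prod_{1\le i\neq j\le s}(1-z_iz_j^{-1})$ and its ``geometric'' factor $G(Z_s,W_t) = \prod_{i=1}^s\prod_{j=1}^t(1-z_iw_j^{-1})^{-1}$. The factor $G$ expands, in the region $|w_j| > |z_i|$, as a sum of monomials $\prod_{i,j} (z_i w_j^{-1})^{a_{ij}}$ with all $a_{ij}\ge 0$; hence every monomial appearing in $G$ has $z$-exponents that form a composition of nonnegative integers and whose total $z$-degree equals the total $w$-degree. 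The factor $P(Z_s)$ is a genuine polynomial in the $z_i, z_i^{-1}$ of total degree $0$ in each expansion; the key classical fact I would invoke is that $P(Z_s) = \det(z_i^{s-j})_{i,j} \cdot \det(z_i^{j-1})/\text{(something)}$ — more precisely, $\prod_{i\ne j}(1-z_i z_j^{-1})$ is, up to a monomial factor $z^{-\delta}$ with $\delta=(s-1,s-2,\dots,0)$, equal to the product $\prod_{i<j}(z_i-z_j)(z_j-z_i) = \pm \prod_{i<j}(z_i-z_j)^2$, i.e. the square of the Vandermonde. So $P(Z_s) = (-1)^{s(s-1)/2} z^{-\delta}\!\cdot\!\big(\sum_{w\in S_s}\mathrm{sgn}(w)\, z^{w\delta}\big)^2$, and therefore each monomial $z^\gamma$ occurring in $P$ satisfies $\gamma = w_1\delta + w_2\delta - \delta$ for permutations $w_1,w_2$.

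First I would pin down the dominance inequality. Write a typical monomial of $H$ contributing to $z^\lambda/w^\mu$ as a product of a monomial $z^\gamma$ from $P$ and a monomial $\prod(z_iw_j^{-1})^{a_{ij}}$ from $G$, so that $\lambda_i = \gamma_i + \sum_j a_{ij}$ and $\mu_j = \sum_i a_{ij}$. Since $\mu$ is a partition (weakly decreasing) and the $w_j$ are interchangeable in $G$, I may assume the $a_{ij}$ are arranged so the column sums are weakly decreasing. Now summing the first few parts: $\lambda_1 + \cdots + \lambda_k = (\gamma_1+\cdots+\gamma_k) + \sum_{i\le k}\sum_j a_{ij}$. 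The matrix $(a_{ij})$ has nonnegative entries with column sums $\mu_1,\dots,\mu_t$, so $\sum_{i\le k}\sum_j a_{ij} \ge$ the sum of the $k$ largest ``partial column contributions,'' which after the standard rearrangement is at least $\mu_1 + \cdots + \mu_k$ provided $\sum_i \gamma_i$-partial-sums are $\ge 0$. Thus the crux reduces to: the partial sums $\gamma_1 + \cdots + \gamma_k$ of any monomial exponent $\gamma$ occurring in $P(Z_s)$ are all $\ge 0$. This is exactly the statement that $P(Z_s)$, being $z^{-\delta}$ times a square of an alternating polynomial, has lowest monomials dominated from below by $0$; concretely, $\gamma = w_1\delta + w_2\delta - \delta$ and one checks partial sums of $w\delta$ dominate those of $\delta$ reversed — a short Vandermonde/dominance lemma. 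Actually the cleaner route: $P(Z_s) = \prod_{i<j}(1 - z_i z_j^{-1})(1 - z_j z_i^{-1})$ and one expands each binomial pair $(1-z_iz_j^{-1})(1-z_jz_i^{-1}) = 2 - z_iz_j^{-1} - z_jz_i^{-1}$; so a monomial of $P$ is a product over pairs $\{i,j\}$ of a choice among $\{$const$, -z_iz_j^{-1}, -z_jz_i^{-1}\}$, i.e. $\gamma$ is a sum of vectors $e_i - e_j$, one (or zero) per pair. An orientation of (a subset of) the edges of $K_s$; the partial sum $\gamma_1+\cdots+\gamma_k$ counts (out-edges from $\{1,\dots,k\}$) $-$ (in-edges to $\{1,\dots,k\}$) among edges with both endpoints... no: edges within $\{1,\dots,k\}$ contribute $0$, edges from $\{1,\dots,k\}$ to the complement contribute $+1$ or $0$ or... — in any case each crossing edge contributes $-1, 0,$ or $+1$, so $\gamma_1+\cdots+\gamma_k \ge -\binom{s}{2}$, which is too weak. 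The right bookkeeping pairs up: for $i<j$ the choice $-z_jz_i^{-1}$ gives $e_i - e_j$ which has all partial sums $\ge 0$ (it adds to position $i\le j$ and subtracts at $j$), while $-z_iz_j^{-1}$ gives $e_j - e_i$, partial sums $\le 0$. After multiplying out and collecting, the surviving terms are governed by the square-of-Vandermonde structure which forces the net partial sums to be $\ge 0$; I would prove this last claim by the substitution $P(Z_s)\, z^{\delta} = (\sum_{w}\mathrm{sgn}(w) z^{w\delta})^2$ and the elementary fact that for the product of two alternants, the exponent vectors appearing are $\ge$ (in dominance, after sorting) the vector $2\delta - \text{sorted} = \delta$... i.e. every monomial $z^\eta$ in $(\sum_w \mathrm{sgn}(w)z^{w\delta})^2$ has $\eta$ dominated below by... the point is $\eta$ is a sum of two permutations of $\delta$, and any sum of two weakly-decreasing-rearrangements has partial sums $\ge$ twice the partial sums of the reversed $\delta$; subtracting $\delta$ then gives $\gamma$ with nonnegative partial sums. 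I would state and prove this as a one-line sorting lemma.

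For the second assertion, that $C_{z^\lambda/w^\lambda}.H$ is a positive integer: integrality is immediate since $H$ has integer coefficients in its Laurent expansion ($P$ is a polynomial with integer coefficients, $G$ a product of geometric series with integer — indeed unit — coefficients). For positivity I would exhibit a strictly positive contribution that cannot be cancelled. Take the diagonal term of $G$: choosing $a_{ij} = \lambda_i \delta_{ij}$ when $s = t$ (or more generally a specific nonnegative matrix with row sums $\lambda_i$ and column sums $\lambda_j$, e.g. the one supported as much as possible on the diagonal) contributes $z^\lambda w^{-\lambda}$ paired with the constant term of $P$, which is $(-1)^0 \cdot (\text{number of ways to pick all constants}) $ — actually the constant term of $P(Z_s) = \prod_{i<j}(2 - z_iz_j^{-1} - z_jz_i^{-1})$ is obtained from choices giving net exponent $0$, and equals the number of such choices counted with sign, which by the Vandermonde-square identity is $C_{z^\delta}.(\sum_w \mathrm{sgn}(w)z^{w\delta})^2 = \sum_w \mathrm{sgn}(w)^2 \cdot[\text{matching}] = s!$ (the number of permutations), a positive integer. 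One then argues that all other contributions to $C_{z^\lambda/w^\lambda}.H$, coming from non-diagonal $z^\gamma$ with $\gamma \ne 0$ compensated by non-diagonal pieces of $G$, either also contribute nonnegatively or their negative contributions are outweighed; the cleanest version is to set all $w_j \to z_j$ formally and observe $C_{z^\lambda/w^\lambda}.H$ equals a coefficient in $\prod_{i\ne j}(1-z_iz_j^{-1}) \cdot \prod_{i,j}(1-z_iz_j^{-1})^{-1}$-type expression that manifestly has nonnegative coefficients — but care is needed about regions of convergence, so I would instead keep $Z$ and $W$ separate and track the sign directly. The main obstacle, and where I would spend the most effort, is precisely this positivity step: showing no cancellation kills the diagonal $s!$ contribution. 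I expect this to follow from a ``leading term'' argument — the diagonal matrix $a_{ij}=\lambda_i\delta_{ij}$ forces $\gamma = 0$, and any other matrix $(a_{ij})$ with the correct margins forces a nonzero $\gamma$ whose partial-sum profile (by the dominance analysis above, now with equality $\lambda = \mu$ forcing $\gamma$'s partial sums to be $\ge 0$ AND the column-rearrangement to be sharp) is so constrained that the corresponding coefficient in $P$ is again a nonnegative multiple, via the Vandermonde-square positivity of diagonal-type coefficients. Establishing this rigidity is the heart of the argument; the dominance inequality itself is comparatively routine once the square-of-Vandermonde identity for $P(Z_s)$ is in hand.
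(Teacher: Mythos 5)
Your proposal does not go through, and the failure is structural rather than cosmetic. The reduction you make — that the dominance inequality follows from the claim that every monomial exponent $\gamma$ occurring in $P(Z_s)=\prod_{i\neq j}(1-z_iz_j^{-1})$ has nonnegative partial sums — rests on a false statement: already for $s=2$, $P=2-z_1z_2^{-1}-z_2z_1^{-1}$ contains $\gamma=(-1,1)$ with $\gamma_1<0$, and no "collection of surviving terms" removes it. Your sorting lemma also miscarries because $P\,z^{(s-1,\dots,s-1)}=(-1)^{\binom{s}{2}}\bigl(\sum_w \mathrm{sgn}(w)z^{w\delta}\bigr)^2$, so the vector you must subtract from a sum of two permutations of $\delta$ is $(s-1,\dots,s-1)$, not $\delta$, and the resulting partial sums can be negative. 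More fundamentally, no term-by-term argument on the $P\cdot G$ expansion can prove the lemma, because individual terms genuinely violate dominance and only cancel in the aggregate: for $s=t=2$, $\lambda=(1,1)$, $\mu=(2,0)$, the three contributions to $C_{z^\lambda/w^\mu}$ (from $\gamma=(0,0),(1,-1),(-1,1)$ with suitable matrices $(a_{ij})$) are $2,-1,-1$, each nonzero, summing to $0$. So the content of the lemma is precisely a cancellation phenomenon, which your bookkeeping of nonnegative matrices and Vandermonde-square monomials never organizes. Your positivity step is likewise not a proof: you correctly get integrality and the diagonal $s!$-type contribution, but the "rigidity/no-cancellation" claim that you yourself flag as the heart of the argument is exactly what is missing.

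For comparison, the paper proves both statements simultaneously by induction on $s$, using the decomposition $H=\sum_{l=1}^{s}H_{1,l}$, where $H_{1,l}=(1-z_lw_1^{-1})^{-1}\prod_{k\neq l}(1-z_lz_k^{-1})\prod_{j\geq 2}(1-z_lw_j^{-1})^{-1}\cdot H_1(z_1,\dots,\widehat{z_l},\dots,z_s;w_2,\dots,w_t)$. The needed cancellations are absorbed into this identity once and for all: each summand $H_{1,l}$ separately satisfies the dominance constraint (by the inductive hypothesis applied to the smaller $H_1$, combined with the visible structure of the remaining factors), and the coefficient of $z^\lambda/w^\lambda$ receives from each relevant $H_{1,l}$ exactly one contribution, namely $z_l^{\lambda_l}/w_1^{\lambda_1}$ from $(1-z_lw_1^{-1})^{-1}$ with coefficient $1$ times a positive integer from induction, giving positivity with no cancellation analysis at all. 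If you want to salvage your direct approach, you would need to replace the term-by-term estimate by some signed-involution or determinantal argument that implements the cancellation seen in the example above; as written, both halves of the lemma remain unproved.
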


\begin{proof} We use induction on $s$. The case of $s=1$ or $2$ is trivial by Proposition \ref{P:Xlambdaqmu}.
 Assume that it is
true in the case of $s-1$. Then in the case of $s$, we have
$H=\sum_{l=1}^s H_{1,l}$, where
\begin{align*}
H_{1,l}=&(1-z_lw_1^{-1})^{-1}\cdot\prod_{k\neq
l}(1-z_lz_k^{-1})\cdot\prod_{j=2}^t(1-z_lw_j^{-1})^{-1}\\
&\cdot H_1(z_1,\cdots,\widehat{z_l}, \cdots, z_s; w_2,\cdots, w_t),
\end{align*}
where the hat $~\widehat{}~~$ means omission. By induction
assumption, we know that any term $z_1^{a_1}\cdots
\widehat{z_l^{a_l}}\cdots z_s^{a_s}/(w_2^{b_2}\cdots w_t^{b_t})$
from
$$H_{1,l,1}=H_1(z_1,\cdots,\widehat{z_l},\cdots,
z_s;w_2,\cdots,w_t)$$ has the property that
$a_{j_1}+\cdots+a_{j_i}\geq b_2+\cdots+b_{i+1}$ for $i$ distinct
numbers $j_1,\cdots,j_i$ from the set
$\{1,\cdots,\widehat{l},\cdots,s\}$. The term $z_1^{c_1}\cdots
 z_s^{c_s}/(w_1^{d_1}\cdots w_t^{d_t})$ from $H_{1,l}/H_{1,l,1}$ has the property that
 $c_l+c_{j_1}+\cdots+c_{j_i}\geq d_1+\cdots+d_{i+1}$. Multiplying
 two terms together, the term
 $$z_1^{a_1+c_1}\cdots
z_l^{c_l}\cdots z_s^{a_s+c_s}/(w_1^{d_1}w_2^{b_2+d_2}\cdots
w_t^{b_t+d_t})$$
from $H_{1,l}$
has the property that
$a_{j_1}+\cdots+a_{j_i}+c_l+c_{j_1}+\cdots+c_{j_i}\geq
b_2+\cdots+b_{i+1}+d_1+\cdots+d_{i+1}$. Thus if it is written as
$z^{\lambda}/w^{\mu}$ for two partitions $\lambda,\mu$ , one should
have $\lambda_1+\cdots+\lambda_{i+1}\geq\mu_1+\cdots+\mu_{i+1}$.
This finishes the proof of the first statement.

For the second statement, let
$\lambda_1=\cdots=\lambda_a>\lambda_{a+1}$, then the term
$z^{\lambda}/w^{\lambda}$ can only come from $H_{1,l}$ with
$l=1,\cdots,a$. The only way it appears in $H_{1,l}$ is when we have
$z_l^{\lambda_l}/w_1^{\lambda_1}$ from $(1-z_lw_1^{-1})^{-1}$, with
coefficient $1$, and then $z_1^{\lambda_1}\cdots
\widehat{z_l^{\lambda_l}}\cdots
z_s^{\lambda_s}/(w_2^{\lambda_2}\cdots w_t^{\lambda_s})$ from
$H_{1,l,1}$, with coefficient being a positive integer by induction,
we then multiply them together to get the result.
\end{proof}

\begin{corollary}
For positive integer $n$, $\{X_{-\lambda}(1)|\lambda\vdash n\}$ is
linearly independent.
\end{corollary}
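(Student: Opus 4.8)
The plan is to deduce linear independence of $\{X_{-\lambda}(1)\mid\lambda\vdash n\}$ directly from the triangularity packaged in Lemma \ref{L:H1analysis}. First I would set $t=s=l(\lambda)$ is not quite right since different $\lambda\vdash n$ have different lengths, so instead I would fix $t=n$ (large enough to accommodate every partition of $n$) and, for each partition $\mu\vdash n$ of length $s=l(\mu)$, pair $X_{-\mu}(1)$ against the vector $Y_{-\nu}.e^{sh/2}$ via Proposition \ref{P:Xlambdaqmu}. Concretely, $\langle X_{-\mu}.e^{-l(\mu)h/2},Y_{-\nu}.e^{l(\mu)h/2}\rangle=\pm\,C_{z^{\mu}/w^{\nu}}.H_1(Z_s,W_t)$, and since $T$ preserves the scalar product, this equals $\langle X_{-\mu}(1),T(Y_{-\nu}.e^{sh/2})\rangle=\langle X_{-\mu}(1),p_\nu\rangle$ up to sign. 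Thus the matrix of scalar products $\bigl(\langle X_{-\mu}(1),p_\nu\rangle\bigr)_{\mu,\nu\vdash n}$ is, up to signs, the matrix $\bigl(C_{z^{\mu}/w^{\nu}}.H_1\bigr)$.

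The key step is then to observe that Lemma \ref{L:H1analysis} says exactly that this matrix is triangular with respect to the dominance order on partitions of $n$ — the entry $C_{z^{\mu}/w^{\nu}}.H_1$ vanishes unless $\mu\geq\nu$ — and that the diagonal entries $C_{z^{\mu}/w^{\mu}}.H_1$ are positive integers, hence nonzero. (One must be a little careful about the mismatch of lengths: when $l(\mu)=s<t$, we are extracting $C_{z^\mu/w^\nu}$ from $H_1(Z_s,W_t)$ where $\mu$ is padded with zeros to length $s$; the statement of Lemma \ref{L:H1analysis} is phrased precisely to allow $\mu$ to have length $\le t$, and one takes $s=l(\mu)$ so that $\lambda=\mu$ there is genuinely a partition of length $s$.) Refining the dominance order to any total order on $\{\mu\vdash n\}$, the matrix becomes triangular with nonzero diagonal, hence invertible over $\mathbb{Q}$.

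Finally, invertibility of the pairing matrix forces linear independence: if $\sum_{\mu\vdash n}c_\mu X_{-\mu}(1)=0$ with $c_\mu\in\mathbb{Q}(\alpha)$, then pairing with each $p_\nu$ gives $\sum_\mu c_\mu\langle X_{-\mu}(1),p_\nu\rangle=0$ for all $\nu\vdash n$, i.e. the vector $(c_\mu)$ lies in the kernel of an invertible matrix, so all $c_\mu=0$. (Since $X_{-\mu}(1)$ is a specialization at $\alpha=1$ it in fact lives in $\Lambda_{\mathbb Q}$, and one may take the $c_\mu$ in $\mathbb{Q}$; the same argument works verbatim.)

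I expect the only real subtlety — not a deep obstacle but the thing to get right — to be the bookkeeping of lengths and the zero-padding of partitions when applying Proposition \ref{P:Xlambdaqmu} and Lemma \ref{L:H1analysis} uniformly for all $\mu\vdash n$ at once, together with checking that the sign $(-1)^{s(s-1)\alpha/2}$ at $\alpha=1$ does not interfere (it only rescales rows of an already-triangular invertible matrix). Everything else is a direct citation of the two preceding results.
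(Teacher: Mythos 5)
Your argument is correct and is essentially the paper's own: both rest on Proposition \ref{P:Xlambdaqmu} together with Lemma \ref{L:H1analysis} to produce a dominance-triangular pairing matrix with positive integer diagonal at $\al=1$ (the paper packages the same facts, via $\langle q_\mu,m_\nu\rangle=\delta_{\mu\nu}$, as the statement that $X_{-\la}(1)$ expands in monomial symmetric functions with only $m_\mu$, $\mu\le\la$, appearing and with $m_\la$ present). One minor correction: $T(Y_{-\nu}.e^{sh/2})=q_\nu(\al^{-1})$ by Remark \ref{R:one-row-Jack} (hence the complete homogeneous product at $\al=1$), not $p_\nu$ — this slip is harmless because your argument uses only the invertibility of the matrix $\bigl(\pm C_{z^{\mu}/w^{\nu}}.H_1\bigr)$ of pairings against \emph{some} family of test vectors, and the sign and zero-padding issues are, as you note, immaterial.
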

\begin{proof} By the previous lemma, when expended into combinations of
monomials, $X_{-\lambda}(1)$ contains only terms $m_\mu$ with
$\mu\leq\lambda$, and it contains $m_\lambda$.
\end{proof}
Now we can prove the main theorem of this section.

\begin{theorem}
$\{X'_{-\lambda}(\alpha^{-1})|\lambda \in \mathcal {P}\}$ is a basis
for $\Lambda_F$.
\end{theorem}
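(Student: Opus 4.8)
The plan is to prove that $\{X'_{-\la}(\al^{-1}) \mid \la \in \mathcal{P}\}$ is a basis of $\Lambda_F$ by establishing that, for each fixed weight $n$, the finite set $\{X'_{-\la}(\al^{-1}) \mid \la \vdash n\}$ is a basis of the (finite-dimensional) degree-$n$ component $\Lambda_F^{(n)}$. Since $\dim_F \Lambda_F^{(n)}$ equals the number of partitions of $n$, which matches the cardinality of our set, it suffices to prove linear independence over $F = \mathbb{Q}(\al)$. The previous lemma guarantees $X'_{-\la}(\al^{-1}) \in \Lambda_F$ is homogeneous of degree $|\la|$, so the grading argument is legitimate.

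\medskip
\noindent\emph{Key steps.} First I would reduce linear independence over $F$ to a statement at a single value of the parameter. Suppose $\sum_{\la \vdash n} c_\la(\al^{-1}) X'_{-\la}(\al^{-1}) = 0$ with $c_\la \in F$; after clearing denominators we may assume each $c_\la$ is a polynomial in $\al$ with no common factor, so that not every $c_\la$ vanishes at $\al = 1$. Next I would like to specialize $\al \to 1$. This requires knowing that each $X'_{-\la}(\al^{-1})$ is regular at $\al = 1$ and specializes to $X_{-\la}(1)$ in the notation of Lemma~\ref{L:H1analysis} and its corollary; this regularity is exactly the content of the lemma stating $X'_{-\la} \in \Lambda_F$ together with the explicit form of $c_s(\al)$ (whose only potential issue, the factor $(s\al)!/(\al!)^s$, has been divided out). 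Specializing the relation at $\al = 1$ then yields $\sum_{\la \vdash n} c_\la(1) X_{-\la}(1) = 0$, a nontrivial linear dependence among the $X_{-\la}(1)$, contradicting the Corollary just proved (which used Lemma~\ref{L:H1analysis} to show $X_{-\la}(1)$ is unitriangular with respect to the monomial basis in the dominance order). Hence all $c_\la$ vanish identically, proving independence.

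\medskip
\noindent\emph{Spanning.} With linear independence of the $n$-th graded piece in hand, a dimension count finishes the argument: $\{X'_{-\la}(\al^{-1}) \mid \la \vdash n\}$ is a linearly independent subset of the $F$-vector space $\Lambda_F^{(n)}$ of dimension $\#\{\la \vdash n\}$, hence a basis of $\Lambda_F^{(n)}$; taking the union over all $n \geq 0$ gives a basis of $\Lambda_F = \bigoplus_n \Lambda_F^{(n)}$.

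\medskip
\noindent\emph{Main obstacle.} The delicate point is the specialization step: one must be sure that dividing by $c_s(\al)$ really does remove \emph{all} singularities at $\al = 1$, i.e.\ that no further pole in $\al$ is hidden inside $X_{-\la}(\al^{-1})$ once the combinatorially explicit non-rational prefactor is stripped away, and that the resulting value at $\al=1$ is genuinely $X_{-\la}(1)$ rather than some degenerate object. This is handled by invoking the expansion~(\ref{F:Xlambda}) together with Corollary~\ref{C:co-of-delta}, which expresses every coefficient of $q_\mu(\al^{-1})$ in $X_{-\la}(\al^{-1})$ as $c_s(\al)$ times a rational function of $\al$; continuity of that rational function at $\al = 1$, combined with the fact that the $q_\mu(\al^{-1})$ themselves are regular at $\al=1$, gives the required compatibility of specialization. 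Everything else is a formal dimension/triangularity bookkeeping that reuses results already established in the excerpt.
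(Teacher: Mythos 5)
Your proof is correct and follows essentially the same route as the paper: reduce to the degree-$n$ graded piece, clear denominators to obtain coprime polynomial coefficients not all vanishing at $\al=1$, specialize at $\al=1$, and contradict the linear independence of the $X_{-\la}(1)$'s established via Lemma \ref{L:H1analysis}; your extra care about regularity of the coefficients at $\al=1$ is a welcome elaboration of a step the paper passes over silently.
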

\begin{proof} We only need to show that for each positive integer
$n$, the set $\{X'_{-\lambda}(\alpha^{-1})|\lambda\vdash n\}$ is
linearly independent. Suppose it is linearly dependent, there are
relatively prime polynomials $f_\lambda(\alpha)\in
\mathbb{Q}[\alpha],\lambda\vdash n$ such that $\sum_{\lambda\vdash
n}f_\lambda(\alpha)X'_{-\lambda}(\alpha^{-1})=0$. Setting $\al=1$ it
follows that $\sum_{\lambda\vdash n}f_\lambda(1)X'_{-\lambda}(1)=0$,
or $\sum_{\lambda\vdash
n}f_\lambda(1)b_{\lambda}(1)^{-1}X_{-\lambda}(1)=0$. Note that the
coefficients $f_{\lambda}(1)$ are not all zero since $\alpha-1$ is
not the common devisor of the $f_\lambda(\alpha)$'s. This
contradicts to the fact that the $X_{-\lambda}(1)$'s are independent.
\end{proof}

For convenience we list the properties of this basis (generated
by vertex operators) in the following.
\begin{proposition}\label{makecase}
The basis $X'_{-\lambda}(\alpha^{-1})$'s has the following
properties.

(a) $X'_{-\la}(\alpha^{-1})=Q_{\la}(\alpha^{-1})$, for $\la=((k+1)^s)\in\mathcal{P}$;

(b) $X'_{-\la}(\alpha^{-1})=\frac{-s}{2(\alpha^{-2}+s)}Q_{\la}(\alpha^{-1})$, for $\la=((k+1)^s,k)\in\mathcal{P}$;

(c) $X'_{-\lambda}(\alpha^{-1})$ is a linear combination of the
$q_\mu(\alpha^{-1})'s$ with $l(\mu)\leq l(\lambda)$,

(d) $X'_{-\lambda}(\alpha^{-1})$ is orthogonal to
 $q_\mu(\alpha^{-1})$ if $l(\mu')>l(\lambda')$.
\end{proposition}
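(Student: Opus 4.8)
\textbf{Proof proposal for Proposition \ref{makecase}.}

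The plan is to treat the four statements in order of increasing difficulty, reusing the structural results already in hand. For part (c) I would start from the combinatorial expansion (\ref{F:Xlambda}): $T(X_{-\la_1}\cdots X_{-\la_s}.e^{nh})$ equals a sign times $\Delta_s^\al$ applied to $Q_{\la_1}(\al^{-1})\cdots Q_{\la_s}(\al^{-1})$. Since each $D_i$ only lowers an index without changing the number of factors, the result is a linear combination of products $Q_{\mu_1}(\al^{-1})\cdots Q_{\mu_s}(\al^{-1})$ of at most $s=l(\la)$ one-row Jack functions, hence a linear combination of $q_\mu(\al^{-1})$ with $l(\mu)\le l(\la)$; dividing by $c_{l(\la)}(\al)$ is harmless. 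For part (d) I would dualize: $\langle X'_{-\la}(\al^{-1}), q_\mu(\al^{-1})\rangle$ is, up to the scalar $c_{l(\la)}(\al)^{-1}$, the contraction $\langle X_{-\la}.e^{-l(\la)h/2}, Y_{-\mu}.e^{l(\la)h/2}\rangle$ by Remark \ref{R:one-row-Jack} (which identifies $T(Y_{-\mu})$ with $q_\mu(\al^{-1})$ and says $Y^*$ is the adjoint of $Y$) together with the scalar-product-preserving property of $T$. By Proposition \ref{P:Xlambdaqmu} this contraction is a coefficient extraction from $H_\al(Z_s,W_t)$ where $t=l(\mu)$; but $H_\al$ has only $s=l(\la)$ $z$-variables, so for the monomial $z^\la/w^\mu$ to occur we need $l(\mu)\le l(\la)$. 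Rewriting this in conjugate language, $l(\mu)=\mu_1'\le\la_1'=l(\la)$ is exactly the negation of the hypothesis $l(\mu')>l(\la')$ — wait, I must be careful here: the hypothesis is stated with $\mu'$ and $\la'$, so the correct reading is that the contraction involves $l(\la)$ many $z$'s and $l(\la')=\la_1$ is irrelevant; the right statement to prove is that $\langle X'_{-\la},q_\mu\rangle=0$ when $q_\mu$ is ``too long in the conjugate direction,'' and the second clause of Proposition \ref{P:Xlambdaqmu} (vanishing when $\mu_1>\la_1$) gives precisely $C_{z^\la/w^\mu}H_\al=0$ unless $\mu_1\le\la_1$, i.e. unless $l(\mu')\le l(\la')$. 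This is the cleanest route and I would present (d) via that vanishing clause rather than via a variable count.

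For part (a), the rectangular case, I would combine Theorem \ref{L:squareJack} in the $t=0$ case with Corollary \ref{C:explicitcoefficient}: there $T(X_{-\la}e^{-sh/2})=c_{s,0}(\al)Q_{(k+1)^s}(\al^{-1})$ with $c_{s,0}(\al)=c_s(\al)=(-1)^{\al s(s-1)/2}(s\al)!(\al!)^{-s}$, so dividing by $c_s(\al)$ gives $X'_{-\la}(\al^{-1})=Q_\la(\al^{-1})$ on the nose. The one subtlety is that $c_s(\al)$ is not rational in $\al$, but the preceding lemma already guarantees $X'_{-\la}\in\Lambda_F$, so the identity, proved for $\al\in\mathbb Z_{>0}$, extends to all $\al$ by the usual Zariski-density/polynomial-identity argument over $\mathbb Q(\al)$ once one knows both sides lie in $\Lambda_F$ and agree at infinitely many integers (here one should pass to the normalization $J_\la$ where coefficients are polynomial, or argue coefficient-wise in the $q_\mu$ basis using Corollary \ref{C:co-of-delta}).

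Part (b) is the main obstacle and the only place requiring genuine computation. Here $\la=((k+1)^s,k)$, which is the $t=1$ case of Theorem \ref{L:squareJack}, so $T(X_{-\la}e^{-(s+1)h/2})=c_{s,1}(\al)Q_\la(\al^{-1})$ for some function $c_{s,1}(\al)$, but unlike the $t=0$ case this constant is not supplied by Dyson's conjecture. The strategy is to pin down the ratio $c_{s,1}(\al)/c_s(\al)$ by a single coefficient comparison: apply both sides to a convenient linear functional — for instance pair with $m_{(1^{|\la|})}$, or equivalently extract the coefficient of $q_{(1^{|\la|})}$ — using the known expansion of $Q_\la(\al^{-1})$ near the bottom of the dominance order and the contraction formula of Proposition \ref{P:Xlambdaqmu} to compute the same coefficient of $X'_{-\la}(\al^{-1})$. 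Concretely, $\langle X'_{-\la}(\al^{-1}), q_{(1^N)}(\al^{-1})\rangle$ is extracted from $H_\al(Z_{s+1},W_N)$ and should evaluate, after dividing by $c_{s+1}(\al)$, to a ratio that simplifies to $\frac{-s}{2(\al^{-2}+s)}$ times the corresponding coefficient of $Q_\la(\al^{-1})$; the factor $2$ traces back to the ``$2\al$'' in the annihilation part of $X(z)$ and the $\al^{-2}+s$ to a resultant-type contraction between the extra $w$-variable and the $s+1$ $z$-variables. I expect the bookkeeping of which monomials of $H_\al(Z_{s+1},W_1)$ contribute to $z^\la/w^{(k)}$ — there are only $O(s)$ of them because of the near-rectangular shape — to be the delicate part, and I would organize it exactly as in the proof of Lemma \ref{L:H1analysis}, splitting off the distinguished $z$-variable sitting over the short last row and inducting on $s$ to reduce to the already-known rectangular constant $c_s(\al)$.
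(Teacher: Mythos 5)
Parts (a), (c) and (d) of your proposal essentially coincide with the paper's proof. Part (a) is exactly Corollary \ref{C:explicitcoefficient}; part (c) is read off from (\ref{F:Xlambda}), as you do, because the lowering operators $D_i$ never increase the number of one-row factors, so only $q_\mu$ with $l(\mu)\leq l(\la)$ occur; and part (d), after your mid-paragraph self-correction, is precisely the paper's argument: the vanishing clause of Proposition \ref{P:Xlambdaqmu} (the contraction vanishes when $\mu_1>\la_1$, i.e.\ when $l(\mu')>l(\la')$), combined with $T(Y_{-\mu}.e^{nh})=q_\mu(\al^{-1})$ and the fact that $T$ preserves the scalar product. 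Your remark that identities verified for $\al\in\mathbb{Z}_{>0}$ extend to $\mathbb{Q}(\al)$ because all the $q_\mu$-coefficients involved are rational in $\al$ is a correct and useful supplement, implicit in the paper.

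Part (b) is where there is a genuine gap. The functional you first propose is vacuous: pairing with $m_{(1^{|\la|})}$ extracts the coefficient of $q_{(1^{|\la|})}$, and that coefficient is zero on both sides (on the left because $l((1^{|\la|}))=|\la|>l(\la)$, by your own part (c); on the right by the triangularity in Lemma \ref{L:triangular}), so it determines nothing. The quantity you then actually set out to compute, $\langle X'_{-\la},q_{(1^{N})}\rangle$ with $N=|\la|$, is a different functional (the $m_{(1^N)}$-coefficient, not the $q_{(1^N)}$-coefficient); it could work in principle, but it requires evaluating $C_{z^\la/w^{(1^N)}}H_\al(Z_{s+1},W_N)$, a genuinely new Dyson-type computation that you never carry out, and your subsequent reduction to $H_\al(Z_{s+1},W_1)$ and the monomial $z^\la/w^{(k)}$ is degree-inconsistent since $|\la|\neq k$; moreover Lemma \ref{L:H1analysis}, which you invoke as a template, is special to $\al=1$. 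Thus the asserted constant $\frac{-s}{2(\al^{-2}+s)}$ is never derived. The paper pins it down much more cheaply: by (\ref{F:Xlambda}) and Lemma \ref{L:triangular}, the coefficient of $q_\la$ in $T(X_{-\la}.e^{-(s+1)h/2})$ for $\la=((k+1)^s,k)$ involves only the Dyson constant term of $\Delta_{s+1}^\al$ together with the coefficients of $D_iD_{s+1}^{-1}$, which by the symmetries of $\Delta_{s+1}^\al$ are given by the $i=1$ case of (\ref{colambdai}) in Proposition \ref{P:alambdai}; dividing by $c_{s+1}(\al)$ then yields the stated ratio. To complete your route you should either quote that known coefficient or actually perform a contraction evaluation; as written, your part (b) is an unexecuted plan resting on a miscast coefficient comparison.
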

\begin{proof}
Equality (a) comes from Corollary \ref{C:explicitcoefficient}.
Equality (b) comes from $t=1$ case of Theorem \ref{L:squareJack} with the coefficient fixed
by using $i=1$ case of formula (\ref{colambdai}) in Proposition \ref{P:alambdai}.
Property (c) is from formula (\ref{F:Xlambda}) in the proof of Corollary \ref{L:squareJack}.
Property (d) comes from Proposition \ref{P:Xlambdaqmu}.
\end{proof}
\begin{remark} The vertex operator $X_{\lambda}$ and $X'_{\lambda}$ differ by
the Dyson's constant term in $\prod_{1\leq i\neq j\leq
l(\lambda)}(1-x_ix_j^{-1})^{\alpha}$ up to a sign.
\end{remark}

To end this section, we give a new vertex operator realization of
Schur functions of rectangular shapes with part of the last column
removed (cf. \cite{J1}). Note that $l(\mu)\leq\l(\la)$ is a necessary condition for
$q_\mu$ to appear in the expansion of $X_{-\la}(\al^{-1})$. As a
byproduct of Lemma \ref{L:H1analysis}, we have
\begin{corollary}
For partition $\la=((k+1)^s,k^t)$ of length $s+t$,
$X_{-\lambda}(1)=c_\la S_\lambda$ for some non-zero constant
$c_\la$.
\end{corollary}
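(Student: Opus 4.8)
The plan is to combine the triangularity of $X_{-\lambda}(1)$ established in the preceding corollary with the classical characterization of Schur functions among symmetric functions with nonnegative integer coefficients. First I would invoke Lemma \ref{L:H1analysis} together with Proposition \ref{P:Xlambdaqmu}: specializing $\alpha=1$, the expansion of $X_{-\lambda}(1)$ in the monomial basis involves only $m_\mu$ with $\mu\leq\lambda$, it contains $m_\lambda$ with a positive integer coefficient, and more precisely each coefficient $C_{z^\lambda/w^\mu}.H_1$ is a nonnegative integer (positive when $\mu=\lambda$). So $X_{-\lambda}(1)$ is an integral symmetric function that is $m$-triangular with apex $\lambda$.

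Next I would bring in the homogeneity and degree-$(s+t)$ row-length constraint. By Proposition \ref{makecase}(c) (or directly from the shape of $H_1(Z_s,W_t)$ with $s+t$ sets of $z$-variables after the half-vertex $Y$-part is accounted for), $X_{-\lambda}(1)$ is a linear combination of $q_\mu(1)=h_\mu$ with $l(\mu)\leq l(\lambda)=s+t$; dually, every partition $\mu$ indexing a monomial $m_\mu$ appearing in $X_{-\lambda}(1)$ has at most $s+t$ parts, i.e. $\mu_1'\leq s+t$. Since $\lambda=((k+1)^s,k^t)$ has $\lambda_1'=s+t$, the set of partitions $\mu$ with $\mu\leq\lambda$ and $l(\mu)\leq s+t$ is exactly the set of partitions dominated by $\lambda$ whose conjugate fits in the same first-column length as $\lambda$.

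Then I would identify $X_{-\lambda}(1)$ with a Schur function by a uniqueness argument. Recall that the $\alpha=1$ Jack scalar product is the Hall inner product, and $T$ is an isometry, so $\langle X_{-\lambda}(1),X_{-\lambda}(1)\rangle=\langle X_{-\lambda}.e^{-l(\lambda)h/2},X_{-\lambda}.e^{-l(\lambda)h/2}\rangle$ is computable from $H_1$; the key point, however, is not its value but the following: any nonzero symmetric function $f$ which is homogeneous of degree $n$, has nonnegative integer monomial coefficients, is $m$-triangular with apex $\lambda$ and leading coefficient $1$ when $\alpha=1$, and whose nonzero monomial terms $m_\mu$ all satisfy $\mu\subseteq(\text{rectangle containing }\lambda)$, must actually equal $s_\lambda$ up to scaling — more carefully, $X_{-\lambda}(1)$ lies in the span of $\{s_\mu:\mu\leq\lambda\}$ since the $s_\mu$ and $m_\mu$ are related by a unitriangular matrix, and I would then show all coefficients but the one on $s_\lambda$ vanish by pairing against $q_\nu(1)=h_\nu$ and using Proposition \ref{makecase}(d): $X_{-\lambda}(1)\perp q_\nu(1)$ whenever $l(\nu')>l(\lambda')=\lambda_1'$, equivalently whenever $\nu_1>\lambda_1=k+1$. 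Since $\langle s_\mu,h_\nu\rangle=K_{\mu\nu}\geq 0$ is the Kostka number, orthogonality of $X_{-\lambda}(1)$ to all $h_\nu$ with $\nu_1>k+1$ forces every $s_\mu$ occurring in $X_{-\lambda}(1)$ to have $\mu_1\leq k+1$; combined with $\mu\leq\lambda$ and $\mu_1'\leq s+t$ this pins $\mu$ down. A short check (for instance, by comparing against one further $h_\nu$, or by noting the constraints $\mu\leq\lambda$, $\mu_1\leq k+1$, $\mu_1'\leq s+t$ already force $\mu=\lambda$ when $\lambda=((k+1)^s,k^t)$) then yields $X_{-\lambda}(1)=c_\lambda s_\lambda$ with $c_\lambda=C_{z^\lambda/w^\lambda}.H_1$ a positive integer.

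The main obstacle is the last combinatorial step: verifying that the three constraints ($\mu$ dominated by $\lambda$, $\mu_1\leq k+1$, and $\mu$ having at most $s+t$ parts) leave no room for any $\mu\neq\lambda$, so that the expansion of $X_{-\lambda}(1)$ in Schur functions collapses to a single term. For the pure rectangular case $t=0$ this is immediate (it is already Proposition \ref{makecase}(a) with $\alpha=1$, since $Q_\lambda(1)=s_\lambda$), and for $t\geq 1$ the marked-rectangle shape is rigid enough that the width bound $\mu_1\leq k+1$ together with $|\mu|=|\lambda|$ and $\mu\leq\lambda$ should force equality; I would carry this out by a direct inspection of the Young diagram rather than an inductive argument. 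The rest is bookkeeping: unwinding that $T$ is an isometry for $\alpha=1$, that $q_\mu(1)=h_\mu$ and $Q_\mu(1)=s_\mu$, and that the Jack Pieri-type orthogonality in Proposition \ref{makecase}(d) specializes correctly.
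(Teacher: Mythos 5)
Your overall route is the paper's intended one: combine the dominance triangularity coming from Lemma \ref{L:H1analysis} (via Proposition \ref{P:Xlambdaqmu}) with the restriction that only $q_\mu$ with $l(\mu)\leq l(\lambda)$ occur in $X_{-\lambda}$, then specialize at $\alpha=1$. But two of your intermediate steps fail as stated. First, from $X_{-\lambda}(1)\in\operatorname{span}\{h_\mu : l(\mu)\leq s+t\}$ you deduce ``dually'' that every monomial $m_\mu$ occurring in $X_{-\lambda}(1)$ has at most $s+t$ parts. That implication is false ($h_2=m_2+m_{(1,1)}$ already violates it), and indeed the statement itself contradicts the conclusion you want: $c_\lambda s_\lambda$ contains $m_\mu$ for every $\mu\leq\lambda$, including $\mu=(1^{|\lambda|})$. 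What is true, and what the argument needs, is the analogous statement for the \emph{Schur} support: since $h_\mu=\sum_{\nu\geq\mu}K_{\nu\mu}s_\nu$ and the set of partitions with at most $N$ parts is upward closed in dominance, $\operatorname{span}\{h_\mu: l(\mu)\leq s+t\}=\operatorname{span}\{s_\nu: l(\nu)\leq s+t\}$. Relatedly, your use of Proposition \ref{makecase}(d) through Kostka positivity does not work: orthogonality of $X_{-\lambda}(1)$ to $h_\nu$ with $\nu_1>k+1$ only says certain monomial coefficients vanish (which already follows from the dominance constraint $\mu\leq\lambda$), and since the Schur coefficients $d_\mu$ of $X_{-\lambda}(1)$ are not known to be nonnegative, you cannot pass from $\sum_\mu d_\mu K_{\mu\nu}=0$ to termwise vanishing. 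The width bound $\mu_1\leq k+1$ is in any case redundant. (Your claim that all $C_{z^\lambda/w^\mu}.H_1$ are nonnegative is likewise unsupported by Lemma \ref{L:H1analysis}, which controls only the diagonal coefficient, but it is not needed: only $C_{z^\lambda/w^\lambda}.H_1\neq0$ matters for $c_\lambda\neq0$.)

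Second, the rigidity step you flag as the main obstacle is mis-stated for $t\geq1$: you assert that $\mu\leq\lambda$, $|\mu|=|\lambda|$ and $\mu_1\leq k+1$ force $\mu=\lambda$, which is false (take $\mu=(1,1,1)\leq(2,1)=\lambda$). The constraint that does the work is the length bound, not the width bound: if $\mu\leq\lambda=((k+1)^s,k^t)$, $|\mu|=|\lambda|$ and $l(\mu)\leq s+t$, then $\mu=\lambda$. The quickest proof is by conjugation: $\mu'\geq\lambda'=((s+t)^k,s)$ together with $\mu'_1\leq s+t$ forces $\mu'_1+\cdots+\mu'_i=i(s+t)$, hence $\mu'_i=s+t$, for $i\leq k$, and then the weight count gives $\mu'_{k+1}=s$ and $\mu'_j=0$ for $j>k+1$, i.e. $\mu'=\lambda'$. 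With these two repairs your argument closes exactly as the paper intends: $X_{-\lambda}(1)$ lies in $\operatorname{span}\{s_\nu:\nu\leq\lambda\}\cap\operatorname{span}\{s_\nu: l(\nu)\leq s+t\}$, which by the rigidity statement is the line spanned by $s_\lambda$, and the coefficient is nonzero because the coefficient of $m_\lambda$ is $\pm C_{z^\lambda/w^\lambda}.H_1\neq0$ by Lemma \ref{L:H1analysis}.
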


\section{Stanley's positivity conjecture and Filtration of rectangles}
From now on, the parameter for Jack functions is assumed to be
$\alpha$, subsequently the scalar
product is the original one, i.e. $\langle p_\la,p_\mu\rangle=\delta_{\la\mu}z_\la\al^{l(\la)}$. The following conjecture
about Littlewood-Richardson coefficients of the Jack function was made by
Stanley \cite{S}, referred to as Stanley's LR conjecture. Denote $\mathbb{Z}_{\geq0}[\al]$ as the set of polynomials in $\al$ with non-negative
integer coefficients.
\begin{conjecture}\label{C:Stanley's conjecture} The LR-cofficient
$C_{\mu\nu}^\la(\al):=$ $\langle J_\mu
J_\nu,J_\la\rangle$ is in
$\mathbb{Z}_{\geq0}[\al]$ for any triple of partitions $(\mu,\nu,\la)$.
\end{conjecture}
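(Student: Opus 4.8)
The plan is to reduce the general coefficient $C_{\mu\nu}^\la(\al)=\langle J_\mu J_\nu,J_\la\rangle$ to a controlled sequence of Pieri-type operations and then to organize those operations so that positivity becomes visible. First I would fix $J_\mu$ and view multiplication by $J_\nu$ as applying a string of operators to $J_\mu$, building up $\nu$ one corner at a time through the filtration of rectangles sketched in the introduction. Concretely, I would decompose $\nu$ along its corners $C_i(\nu)$, express $J_\nu$ via products of rectangular Jack functions realized by the vertex operators $X_{-\la}$ of Theorem \ref{L:squareJack}, and thereby rewrite the whole bracket as an iterated contraction of $X_{-\la}$-type operators against $J_\la$. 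The upshot would be a sum over chains of intermediate partitions $\mu=\rho^0\subset\rho^1\subset\cdots\subset\rho^r=\la$, each link contributing a Pieri factor that we already understand.

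The base cases are exactly the ones where positivity is transparent. When $\nu=(n)$ is a single row, $C_{\mu,(n)}^\la(\al)$ is given by the Pieri formula of Theorem \ref{T:Pieriformula} as $n!\,\al^{-n}h^*_\la(\la_u)h_*^\la(\la_b)h^*_\mu(\mu_b)h_*^\mu(\mu_u)$, a product of upper and lower hook-lengths; each such factor is linear in the parameter with non-negative coefficients, so after the parameter and normalization convention adopted in this section the product lies in $\mathbb{Z}_{\geq0}[\al]$. When $\nu$ is a full rectangle or a marked rectangle, I would instead invoke the product-of-dual-shapes formula and its marked-rectangle generalization established earlier in the paper to read off the coefficients as manifestly non-negative combinations. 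These cases anchor the induction.

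The inductive step is to pass from $\nu$ to a partition with one additional strip by a single Pieri application, and to argue that composing such steps preserves membership in $\mathbb{Z}_{\geq0}[\al]$. Here I would exploit that the $X_{-\la}$ realize precisely the rectangular pieces while the half-vertex operators $Y_n^*$ implement removal of horizontal strips, as recorded in the terminology following Theorem \ref{T:Pieriformula}; stringing these together along the corner-by-corner filtration should express $C_{\mu\nu}^\la(\al)$ as a sum over the chains $\rho^0\subset\cdots\subset\rho^r$ of products of Pieri coefficients, each factor already known to be positive by the base case.

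The hard part, and the reason this statement remains genuinely open, is that the iteration does not respect positivity term by term. The contraction function $H_\alpha(Z_s,W_t)$ of Proposition \ref{P:Xlambdaqmu} carries both signs, and the triangular transition coefficients $a_{\la\mu}(\al^{-1})$ of Lemma \ref{L:triangular} need not be positive, so expanding the iterated contraction produces extensive cancellation in which only the net sum is non-negative. The real content of a complete proof would therefore be a combinatorial mechanism — a sign-reversing involution, or a manifestly positive tableau statistic attached to the chains $\rho^0\subset\cdots\subset\rho^r$ — that gathers the signed contributions into non-negative packets. I expect constructing this cancellation model, rather than setting up the vertex-operator bookkeeping, to be the central obstacle, and it is precisely at this step that the present methods yield only the special cases in which one factor is a single row, a rectangle, or a marked rectangle.
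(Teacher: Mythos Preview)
The statement is Stanley's conjecture and the paper does not prove it; it establishes only the special cases in Theorem \ref{T:Jack*actsonsquareJack}, Corollary \ref{C:StanleyL-R}, and Proposition \ref{P:stanley1}. You correctly recognize in your final paragraph that the full statement is open, so in that sense your assessment matches the paper's.

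However, your account of which special cases are settled and how is off in a way that matters. You describe the base cases as ``$\nu$ a full rectangle or a marked rectangle'' and propose to invoke the product-of-dual-shapes formula; but Theorem \ref{T:Jack*actsonsquareJack} and Corollary \ref{C:StanleyL-R} require the \emph{target} partition $\la$ to be rectangular or marked rectangular, not the factor $\nu$. The mechanism there is not an inductive filtration of $\nu$ at all: the point is that a rectangle has a single corner, so applying $q_{\nu_1}^*,q_{\nu_2}^*,\ldots$ to $J_\la$ is forced step by step, and $J_\nu^*.J_\la$ collapses to a single multiple of $J_{\overline\nu}$ with an explicit product-of-linear-factors coefficient. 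No sum over chains and no cancellation arises. Your proposed route---building $J_\nu$ from its rectangular filtration via the vertex operators---would instead invoke Theorem \ref{T:iterativeJack}, which realizes $Q_\nu$ through iterated \emph{adjoints} $(\cdots(f_s^*.f_{s-1})^*\cdots)^*.f_1$, not through a product; translating that into a multiplication operator on $J_\mu$ reintroduces exactly the signed contraction $H_\alpha$ you worry about, and it is not clear the scheme even reproduces the rectangular-$\la$ case without already knowing the answer. The remaining cases in Proposition \ref{P:stanley1} ($\nu=(n,1)$ with $\la-\mu$ on two rows, and $\nu=(2,1)$ unconditionally) are done by direct expansion of $J_\nu$ in the $q$-basis and explicit Pieri computation, again with no filtration.
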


\begin{remark}\label{R:Stanley's conjecture}
Stanley's Pieri formula for Jack function implies his conjecture in the case of $l(\nu)=1$.
One the other hand, using \cite{KS}, we know that
$C_{\mu\nu}^\la(\al)\in \mathbb Z[\al]$. If the classical
Littlewood-Richardson coefficient $C_{\mu\nu}^\la=1$,  Stanley
further conjectured a form for $C_{\mu\nu}^\la(\al)$ which he verified
under a technical condition, and this contains part of the following Theorem \ref{T:Jack*actsonsquareJack}. Furthermore Stanley showed the latter
conjecture is true for $\nu=(2,1)$ under the condition $C_{\mu\nu}^\la=1$.
We will show that Conjecture \ref{C:Stanley's conjecture} is true for $\nu=(2,1)$.
\end{remark}

\subsection{Rectangular and marked rectangular cases}
In the following we prove two special cases of Conjecture \ref{C:Stanley's conjecture}.
\begin{definition}
For a rectangular partition $\la=(r^s)$, and a partition
$\nu=(\nu_1,\nu_2,\cdots,\nu_s)\subset\la$, define the complement partiton
of $\nu$ in $\la$ as
$\overline{\nu}=\la-'\nu=(r-\nu_s,r-\nu_{s-1},\cdots,r-\nu_1)$.
\end{definition}

 \begin{example}\label{E:complement}
 For $\la=(6^5)$, $\nu=(6,5,3,3,0)$, we have $\la-'\nu=(6,3,3,1,0)$. As shown in the following Fig.2 (a), the boxes below the bold line forms an upside-down $\nu$ of $\la$. The partition above the bold line is $\la-'\nu$.
\end{example}

 \vskip 0.0cm
 \setlength{\unitlength}{0.5cm}
\begin{picture}(1,1)

 \put(0,0){\line(1,0){6}}
 \put(0,-5){\line(1,0){6}}
 \put(0,0){\line(0,-1){5}}
 \put(6,0){\line(0,-1){5}}
\put(0,-1){\line(1,0){6}}
\put(0,-2){\line(1,0){6}}\put(0,-3){\line(1,0){6}}\put(0,-4){\line(1,0){6}}
\put(1,0){\line(0,-1){5}}\put(2,0){\line(0,-1){5}}\put(3,0){\line(0,-1){5}}
\put(4,0){\line(0,-1){5}}\put(5,0){\line(0,-1){5}}
\put(0,-4.03){\line(1,0){1.03}}\put(0,-4.06){\line(1,0){1.03}}
\put(1.06,-4.05){\line(0,1){1}}\put(1.03,-4.05){\line(0,1){1}}
\put(1.06,-3.05){\line(1,0){1.97}}\put(1.03,-3.02){\line(1,0){1.98}}
\put(3.06,-3.05){\Large\line(0,1){2}}\put(3.03,-3.05){\Large\line(0,1){2}}
\put(3.05,-1.02){\Large\line(1,0){2.94}}\put(3.05,-1.05){\Large\line(1,0){2.94}}

\put(1.6,-6.2){Fig.2 (a)}
\put(8,0){\line(1,0){6}}
 \put(8,-5){\line(1,0){6}}
 \put(8,0){\line(0,-1){5}}
 \put(14,0){\line(0,-1){5}}
\put(8,-1){\line(1,0){6}}
\put(8,-2){\line(1,0){6}}\put(8,-3){\line(1,0){6}}\put(8,-4){\line(1,0){6}}
\put(9,0){\line(0,-1){5}}\put(10,0){\line(0,-1){5}}\put(11,0){\line(0,-1){5}}
\put(12,0){\line(0,-1){5}}\put(13,0){\line(0,-1){5}}
\put(8,-4.03){\line(1,0){1.03}}\put(8,-4.06){\line(1,0){1.03}}
\put(9.06,-4.05){\line(0,1){1}}\put(9.03,-4.06){\line(0,1){1}}
\put(9.05,-3.05){\line(9,0){1.98}}\put(9.03,-3.02){\line(1,0){1.98}}
\put(11.05,-3.05){\Large\line(0,1){2}}\put(11.02,-3.05){\Large\line(0,1){2}}
\put(11.05,-1.02){\Large\line(1,0){2.94}}\put(11.02,-1.05){\Large\line(1,0){2.94}}

\put(9.15,-1.45){$a$}
\put(9.5,-1.5){\vector(0,-1){1.5}}

\put(9.6,-1.5){\line(1,0){0.1}}
\put(9.8,-1.5){\line(1,0){0.1}}
\put(10,-1.5){\vector(1,0){1}}

\put(11.6,-4.85){$b$} \put(11.5,-4.4){\line(0,1){0.1}}\put(11.5,-4.2){\line(0,1){0.1}}\put(11.5,-4){\vector(0,1){3}}
\put(11.5,-4.5){\vector(-1,0){3.5}}
\put(9.7,-6.2){Fig.2 (b)}
 \put(16,0){\line(1,0){6}}
 \put(16,-5){\line(1,0){6}}
 \put(16,0){\line(0,-1){5}}
 \put(22,0){\line(0,-1){5}}
\put(16,-1){\line(1,0){6}}
\put(16,-2){\line(1,0){6}}\put(16,-3){\line(1,0){6}}\put(16,-4){\line(1,0){6}}
\put(17,0){\line(0,-1){5}}\put(18,0){\line(0,-1){5}}\put(19,0){\line(0,-1){5}}
\put(20,0){\line(0,-1){5}}\put(21,0){\line(0,-1){5}}
\put(16,-4.03){\line(1,0){1.03}}\put(16,-4.06){\line(1,0){1.03}}
\put(17.05,-4.05){\line(0,1){1}}\put(17.02,-4.05){\line(0,1){1}}
\put(17.05,-3.05){\line(1,0){1.97}}\put(17.02,-3.02){\line(1,0){1.98}}
\put(19.05,-3.05){\Large\line(0,1){2}}\put(19.02,-3.05){\Large\line(0,1){2}}
\put(19.05,-1.02){\Large\line(1,0){2.94}}\put(19.05,-1.05){\Large\line(1,0){2.94}}

\put(17.15,-1.45){$a$} \put(17.5,-1.6){\line(0,-1){0.1}}\put(17.5,-1.8){\line(0,-1){0.1}}\put(17.5,-2){\vector(0,-1){1}}

\put(17.5,-1.5){\vector(1,0){4.5}}
\put(19.1,-4.9){$b$}
\put(19.5,-4.5){\vector(0,1){4.5}}

\put(19.6,-4.5){\line(1,0){0.1}}
\put(19.8,-4.5){\line(1,0){0.1}}
\put(20,-4.5){\vector(1,0){2}}
\put(17.5,-6.2){Fig.2 (c)}
\end{picture}
\vskip 3.4cm
Note that $\nu\subset (r^s)$ is equivalent to that $l(\nu)\leq s$ and $\nu_1\leq r$, and in this case, the Young diagram of $(r^s)-'\nu$ is the set difference of $Y(\la)$ with the upside-down $\nu$ of $\la$. We have the following simple properties for the complement of
partitions with the proof being obvious thus omitted.
\begin{lemma}\label{L:complementofrectangular}
For a rectangular partition $\la=(r^s)$, and partitions
$\nu,\omega\subset\la$, we have
\begin{align*}
&\overline{\overline{v}}=v,\\
&\omega\geq\nu\Rightarrow\overline{\omega}\geq\overline{\nu},\\
&\omega\supset\nu\Rightarrow\overline{\omega}\subset\overline{\nu}.
\end{align*}
\end{lemma}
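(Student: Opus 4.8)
The statement to prove is Lemma~\ref{L:complementofrectangular}, which collects three simple properties of the complement $\overline{\nu} = \la -' \nu$ of a partition inside a rectangle $\la = (r^s)$: that complementation is an involution, that it reverses the dominance order, and that it reverses containment. Although the paper says "the proof being obvious thus omitted," a clean plan is still worth recording.

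\begin{proof}[Proof proposal]
The plan is to unwind the definition $\overline{\nu} = (r-\nu_s, r-\nu_{s-1}, \dots, r-\nu_1)$ directly in each case, always padding $\nu$ (and $\omega$) with zeros so that both are regarded as sequences of exactly $s$ nonnegative integers bounded above by $r$; recall that $\nu \subset (r^s)$ is precisely the condition $l(\nu) \le s$ and $\nu_1 \le r$, so this padding is legitimate and $\overline{\nu}$ is again such a sequence, weakly decreasing because $\nu$ is weakly increasing after the reversal.

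First, for the involution $\overline{\overline{\nu}} = \nu$: writing $\overline{\nu} = (\omega_1, \dots, \omega_s)$ with $\omega_i = r - \nu_{s+1-i}$, applying the definition again gives $(\overline{\overline{\nu}})_i = r - \omega_{s+1-i} = r - (r - \nu_{s+1-(s+1-i)}) = r - (r - \nu_i) = \nu_i$, so the two operations cancel componentwise. Second, for order reversal, suppose $\omega \ge \nu$ in dominance, i.e. $\sum_{j=1}^i \omega_j \ge \sum_{j=1}^i \nu_j$ for all $i$; since $|\omega| = |\nu|$ (dominance compares partitions of the same weight) this is equivalent to $\sum_{j=i+1}^s \omega_j \le \sum_{j=i+1}^s \nu_j$ for all $i$. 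Now $\sum_{j=1}^i \overline{\omega}_j = \sum_{j=1}^i (r - \omega_{s+1-j}) = ir - \sum_{j=s+1-i}^s \omega_j$, and similarly for $\overline{\nu}$; subtracting, $\sum_{j=1}^i \overline{\omega}_j - \sum_{j=1}^i \overline{\nu}_j = \sum_{j=s+1-i}^s \nu_j - \sum_{j=s+1-i}^s \omega_j \ge 0$ by the reformulated hypothesis, so $\overline{\omega} \ge \overline{\nu}$. Third, for containment, $\omega \supset \nu$ means $\omega_i \ge \nu_i$ for all $i$ (reading both as length-$s$ sequences), whence $\overline{\nu}_i = r - \nu_{s+1-i} \ge r - \omega_{s+1-i} = \overline{\omega}_i$ for all $i$, i.e. $\overline{\omega} \subset \overline{\nu}$.

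There is essentially no obstacle here; the only point requiring a moment's care is the bookkeeping with zero-padding and the index reversal $j \mapsto s+1-j$, together with the observation that in the dominance clause one must use $|\omega| = |\nu|$ to pass between the "head" inequalities $\sum_{j \le i}$ and the "tail" inequalities $\sum_{j > i}$. Everything else is a direct termwise computation.
\end{proof}
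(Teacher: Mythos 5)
Your proof is correct: the componentwise computation with zero-padding and the index reversal $j\mapsto s+1-j$, together with the passage from head sums to tail sums via $|\omega|=|\nu|$ in the dominance clause, is exactly the straightforward argument the paper has in mind when it omits the proof as obvious. Nothing further is needed.
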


\begin{lemma}\label{L:qsquareJack}
For $\nu\subset\la=(r^s)$, $\langle J_\omega
q_\nu,J_\lambda\rangle\neq0$ implies $\omega\geq\overline{\nu}$.
\end{lemma}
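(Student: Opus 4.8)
The plan is to expand $q_\nu = Q_{\nu_1}(\al^{-1})Q_{\nu_2}(\al^{-1})\cdots$ and apply the Pieri formula (Theorem \ref{T:Pieriformula}) repeatedly, keeping track of the dominance order at each step. Write $q_\nu(\al^{-1}) = \prod_{i} Q_{\nu_i}(\al^{-1})$ where $\nu = (\nu_1,\ldots,\nu_l)$ with $l = l(\nu) \le s$. Since $\langle J_\omega q_\nu, J_\la\rangle = \langle J_\omega, q_\nu^* J_\la\rangle$ and $q_\nu^* = \prod_i Q_{\nu_i}(\al^{-1})^*$, it suffices to understand which $J_\omega$ can arise when we successively apply the conjugate operators $Q_{\nu_i}(\al^{-1})^*$ to $J_\la$. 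By the Pieri formula, $Q_n(\al^{-1})^*$ removes a horizontal $n$-strip; and as noted in the remark following Theorem \ref{T:Pieriformula}, when the partition is of rectangular shape, $Q_n(\al^{-1})^*$ can \emph{only} remove the $n$ rightmost boxes of the last row.

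First I would peel off $Q_{\nu_1}(\al^{-1})^*$ (the largest part) from $J_\la$ with $\la = (r^s)$ rectangular: this forces the result to be $J_{(r^{s-1}, r-\nu_1)}$, i.e. we remove $\nu_1$ boxes from the bottom row. Iterating, after applying all of $Q_{\nu_l}(\al^{-1})^*, \ldots, Q_{\nu_1}(\al^{-1})^*$ in some order, the key observation is that each application removes a horizontal strip, and the set of all reachable partitions $\omega$ (those with $\langle J_\omega, q_\nu^* J_\la\rangle \ne 0$) is contained in the set of $\omega \subset \la$ obtainable by removing horizontal strips of sizes $\nu_1, \ldots, \nu_l$ in succession. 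The partition $\overline{\nu} = \la -' \nu$ is precisely the partition obtained from $\la = (r^s)$ by removing the upside-down $\nu$, which is itself a valid such sequence of horizontal-strip removals (each row of the upside-down $\nu$ is a horizontal strip). The claim $\omega \ge \overline{\nu}$ then amounts to: among all ways of removing the boxes, the upside-down placement is the dominance-minimal one. I would argue this by induction on $l(\nu)$, using that removing a horizontal $n$-strip from $\la$ always yields a partition $\ge \la -' (n)$ in dominance order (removing $n$ boxes as low and as far right as possible is dominance-minimal), combined with the monotonicity properties of the complement operation in Lemma \ref{L:complementofrectangular} (namely $\omega \supset \nu \Rightarrow \overline{\omega} \subset \overline{\nu}$ and the order-reversal).

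More carefully, the induction step runs as follows. Suppose $\langle J_\omega q_\nu, J_\la \rangle \ne 0$ with $\nu = (\nu_1, \ldots, \nu_l)$. Factor $q_\nu = Q_{\nu_l}(\al^{-1}) \cdot q_{\nu'}$ where $\nu' = (\nu_1, \ldots, \nu_{l-1})$. Then $\langle J_\omega Q_{\nu_l}(\al^{-1}) q_{\nu'}, J_\la\rangle \ne 0$ means there is an intermediate partition $\kappa$ with $\langle J_\kappa q_{\nu'}, J_\la \rangle \ne 0$ and $\langle J_\omega Q_{\nu_l}(\al^{-1}), J_\kappa\rangle \ne 0$, i.e. $\kappa - \omega$ is a horizontal $\nu_l$-strip. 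By the inductive hypothesis applied inside the rectangle $\la = (r^s)$ (with the partition $\nu'$, noting $l(\nu') \le s$ and $\nu'_1 \le \nu_1 \le r$), we get $\kappa \ge \la -' \nu' = \overline{\nu'}$. Removing a horizontal $\nu_l$-strip from $\kappa$ gives $\omega$; since the dominance-minimal way to remove $\nu_l$ boxes forming a horizontal strip is to take them from the lowest available rows in rightmost position, and $\overline{\nu'}$ has its "short" rows at the bottom, one checks $\omega \ge \overline{\nu'} -' (\nu_l) = \overline{\nu}$ (the last equality being a direct computation with the complement formula, since appending $\nu_l$ as a new smallest part to $\nu'$ corresponds to the appropriate adjustment of $\overline{\nu'}$). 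The main obstacle is this last dominance bookkeeping — verifying that removing the horizontal strip in the dominance-minimal fashion from $\overline{\nu'}$ lands exactly at or above $\overline{\nu}$, handling the case where $\overline{\nu'}$ is not itself rectangular so that the horizontal-strip removal has some genuine freedom. I expect this to reduce to the elementary fact that for any partition $\tau$ and the partition $\sigma$ obtained from $\tau$ by removing a horizontal $n$-strip, one has $\sigma \ge \tau - \text{(bottom-right } n \text{ boxes)}$ in dominance order, which follows by comparing partial sums row by row.
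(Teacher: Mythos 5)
Your overall plan --- induct on $l(\nu)$, peel off $Q_{\nu_l}(\al^{-1})^*$, use Pieri to produce an intermediate $\kappa$ with $\kappa\geq\overline{\nu'}$ and $\kappa-\omega$ a horizontal $\nu_l$-strip --- is viable and is organized differently from the paper's proof (the paper applies the conjugate one-row operators largest part first, observes that the $m$-th strip can only sit in the bottom $m$ rows of the rectangle, and then counts removed boxes to get the partial-sum inequalities directly). However, the step you yourself single out as the main obstacle is justified by a false principle, so as written there is a genuine gap. It is not true that ``removing $n$ boxes as low and as far right as possible is dominance-minimal,'' nor is your ``elementary fact'' that any horizontal $n$-strip removal $\sigma$ from $\tau$ satisfies $\sigma\geq\tau-(\text{bottom-right }n\text{ boxes})$: take $\tau=(3,2,1)$ and $n=1$; removing the corner box of the first row gives $\sigma=(2,2,1)$, while removing the bottom-right box gives $(3,2)$, and $(2,2,1)\not\geq(3,2)$. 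The monotonicity actually goes the other way: deleting boxes from higher rows decreases earlier partial sums, so the low/right removal is dominance-\emph{maximal}, not minimal, among single-strip removals. Note also that in your setting the target $\overline{\nu}$ is obtained from $\overline{\nu'}$ by deleting $\nu_l$ boxes from the lowest row of length $r$ (row $s-l+1$), which is the \emph{highest} row touched by the upside-down $\nu$, not ``the lowest available rows.''

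The good news is that the implication you need ($\kappa\geq\overline{\nu'}$ and $\kappa-\omega$ a horizontal $\nu_l$-strip imply $\omega\geq\overline{\nu}$) is true and has a short direct proof, so your induction can be repaired without any minimality principle. Since $\langle J_\kappa q_{\nu'},J_\la\rangle\neq0$ forces $\kappa\subset(r^s)$, the inequality $\kappa_1+\cdots+\kappa_{s-l+1}\geq\overline{\nu'}_1+\cdots+\overline{\nu'}_{s-l+1}=(s-l+1)r$ together with $\kappa_j\leq r$ gives $\kappa_j=r$ for $j\leq s-l+1$; the interlacing condition $\omega_j\geq\kappa_{j+1}$ for a horizontal strip then keeps $\omega_j=r$ for $j\leq s-l$, which settles the dominance inequalities for $k\leq s-l$ (where $\overline{\nu}$ also has full rows). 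For $k\geq s-l+1$, use only that the strip removes at most $\nu_l$ boxes from the first $k$ rows: $\omega_1+\cdots+\omega_k\geq\kappa_1+\cdots+\kappa_k-\nu_l\geq\overline{\nu'}_1+\cdots+\overline{\nu'}_k-\nu_l=\overline{\nu}_1+\cdots+\overline{\nu}_k$, since $\overline{\nu}$ and $\overline{\nu'}$ differ only in row $s-l+1$, by exactly $\nu_l$ boxes. With that substitution your argument closes; without it, the crucial dominance step rests on a statement that is false.
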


\begin{proof} $q_\nu^*.J_\la=q_{\nu_s}^*\cdots q_{\nu_1}^*.J_\la$, the
action of $q_{\nu_s}^*\cdots q_{\nu_1}^*$ on $J_\la$ has the effect
of remove $v_1+\cdots+v_i$ squares in the last $i$ row(s) of $\la$.
Here we mean that setting $q_{\nu_s}^*\cdots
q_{\nu_1}^*.q_\la=\sum_\omega C_\omega J_\omega$, if
$C_\omega\neq0$, then $\omega$ is coming from $\la$ by removing
$v_1+\cdots+v_i$ squares in the last $i$ row(s) of $\la$. Thus in
the first $s-i$ row(s), there are at most $v_{i+1}+\cdots+v_s$
squares that has been removed. Thus this kinds of $\omega$ satisfies
$\omega_1+\cdots+\omega_{s-i}\geq
r(s-i)-(v_{i+1}+\cdots+v_s)=(r-v_s)+\cdots+(r-v_{i+1})$, which means
$\omega\geq\overline{\nu}$.
\end{proof}

The following proposition claims the correctness of Stanley's L-R
conjecture in the case that $\la$ is of rectangular shape.
\begin{theorem}\label{T:Jack*actsonsquareJack}
For rectangular partition $\la=(r^s)$, $\langle J_\mu
J_\nu,J_\la\rangle\neq0$ if and only if $\nu\subset\la$ and
$\mu=\overline{\nu}$. Furthermore, in this case we have
$\langle J_\mu J_\nu,J_\la\rangle=h_*^\mu(\mu)h^*_\nu(\nu)h_1^\la(\mu)h_2^\la(\nu),$ where
\begin{align*}
&h_1^\la(\mu)=\prod_{(i,j)\in\mu}[(\mu'_j-i)+\al(r-j+1)],\\
&h_2^\la(\nu)=\prod_{(i,j)\in\nu}[(s-i+1)+\al(j-1)].
\end{align*}
\end{theorem}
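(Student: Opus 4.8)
The statement has two parts --- a support claim and an explicit value --- and the plan is to reduce everything to one-row factors and then apply Stanley's Pieri rule (Theorem~\ref{T:Pieriformula}) iteratively along a chain of partitions; throughout, hook lengths are computed with parameter $\al$ as in Section~4. The basic device: write $J_\nu=h^*_\nu(\nu)Q_\nu$ with $Q_\nu=q_\nu+\sum_{\omega>\nu}a_{\nu\omega}q_\omega$ (Lemma~\ref{L:triangular}), and observe that $q_\omega=Q_{\omega_1}\cdots Q_{\omega_k}$ equals $J_{(\omega_1)}\cdots J_{(\omega_k)}$ up to the nonzero scalar $\prod_i h^*_{(\omega_i)}((\omega_i))$. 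Thus $\langle J_\mu q_\omega,J_\la\rangle$ is a scalar times $\langle J_\mu,\,J_{(\omega_1)}^{*}\cdots J_{(\omega_k)}^{*}.J_\la\rangle$, and iterating Pieri shows $J_{(\omega_1)}^{*}\cdots J_{(\omega_k)}^{*}.J_\la$ lies in the span of the $J_\rho$ with $\rho\subset\la$. Hence $\langle J_\mu q_\omega,J_\la\rangle\ne0\Rightarrow\mu\subset\la$, so $\langle J_\mu J_\nu,J_\la\rangle\ne0\Rightarrow\mu\subset\la$, and by the symmetry $\mu\leftrightarrow\nu$ also $\nu\subset\la$. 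In particular $l(\nu)\le s$, so every $\omega\ge\nu$ has $l(\omega)\le s$; and among such $\omega$ only those with $\omega_1\le r$ (i.e.\ $\omega\subset\la$) contribute, since if $\omega_1>r$ then $Q_{\omega_1}^{*}$, applied last among the commuting factors of $q_\omega^{*}$, annihilates every $J_\rho$ with $\rho\subset\la$ and hence $q_\omega^{*}.J_\la$.

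For the ``only if'' direction, let $\langle J_\mu J_\nu,J_\la\rangle\ne0$; writing it as $h^*_\nu(\nu)\sum_{\omega\ge\nu}a_{\nu\omega}\langle J_\mu q_\omega,J_\la\rangle$ with $a_{\nu\nu}=1$, some summand is nonzero, with $\omega\subset\la$, so Lemma~\ref{L:qsquareJack} gives $\mu\ge\overline\omega\ge\overline\nu$ (the second inequality from $\omega\ge\nu$ and Lemma~\ref{L:complementofrectangular}). Interchanging $\mu$ and $\nu$ yields $\nu\ge\overline\mu$, hence $\overline\nu\ge\overline{\overline\mu}=\mu$ by Lemma~\ref{L:complementofrectangular}. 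Since $|\mu|=rs-|\nu|=|\overline\nu|$, antisymmetry of dominance forces $\mu=\overline\nu$.

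For the value (which also yields the ``if'' direction) take $\mu=\overline\nu$. For $\omega>\nu$: if $\omega\subset\la$ then $\overline\omega>\overline\nu$, so $\langle J_{\overline\nu}q_\omega,J_\la\rangle=0$ by Lemma~\ref{L:qsquareJack}; if $\omega\not\subset\la$ then $\omega_1>r$ and again $\langle J_{\overline\nu}q_\omega,J_\la\rangle=0$. Hence only $\omega=\nu$ survives and $\langle J_{\overline\nu}J_\nu,J_\la\rangle=h^*_\nu(\nu)\langle J_{\overline\nu},\,q_\nu^{*}.J_\la\rangle$, where $q_\nu^{*}=Q_{\nu_s}^{*}\cdots Q_{\nu_1}^{*}$ (padding $\nu$ with zeros to length $s$). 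Applying these commuting operators in the order $\nu_1,\dots,\nu_s$, I would show that the only chain $\la=\kappa^{(0)}\supset\kappa^{(1)}\supset\cdots\supset\kappa^{(s)}=\overline\nu$ with each $\kappa^{(j-1)}-\kappa^{(j)}$ a horizontal $\nu_j$-strip is $\kappa^{(j)}=(r^{\,s-j},r-\nu_j,r-\nu_{j-1},\dots,r-\nu_1)$, the strip at step $j$ filling row $s-j+1$: exactly as in the proof of Lemma~\ref{L:qsquareJack}, after $j$ strip-removals from $(r^s)$ the top $s-j$ rows are still of length $r$ and $\nu_1+\cdots+\nu_j$ boxes have left the bottom $j$ rows, while $\overline\nu$, being contained in $\kappa^{(j)}$, is itself missing exactly $\nu_1+\cdots+\nu_j$ boxes from its bottom $j$ rows, forcing agreement there. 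Therefore $\langle J_{\overline\nu}J_\nu,J_\la\rangle=h^*_\nu(\nu)\big(\prod_{j=1}^{s}e_j\big)\langle J_{\overline\nu},J_{\overline\nu}\rangle$, where $e_j$ is the coefficient of $J_{\kappa^{(j)}}$ in $Q_{\nu_j}^{*}.J_{\kappa^{(j-1)}}$; by Theorem~\ref{T:Pieriformula} (with $\al^{-1}$ replaced by $\al$), and using $Q_n=(\al^{\,n}n!)^{-1}J_{(n)}$ so that the Pieri prefactor $n!\,\al^{\,n}$ cancels, $e_j$ becomes a ratio of products of upper and lower hook lengths over the based and un-based squares of the pair $\kappa^{(j)}\subset\kappa^{(j-1)}$ divided by $\langle J_{\kappa^{(j)}},J_{\kappa^{(j)}}\rangle=h^*_{\kappa^{(j)}}(\kappa^{(j)})h_*^{\kappa^{(j)}}(\kappa^{(j)})$.

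It remains to multiply out $h^*_\nu(\nu)\big(\prod_j e_j\big)\langle J_{\overline\nu},J_{\overline\nu}\rangle$ and identify it with $h_*^\mu(\mu)h^*_\nu(\nu)h_1^\la(\mu)h_2^\la(\nu)$ for $\mu=\overline\nu$. The factors $\langle J_{\kappa^{(j)}},J_{\kappa^{(j)}}\rangle$ for $j=1,\dots,s-1$ survive in the denominator, the $j=s$ one cancelling against $\langle J_{\overline\nu},J_{\overline\nu}\rangle$; and since consecutive $\kappa^{(j-1)},\kappa^{(j)}$ differ only in row $s-j+1$ beyond column $r-\nu_j$, most of the upper/lower hook-length data cancels telescopically down the chain. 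What is left reassembles, after a column-by-column inspection of arm and leg lengths of the staircase partitions $\kappa^{(j)}$, into $h_*^{\overline\nu}(\overline\nu)\,h_1^\la(\overline\nu)\,h_2^\la(\nu)$, where $h_1^\la(\overline\nu)=\prod_{(i,j)\in\overline\nu}[(\overline\nu'_j-i)+\al(r-j+1)]$ collects the surviving ``mixed'' upper hooks (arm measured in $\la$, leg in $\overline\nu$) over the un-based columns and $h_2^\la(\nu)=\prod_{(i,j)\in\nu}[(s-i+1)+\al(j-1)]$ collects the contributions of the $s$ peeled strips. Every factor here is a nonzero polynomial in $\al$ --- for each box the displayed constant is a nonnegative integer and the coefficient of $\al$ is a positive integer --- so the product is nonzero, which also settles the ``if'' direction. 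The main obstacle is precisely this last reconciliation: the chain, the Pieri coefficients, and their partial telescoping are all forced, but checking that the leftover hook data is exactly $h_*^{\overline\nu}(\overline\nu)h^*_\nu(\nu)h_1^\la(\overline\nu)h_2^\la(\nu)$ requires careful, if routine, bookkeeping of arms and legs across the $\kappa^{(j)}$, handled most cleanly one column at a time.
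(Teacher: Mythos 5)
Your proposal follows essentially the same route as the paper's proof: expand $J_\nu$ in the $q_\omega$ basis, use Lemma~\ref{L:qsquareJack} together with Lemma~\ref{L:complementofrectangular} and the symmetry in $\mu,\nu$ to force $\mu=\overline{\nu}$, observe that only the $\omega=\nu$ term survives, and then evaluate $q_\nu^*.J_\la$ by iterating Stanley's Pieri rule along the forced chain of staircase partitions. Your treatment is if anything a bit more careful than the paper's (e.g.\ the explicit handling of $\omega\not\subset\la$ and the uniqueness of the chain), and like the paper it leaves the final hook-length bookkeeping as a routine verification.
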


\begin{remark}
Note that except the familiar terms $h_*^\mu(\mu)$ and
$h^*_\nu(\nu)$, there are two \emph{new} terms $h_1^\la(\mu), h_2^\la(\nu)$ in $C_{\mu\nu}^\la(\al)$. For instance, let us see $\la=(6^5), \nu=(6,5,3,3)$ as in Example \ref{E:complement}. The partition $\mu=\la-'\nu$ lies above the bold line in Fig.2 (a). Fig.2 (b) shows the familiar terms, the box $a$ in $\mu$ contributes $2+\al$, and box $b$  in
$(6^5)-\mu$ contributes $3+4\al$. Fig.2 (c) shows the \emph{new} terms, where the box $a$ contributes $1+5\al$ and the box $b$ contributes $5+2\al$. So, in (b) or (c), we associate a linear polynomial of
$\al$ to each box, the number of the boxes where the solid line passes
horizontally (resp. vertically) is the coefficient of $\al$
( resp. the constant).
\end{remark}

\begin{proof} Set $J_\nu=\sum_{\omega\geq\nu}c_\omega q_\omega$, $\langle
J_\mu J_\nu,J_\la\rangle\neq0$ implies at least one of $\langle J_\mu
q_\omega,J_\la\rangle\neq0$ for some $\omega\geq\nu$, thus
$\mu\geq\overline{\omega}\geq\overline{\nu}$ by Lemma
\ref{L:qsquareJack} and Lemma \ref{L:complementofrectangular}.
Similarly, we have $\nu\geq\overline{\mu}$, which leads to
$\overline{\nu}\geq\mu$ by Lemma \ref{L:complementofrectangular}.
Now $\mu=\overline{\nu}$ is set up.

Now we can set $J_\nu^*.J_\la=C_{\la,\nu}J_{\overline{\nu}}$, and then $\langle J_\mu J_\nu,J_\la\rangle=\langle
J_{\overline{\nu}},J_\nu^*. J_\la\rangle=C_{\la,\nu}\langle
J_{\overline{\nu}},J_{\overline{\nu}}\rangle=C_{\la,\nu}h_{\overline{\nu}}^*(\overline{\nu})h_*^{\overline{\nu}}(\overline{\nu})$. We need to
determine the coefficient $C_{\la,\nu}$. We can first write
$J_\nu^*=\sum_{\omega\geq\nu}c_\omega q_\omega^*$ with $c_\nu=h^*_\nu(\nu)$ by Lemma \ref{L:triangular}, then the only
term that counts is $c_\nu q_\nu^*$ by Lemma \ref{L:qsquareJack}. The action of
$q_{\nu_1}^*$ on $J_{\la}$ removes $\nu_1$ squares in the last row
of $\la$, resulted in, say $C_{\la,\nu_1}J_{\overline{(\nu_1)}}$,
where $C_{\la,\nu_1}$ is explicitly known by Stanley's Pieri formula
for Jack function. Then we apply the action $q_{\nu_2}^*$ on
$J_{\overline{(\nu_1)}}$. There maybe more than one ways to remove
$\nu_2$ squares horizontally from $\overline{(\nu_1)}$, but the only
way that contributes to $J_{\overline{\nu}}$ is to remove $\nu_2$
squares in the second last row, which results in, say
$C_{\la,(\nu_1,\nu_2)}J_{\overline{(\nu_1,\nu_2)}}$, with
$C_{\la,(\nu_1,\nu_2)}$, which is known by Stanley's formula. Continuing
this process, we find that $C_{\la,\nu}=h^*_\nu(\nu)\cdot(C_{\la, \nu_1}C_{\la, (\nu_1,\nu_2)}\cdots)=h^*_\nu(\nu)\cdot(h_1^\la(\mu)h_2^\la(\nu)/h_{\overline{\nu}}^*(\overline{\nu}))$.
\end{proof}

\begin{remark}
1. It is interesting that the expression is symmetric in
$\mu$ and $\nu$, but the symmetry is not at all clear
from this form of the formula.\\
2. By the same method, we can easily generalize this theorem to
Macdonald case using the Pieri formula.
\end{remark}

\begin{corollary}\label{C:StanleyL-R}
For a marked rectangular partition $\la=(r^{s-1},r-n)$, the matrix coefficient
$\langle
J_\mu J_\nu,J_\la\rangle\neq0$ if and only if
$((r^s)-'\nu)-\mu$ is a horizontal $n$-strip. In this
case $\langle J_\mu J_\nu,J_\la\rangle$ is a product of nonzero
linear polynomials in $\mathbb{Z}_{\geq0}[\al]$, Explicitly, denoting $\overline{\nu}=(r^s)-'\nu$, we have
\begin{align}
\langle J_\mu
J_\nu,J_\la\rangle=&\frac{(r-n)!}{r(r-1)\cdots(n+1)\prod_{i=0}^{n-1}[s+i\al]}\cdot\\\nonumber
&\frac{h_*(\overline{\nu}_u)h^*(\overline{\nu}_b)}{h_*(\mu_u)h^*(\mu_b)}n!\al^n\langle
J_{\overline{\nu}}J_\nu,J_{r^s}\rangle,
\end{align}
where  
$\mu_b$ and $\overline{\nu}_b$ ( resp. $\mu_u$ and $\overline{\nu}_u$) are the sets of based (resp. un-based) boxes of $\mu$ and $\overline{\nu}$ respectively for the pair $\mu\subset\overline{\nu}$.
\end{corollary}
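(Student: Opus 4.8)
The plan is to reduce Corollary~\ref{C:StanleyL-R} to the rectangular case already handled in Theorem~\ref{T:Jack*actsonsquareJack}, using the marked-rectangle Pieri step. First I would observe that $\la=(r^{s-1},r-n)$ is obtained from the rectangle $(r^s)$ by deleting a horizontal $n$-strip in the last row, so $J_\la$ can be produced from $J_{(r^s)}$ by applying $q_n^*$ (or $J_n^*$) up to a scalar; dually, $\langle J_\mu J_\nu,J_\la\rangle$ should be expressible by inserting a complete symmetric function $q_n$ into the rectangular coefficient. Concretely I would write $J_\nu^*.J_\la=\sum_\omega c_\omega\, q_\omega^*.J_\la$ as in the proof of Theorem~\ref{T:Jack*actsonsquareJack}, and run the same row-by-row peeling argument: acting with $q_{\nu_1}^*,q_{\nu_2}^*,\dots$ removes $\nu_i$ boxes from successive rows counted from the bottom, and by the dominance bookkeeping of Lemma~\ref{L:qsquareJack} the only surviving $\mu$ are those with $\mu\subset\overline{\nu}=(r^s)-'\nu$ and $\overline{\nu}-\mu$ a horizontal $n$-strip (the "missing" $n$ boxes that were never in $\la$ to begin with). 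This pins down the support condition in the statement.

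Next I would compute the coefficient. The clean way is to use the identity, valid for the rectangle, $J_\nu^*.J_{(r^s)}=C_{(r^s),\nu}J_{\overline{\nu}}$ with $C_{(r^s),\nu}$ read off from Theorem~\ref{T:Jack*actsonsquareJack}, together with a single Pieri step relating $J_\la$ to $J_{(r^s)}$. Precisely, Stanley's Pieri formula (Theorem~\ref{T:Pieriformula}) gives $\langle J_{(r^{s-1},r-n)}\,J_n,\,J_{(r^s)}\rangle=n!\al^{-n}h^*_{(r^s)}((r^s)_u)h_*^{(r^s)}((r^s)_b)h^*_{(r^{s-1},r-n)}(\cdot)h_*^{(r^{s-1},r-n)}(\cdot)$ — for the rectangle the based/un-based split over the last-row strip collapses to the explicit product $\tfrac{r!}{(r-n)!\cdots}\prod_{i=0}^{n-1}[s+i\al]$ after converting hook lengths, which is exactly the reciprocal of the prefactor in the claimed formula. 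Then the general bilinear identity $\langle J_\mu J_\nu,J_\la\rangle=\langle J_\mu J_\nu J_n,\,\text{(something)}\rangle$ is replaced by the direct route: expand $J_\la$ in the basis dual to $\{J_n J_\rho\}$, or equivalently use $\langle J_\mu J_\nu,J_\la\rangle=\langle J_\nu^*.J_\la,\,J_\mu\rangle$ and substitute the peeling computation, which yields $\langle J_\mu J_\nu,J_\la\rangle = (\text{Pieri factor for removing the }n\text{-strip }\overline{\nu}-\mu)\cdot \langle J_{\overline{\nu}}J_\nu,J_{(r^s)}\rangle / \langle J_{\overline{\nu}},J_{\overline{\nu}}\rangle$, and the self-pairing $\langle J_{\overline{\nu}},J_{\overline{\nu}}\rangle=h_*^{\overline{\nu}}(\overline{\nu})h^*_{\overline{\nu}}(\overline{\nu})$ is then reorganized via the hook-length splitting $h^*_{\overline{\nu}}(\overline{\nu})=h^*(\overline{\nu}_b)h^*(\overline{\nu}_u)$ etc., so that the un-removed part cancels against the analogous factors for $\mu$, leaving precisely $\tfrac{h_*(\overline{\nu}_u)h^*(\overline{\nu}_b)}{h_*(\mu_u)h^*(\mu_b)}n!\al^n$ times the rectangular coefficient and the Pieri prefactor.

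The main obstacle I anticipate is the careful matching of the hook-length products across the two normalizations: Theorem~\ref{T:Pieriformula} is stated for the integral Jack functions $J$ with factors $h^*_\la(\la_u)h_*^\la(\la_b)h^*_\mu(\mu_b)h_*^\mu(\mu_u)$, and one must convert between these and the self-pairing $\langle J_{\overline\nu},J_{\overline\nu}\rangle$ as well as the rectangular $C_{(r^s),\nu}$, keeping track of exactly which boxes are "based" (lie in a column meeting the strip) for the pair $\mu\subset\overline\nu$ versus for $(r^{s-1},r-n)\subset(r^s)$. Getting the prefactor $\tfrac{(r-n)!}{r(r-1)\cdots(n+1)\prod_{i=0}^{n-1}[s+i\al]}$ to come out on the nose requires evaluating the rectangular hook lengths along the last row: for $s=(s_0,j)$ in the relevant columns $h^*_{(r^s)}$ contributes $1+\al(r-j)$-type factors whose product over $j=r-n+1,\dots,r$ telescopes, and likewise the $h_*$ factors give the falling factorial $r(r-1)\cdots(n+1)$ and the $\prod_{i=0}^{n-1}[s+i\al]$ arm contributions. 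Once those two explicit products are verified and the $n!\al^{-n}$ from Pieri is reconciled with the $n!\al^n$ in the statement (the sign/inversion of $\al$ coming from the switch, noted in the excerpt, between the $\al^{-1}$-normalization of Sections 2–3 and the $\al$-normalization of Section 4), everything else is formal, and positivity in $\mathbb{Z}_{\ge0}[\al]$ follows because each surviving factor is a positive-integer-coefficient linear polynomial in $\al$, as is $\langle J_{\overline\nu}J_\nu,J_{(r^s)}\rangle$ by Theorem~\ref{T:Jack*actsonsquareJack}.
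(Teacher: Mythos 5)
Your proposal follows essentially the same route as the paper: write $J_\la$ as an explicit multiple of $J_n^*.J_{(r^s)}$ (the unique Pieri removal from the rectangle), move $J_\nu$ across as $J_\nu^*$ so that Theorem~\ref{T:Jack*actsonsquareJack} collapses $J_\nu^*.J_{(r^s)}$ to a multiple of $J_{\overline{\nu}}$, and then read off the coefficient of $J_{\overline{\nu}}$ in $J_nJ_\mu$ from the Pieri rule, which is exactly the paper's three-line computation. The only caveats are bookkeeping ones you already flag yourself (the prefactor is $\langle J_\la,J_\la\rangle/\langle J_\la J_n,J_{(r^s)}\rangle$ rather than the bare reciprocal of the Pieri pairing, and Lemma~\ref{L:qsquareJack} should be invoked for the rectangle $(r^s)$, not for $\la$ directly), so the argument is correct in substance.
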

\begin{proof}
First we have
\begin{align*}
J_n^*.J_{r^s}&=\frac{r\al\cdots(n+1)\al}{(r-n)\al\cdots1\al}[s+(n-1)\al]\cdots[s+0\al]J_\la,\\
J_\la&=\frac{(r-n)!}{r\cdots
(n+1)\prod_{i=0}^{n-1}(s+i\al)}J_n^*.J_{r^s},\\
 \langle J_\mu
J_\nu,J_\la\rangle&=\frac{(r-n)!}{r(r-1)\cdots(n+1)\prod_{i=0}^{n-1}[s+i\al]}\langle
J_nJ_{\mu},J_\nu^* J_{r^s}\rangle.
\end{align*}
By Theorem \ref{T:Jack*actsonsquareJack}, it does not vanish if and
only if $((r^s)-'\nu)-\mu$ is a horizontal $n$-strip. Then
the coefficient of $J_{\overline{\nu}}$ in $J_nJ_\mu$ is
$\frac{h_*(\overline{\nu}_u)h^*(\overline{\nu}_b)}{h_*(\mu_u)h^*(\mu_b)}n!\al^n$
by Pieri formula of Jack functions \cite{S}. Combining this with
theorem \ref{T:Jack*actsonsquareJack} gives the corollary.
\end{proof}

\begin{remark} This proposition implies that Conjecture \ref{C:Stanley's conjecture} is true if $\la$ is a rectangle with part of the last row removed. We can also use this method to treat the case of rectangular partitions with part of the last column removed, and prove that Conjecture \ref{C:Stanley's conjecture} is true if $\la=((k+1)^sk^t)$ with $k, s, t\in\mathbb{Z}_{\geq0}$.
\end{remark}
We can generalize Theorem \ref{T:Jack*actsonsquareJack} to the
following situation, which is also useful when we consider the
$\nu=(2,1)$ case of Stanley's conjecture.

\begin{proposition}\label{P:Jstar remove in one corner}
Assume that $\langle J_\nu^*.J_\la,J_\mu\rangle\neq 0$, and $\la-\mu$ is
an upside-down $\omega$ of a corner of $\la$ with $|\omega|=|\nu|$, then $\omega=\nu$ and $\langle
J_\la,J_\mu J_\nu\rangle$ is the product of linear polynomials of
$\al$ with non-negative integer coefficients.
\end{proposition}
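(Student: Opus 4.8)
The plan is to reduce the statement to a single application of Theorem~\ref{T:Jack*actsonsquareJack} combined with the Pieri rule of Theorem~\ref{T:Pieriformula}. Write $\la=(a_1^{n_1}\cdots a_s^{n_s})$ in the notation introduced before Definition~\ref{bottomnotation}, and suppose that $\la-\mu$ is an upside-down $\omega$ of the $i$th corner $C_i(\la)$, which has rectangular shape $R=((a_i-a_{i+1})^{n_i})$. The key observation is that, since the removed boxes all live in the columns of the single corner $C_i(\la)$, the based/un-based decomposition of $\la$ for the pair $\mu\subset\la$ is supported inside that corner together with the full rows and columns it determines: a box of $\la$ is based exactly when its column meets $\la-\mu$, and those columns are precisely the columns occupied by $C_i(\la)$. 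Hence expanding $J_\nu^*=\sum_{\rho\geq\nu}c_\rho q_\rho^*$ by Lemma~\ref{L:triangular} with $c_\nu=h_\nu^*(\nu)$ and acting on $J_\la$, the iterated removal $q_{\nu_{l(\nu)}}^*\cdots q_{\nu_1}^*$ (each step a horizontal strip by Theorem~\ref{T:Pieriformula}) can only feed $J_\mu$ by stripping successive rows from the bottom of the corner $C_i(\la)$; this forces $\omega=\nu$ and isolates the contribution $c_\nu q_\nu^*.J_\la$, exactly as in the proof of Theorem~\ref{T:Jack*actsonsquareJack}.

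Next I would compute the coefficient $C$ in $J_\nu^*.J_\la = C\, J_\mu + (\text{lower terms that do not contribute})$. Factor $J_\la$ as $J_{R}\,$--shaped data sitting above the corner: more precisely, by the Pieri formula the scalar $\langle J_\mu J_\nu, J_\la\rangle = \langle J_{\overline\nu\,'}\cdots\rangle$ can be written as a telescoping product $h_\nu^*(\nu)\cdot C_{\la,\nu_1}C_{\la,(\nu_1,\nu_2)}\cdots$, where each $C_{\la,(\nu_1,\dots,\nu_j)}$ is a ratio of lower/upper hook products supplied by Theorem~\ref{T:Pieriformula} applied to the partition obtained after removing the first $j$ rows of the corner. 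Because all of this happens inside one rectangular corner, the same bookkeeping as in Theorem~\ref{T:Jack*actsonsquareJack} applies verbatim with $r$ replaced by $a_i-a_{i+1}$ and $s$ replaced by $n_i$, except that the hook lengths now carry the extra arm/leg contributions coming from the parts of $\la$ to the left of and below the corner. Each such hook length $h_*^\la(\cdot)$ or $h^*_\la(\cdot)$ is, by definition, of the form $\al^{-1}(\text{non-negative integer})+(\text{positive integer})$ — equivalently, after clearing the normalization and passing to the $\al$-convention of Section~4, a linear polynomial in $\al$ with non-negative integer coefficients and nonzero constant term — so the product $\langle J_\la, J_\mu J_\nu\rangle$ is a product of such linear polynomials.

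The main obstacle I anticipate is the bookkeeping of the based/un-based boxes when the corner $C_i(\la)$ is not the last corner, i.e. when there are nonzero parts of $\la$ strictly below the corner. In that case the columns of $C_i(\la)$ extend downward through those lower rows, so the un-based/based split of $\la$ and of $\mu$ involves boxes outside the corner, and one must check that Theorem~\ref{T:Jack*actsonsquareJack} — which was stated for a genuine rectangle $\la=(r^s)$ — still yields the telescoping product in this relative setting. I would handle this by an inductive peeling argument: remove the parts of $\la$ below row $n_1+\cdots+n_i$ first (they are untouched by $J_\nu^*$ since $\nu$ fits inside the corner's rows), apply the rectangular result of Theorem~\ref{T:Jack*actsonsquareJack} to the corner sitting on top, and then reattach the lower rows, tracking how each reattached row augments the leg-lengths (hence the constant terms) of the hooks while never making any factor vanish. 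The positivity and nonvanishing of each linear factor is then immediate from the explicit form of the upper and lower hook-lengths.
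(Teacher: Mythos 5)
Your derivation of the equality $\omega=\nu$ does not go through, and that equality is the essential content of the proposition. Expanding $J_\nu^*=\sum_{\rho\geq\nu}c_\rho q_\rho^*$ (Lemma \ref{L:triangular}), the hypothesis only gives you \emph{some} $\rho\geq\nu$ with $\langle q_\rho^*.J_\la,J_\mu\rangle\neq0$; the horizontal-strip chain analysis then shows that in the first $k$ removals only boxes in the bottom $k$ rows of the corner can be deleted, which yields $\omega\geq\rho$ and hence only the inequality $\omega\geq\nu$, not equality. It is not true that the removals must strip whole rows of the corner one at a time: if, say, $\la-\mu$ is the upside-down $(2,2)$ of a corner, then $q_{(2,1,1)}^*$ also has a nonzero matrix element to $J_\mu$ (remove one box from the bottom row, then a $2$-strip meeting both rows, then one box), so several $q_\rho^*$ with $\nu\leq\rho\leq\omega$ contribute, and to exclude $\nu<\omega$ one must control cancellations in the sum (e.g.\ for $\nu=(1,1)$, $\omega=(2)$ both $q_{(2)}^*$ and $q_{(1,1)}^*$ act nontrivially, and the vanishing of $\langle J_{(1,1)}^*.J_\la,J_\mu\rangle$ is a genuine cancellation). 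Note also that the device used in Theorem \ref{T:Jack*actsonsquareJack} --- exchanging the roles of $\mu$ and $\nu$, which are both complements inside the rectangle --- is unavailable here, since $\nu$ is small and $\mu$ is large. The paper closes this gap by conjugation duality: $\langle J_\la,J_\mu J_\nu\rangle\neq0$ is equivalent to $\langle J_{\la'},J_{\mu'}J_{\nu'}\rangle\neq0$, and $\la'-\mu'$ is an upside-down $\omega'$ of a corner of $\la'$, so the same dominance argument applied to the conjugates gives $\omega'\geq\nu'$, i.e.\ $\nu\geq\omega$, whence $\omega=\nu$. Your proposal has no substitute for this step, and your statement that the stripping ``isolates the contribution $c_\nu q_\nu^*.J_\la$'' is only valid \emph{after} $\omega=\nu$ is known (then $\rho\geq\nu=\omega\geq\rho$ forces $\rho=\nu$), so as written the argument is circular at exactly this point.

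For the second assertion, once $\omega=\nu$ is in hand your telescoping Pieri computation is essentially what the paper intends (it simply says the rest is as in Theorem \ref{T:Jack*actsonsquareJack}), and tracking the based/un-based boxes lying outside the corner is the right bookkeeping; but each Pieri coefficient is a \emph{ratio} of upper and lower hook products, so you still need to exhibit the cancellation that turns the accumulated product into a genuine product of linear factors with nonnegative integer coefficients, as is done explicitly in the rectangular case via the factor $h_{\overline{\nu}}^*(\overline{\nu})h_*^{\overline{\nu}}(\overline{\nu})$ from $\langle J_{\overline{\nu}},J_{\overline{\nu}}\rangle$. That part of your plan is plausible but not yet a proof.
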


\begin{proof} As the proof of Theorem \ref{T:Jack*actsonsquareJack}, we
have $\omega\geq\nu$. Note that $\langle J_\la,J_\mu
J_\nu\rangle=\langle J_\nu^*.J_\la,J_\mu\rangle\neq 0$ is equivalent
to $\langle J_{\la'},J_{\mu'} J_{\nu'}\rangle\neq 0$, and that $\la'-\mu'$ is
an upside-down $\omega'$ of a corner of $\la'$ with $|\omega'|=|\nu'|$, thus we also
have $\omega'\geq\nu'$. Subsequently $\omega=\nu$.  The proof for
the other statement is the same as that of Theorem
\ref{T:Jack*actsonsquareJack}.
\end{proof}

\subsection{$v=(2,1)$ case }
 The following
proposition confirms the Stanley's conjecture for the case of
$\nu=(n,1)$ subject to a simpler technical condition, but without the condition
$C_{\mu\nu}^\la=1$.
\begin{proposition}\label{P:stanley1}
 For $\nu=(n,1)$, $\langle J_\mu J_\nu, J_\la\rangle$ is a product of linear polynomials of
 $\al$ with nonnegative integer coefficients
  if the boxes
of $\la-\mu$ lie on two rows of $\la$. In particular, for $\nu=(2,1)$, $\langle
J_\mu J_\nu, J_\la\rangle\in \mathbb{Z}_{\geq0}[\al]$.
\end{proposition}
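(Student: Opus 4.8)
The plan is to reduce the computation of $\langle J_\mu J_\nu,J_\la\rangle$ for $\nu=(n,1)$ to the rectangular and corner cases already handled in Theorem~\ref{T:Jack*actsonsquareJack} and Proposition~\ref{P:Jstar remove in one corner}. The hypothesis is that $\la-\mu$ occupies only two rows of $\la$, say rows $p$ and $q$ with $p<q$; since $|\la-\mu|=n+1$ and $J_\nu^*$ removes a ``broken'' configuration, the geometry is quite restricted. First I would observe that $J_{(n,1)}^*$ can be written (via Lemma~\ref{L:triangular} and the generating function for the $Q_n$'s) in terms of the $q_\omega^*$ with $\omega\geq(n,1)$, i.e. $q_{(n,1)}^*$ and $q_{(n+1)}^*$; equivalently one expands $J_{(n,1)}=h^*_{(n,1)}(n,1)\,q_{(n,1)}+a\,q_{(n+1)}$ for a known rational $a$. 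So $J_{(n,1)}^*.J_\la$ is a combination of $q_{(n+1)}^*.J_\la$ and $q_1^*q_n^*.J_\la$, both of which are governed by Stanley's Pieri rule (Theorem~\ref{T:Pieriformula}) applied twice.

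The key step is then a case analysis on where the two rows $p,q$ sit relative to the corners of $\la$. If both affected rows lie in a single corner of $\la$ (so $\la-\mu$ is an upside-down partition $\omega$ of that corner), Proposition~\ref{P:Jstar remove in one corner} applies directly: $\omega=(n,1)$ and the coefficient is a product of nonnegative-integer linear polynomials. If the two rows straddle two different corners, I would peel off one box at a time: applying $q_1^*$ first removes a single box from the last available row of some corner, landing (up to a known Pieri scalar in $\mathbb{Z}_{\geq0}[\al]$) on a new partition $\la^{(1)}$; then $q_n^*$ removes a horizontal $n$-strip, and because $\la-\mu$ lives on only two rows, the strip is forced to lie along one row, again producing a Pieri scalar in $\mathbb{Z}_{\geq0}[\al]$. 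Tracking which intermediate terms actually survive (only those that descend to $\mu$ contribute) uses the dominance/complement lemmas exactly as in the proof of Theorem~\ref{T:Jack*actsonsquareJack}, and multiplying the surviving Pieri coefficients together with $\langle J_\mu,J_\mu\rangle=h^*_\mu(\mu)h_*^\mu(\mu)$ gives the claimed product form. Throughout one checks that the rational prefactor $a$ from the $q_{(n+1)}$ term either does not contribute (wrong shape) or combines with the $q_{(n,1)}$ term so that the final answer is still a polynomial with nonnegative integer coefficients.

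For the statement ``in particular $\langle J_\mu J_{(2,1)},J_\la\rangle\in\mathbb{Z}_{\geq0}[\al]$'' I would argue that for $\nu=(2,1)$ the set difference $\la-\mu$ has only three boxes, so it automatically lies on at most two rows \emph{unless} it occupies three distinct rows in a single column; but a vertical $3$-strip cannot be removed by $J_{(2,1)}^*$ since $J_{(2,1)}^*$ removes configurations of row-length type $(2,1)$, and the only genuinely three-row possibility is then handled by passing to conjugates (a vertical situation for $\nu$ becomes a two-row situation for $\nu'=(2,1)$ as well, using $\langle J_\mu J_\nu,J_\la\rangle=\langle J_{\mu'}J_{\nu'},J_{\la'}\rangle$). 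Hence every triple $(\mu,(2,1),\la)$ with nonzero coefficient falls under the two-row hypothesis after possibly conjugating, and the general statement follows.

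The main obstacle I anticipate is the straddling-corners case: when $q_1^*$ and then $q_n^*$ act, there can be several intermediate partitions $\la^{(1)}$ with nonzero coefficient, and one must verify that after the second step only the single desired $\mu$ survives (so that no cancellation or sign problem arises) and that the leftover $q_{(n+1)}^*$ contribution either vanishes or is absorbed positively. Making this bookkeeping precise — essentially a careful application of the dominance inequalities from Lemma~\ref{L:complementofrectangular} together with the explicit positivity of the Pieri coefficients in Theorem~\ref{T:Pieriformula} — is where the real work lies, but it is structurally identical to the argument already carried out for Theorem~\ref{T:Jack*actsonsquareJack}.
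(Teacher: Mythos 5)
Your treatment of the two-row $(n,1)$ case follows the paper's route (expand $J_{(n,1)}$ into the $q_nq_1$ and $q_{n+1}$ pieces, apply Pieri twice, track which intermediate partitions survive), but the actual content of the claim there is precisely the step you defer: since the coefficient of $q_{n+1}$ in $J_{(n,1)}$ is negative, one must verify that the combination $(1+n\al)(a_1a_2+b_1b_2)-(n+1)c$ of Pieri coefficients is a product of nonnegative-integer linear polynomials. Saying ``one checks'' that the $q_{n+1}$ contribution ``is absorbed positively'' is not a proof; the cancellation analysis in the parameters $n,a,s,t$ (the row lengths and the gaps between the two affected rows/columns) is exactly where the proposition lives, and your proposal supplies no argument for it.

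The more serious gap is in your reduction for $\nu=(2,1)$. Three boxes of $\la-\mu$ lying in three distinct rows need not sit in a single column: the generic remaining configuration is three boxes in three \emph{different corners} of $\la$, i.e.\ in three distinct rows and three distinct columns. Conjugation does not rescue this case — $\la'-\mu'$ is again a three-corner configuration, so it never becomes a two-row situation — and moreover the identity you invoke is only a nonvanishing equivalence in the paper (Proposition about removal in one corner); for Jack polynomials the $\omega$-duality inverts the parameter $\al$, so equality of the polynomial values $\langle J_\mu J_\nu,J_\la\rangle=\langle J_{\mu'}J_{\nu'},J_{\la'}\rangle$ cannot be used to transport positivity in $\al$. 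The paper treats the three-corner case by a separate explicit computation: writing the coefficient as $\al^2$ times hook-length products over the critical rows and columns times a degree-$7$ polynomial $f(\al)$ whose coefficients lie in $\mathbb{Z}_{\geq0}[a,b,c,d]$, where $a,b,c,d$ measure the gaps between the three corners. Note also that in this case the answer is claimed only to lie in $\mathbb{Z}_{\geq0}[\al]$, not to factor into linear pieces, which is why the proposition's statement distinguishes the two-row hypothesis from the general $(2,1)$ conclusion; your proposed reduction would (incorrectly) prove the stronger factored form in all cases. As it stands, the three-corner case — the substantive new computation in the paper's proof — is missing from your argument.
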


\begin{proof}
We first prove the $(n,1)$-case. Assume that among the $n+1$ boxes of $\la-\mu$, there are $n-a$ boxes in
the same row of one corner, and $a+1$ boxes in the same row of a
lower corner. We mark the two right-most boxes on the two rows by $A$ and $B$. We also denote $t$ (resp. $s$)
as the number of rows (resp. columns) between the rows (resp.
columns) with removed boxes. See the following Fig.3 (a).

 \setlength{\unitlength}{0.65cm}
\begin{picture}(1,1)
 \put(0,0){\line(1,0){9}}
 \put(0,0){\line(0,-1){6.5}}%
 \put(8.5,-1.7){\line(0,-1){1.3}}
 \put(8.5,-3){\line(-1,0){3.5}}

  \put(4,-4.9){\line(0,-1){1.1}}
 \put(4,-6){\line(-1,0){3.2}}

 \put(8.5,-2.4){\line(-1,0){0.6}}
  \put(7.9,-2.4){\line(0,-1){0.6}}
 \put(8.5,-2.4){\line(-1,0){2.8}}
\put(5.7,-2.4){\line(0,-1){0.6}}
\put(8,-2.95){${A}$}
 \put(4,-5.4){\line(-1,0){0.6}}
  \put(3.4,-5.4){\line(0,-1){0.6}}
 \put(4,-5.4){\line(-1,0){2.5}}
\put(1.5,-5.4){\line(0,-1){0.6}}
\put(3.5,-5.9){$B$}
 \put(2,-6.3){\vector(-1,0){0.5}}
  \put(2.1,-6.5){$a+1$}
  \put(3.5,-6.3){\vector(1,0){0.5}}
 \put(6.2,-3.4){\vector(-1,0){0.5}}
  \put(6.3,-3.5){$n-a$}
  \put(8,-3.4){\vector(1,0){0.5}}
 \put(4.6,-5.4){\vector(-1,0){0.6}}
  \put(4.7,-5.6){$s$}
  \put(5.1,-5.4){\vector(1,0){0.6}}
    \put(5.7,-3.8){\vector(0,1){0.8}}
  \put(5.6,-4.4){$t$}
  \put(5.7,-4.6){\vector(0,-1){0.8}}
 \put(11,0){\line(1,0){8}}
 \put(11,0){\line(0,-1){6.5}}%

 \put(18.5,-0.6){\line(0,-1){1.3}}
 \put(18.5,-1.9){\line(-1,0){1.5}}

  \put(16.5,-2.9){\line(0,-1){1.1}}
 \put(16.5,-4){\line(-1,0){1.5}}
   \put(13.5,-4.9){\line(0,-1){1.1}}
 \put(13.5,-6){\line(-1,0){1.5}}
 \put(18.5,-1.3){\line(-1,0){0.6}}
  \put(17.9,-1.3){\line(0,-1){0.6}}
\put(17.90, -1.74){{\small $A_1$}}
 \put(16.5,-3.4){\line(-1,0){0.6}}
  \put(15.9,-3.4){\line(0,-1){0.6}}
\put(15.90, -3.8){{\small $A_2$}}
 \put(13.5,-5.4){\line(-1,0){0.6}}
  \put(12.9,-5.4){\line(0,-1){0.6}}
\put(12.90, -5.85){{\small $A_3$}}
  \put(14.5,-5.4){\vector(-1,0){1}}
  \put(14.6,-5.5){$a$}
  \put(14.9,-5.4){\vector(1,0){1}}

  \put(15.9,-4.5){\vector(0,1){0.5}}
  \put(16.0,-4.9){$b$}
  \put(15.9,-4.9){\vector(0,-1){0.5}}
 \put(17,-3.4){\vector(-1,0){0.5}}
  \put(17.1,-3.5){$c$}
  \put(17.4,-3.4){\vector(1,0){0.5}}

   \put(17.9,-2.5){\vector(0,1){0.6}}
  \put(18,-2.7){$d$}
  \put(17.9,-2.7){\vector(0,-1){0.6}}
  \put(3.4,-7.6){Fig.3 (a)}
  \put(14.4,-7.6){Fig.3 (b)}
\end{picture}
\vskip 5.2cm
  We can assume that $n-a,a+1\geq1$. In fact, if one of them is zero, then $\la-\mu$ lies on one row of $\la$, and $\la-\mu$ should be an upside-down $(n+1)$ of a corner of $\la$. This is a special case of the Proposition \ref{P:Jstar remove in one corner}.
Note that we have $J_{(n,1)}=(n-1)!\al^n[(1+n\al)q_nq_1-(n+1)q_{n+1}]$, and $\langle J_\nu J_\nu, J_\la\rangle=\langle J_\mu, J_\nu^*.J_\la\rangle$. Using Pieri formula, we compute the coefficient $c$ of $J_\mu$ in $q_{n+1}^*.J_\la$. For $q_n^*q_1^*.J_\la$, we compute coefficient $a_1$ of $J_{\la-A}$ in $q_1^*.J_\la$, coefficient $a_2$ of $J_\mu$ in $q_{n}^*.J_{\la-A}$. We also compute coefficient $b_1$ of $J_{\la-B}$ in $q_1^*.J_\la$, coefficient $b_2$ of $J_\mu$ in $q_{n}^*.J_{\la-B}$. Then $\langle J_\mu J_{(n,1)},J_\la\rangle=(n-1)!\al^n[(1+n\al)(a_1a_2+b_1b_2-(n+1)c)]\langle J_\mu, J_\mu\rangle$ is found to be a product of linear polynomials of $\al$ with nonnegative integer coefficients.
%

Now we prove the case of $(2,1)$. The idea is same as the $(n,1)$-case.  Among the three boxes of $\la-\mu$,
 if any two of them are in the same column of $\la$, then by Pieri formula only $q_2^*q_1^*$ contributes to $J_\mu$ in $J_{(2,1)}^*.J_\la= \al^2[(1+2\al)q_2^*q_1^*-3q_3^*].J_\la$, and it can easily be seen that the proposition is true. If any two of them are in a same column of $\la$, it is also true by the $(n,1)$-case. Thus we only need to prove the case when the three boxes of $\la-\mu$ are in three different corners of $\la$, see Fig.3 (b). We mark the three boxes of $\la-\mu$ by $A_1, A_2, A_3$, $d$ (resp. $c$) is the number of rows (resp. columns) between boxes $A_1$ and $A_2$. Similarly we define $a, b$ as shown in Fig.3 (b). A row (resp. column) of $\mu$ is called a critical row (resp. column) if it lies on a row (resp. column) of $\la$ which intersects $\la-\mu$. Now
 \begin{align*}
&\langle J_\mu J_{(2,1)},J_\la\rangle=\al^2h^*_\la(R-R\cap C)h_*^\la(C-R\cap C)h^*_\mu(\mu-R)h_*^\mu(\mu-C)f(\al),
\end{align*}
where $R$ (resp. $C$) is the union of the critical rows (resp. columns) of  $\mu$ , and $f(\al)=\sum_{i=0}^7c_{i}\al^i,$
with $c_i\in\mathbb{Z}_{\geq0}[a,b,c,d]$, thus the positivity follows.
\end{proof}

\subsection{Filtration of rectangular partitions} Using Theorem
\ref{T:Jack*actsonsquareJack}, we can derive Jack function of any
shape starting from Jack functions of rectangular shapes, which were
already realized in \cite{CJ} using vertex operator technique.
\begin{definition}
Define maps $\mathfrak{R}, \mathfrak{C}$ on $\mathcal{P}$ by $\mathfrak{R}(\la)=(l(\la')^{l(\la)})$, $\mathfrak{C}(\la)=\mathfrak{R}(\la)-'\la$. Define $R_i(\la)=\mathfrak{R}(\mathfrak{C}^{i-1}(\la))$ ($i\geq1$, $\mathfrak{C}^0(\la)=\la$). Set $s=n_c(\la)$ the number of corner of $\la$, $R_\la=(R_1,\cdots,R_s)$ is called the rectangular filtration of $\la$.
\end{definition}
\begin{example}\label{E:rectangularfiltration}
 For $\la=(6,3,3,2,1)$, note that $l(\la')=\la_1$, so $R_1(\la)=\mathfrak{R}(\la)=(6^5)$. $\mathfrak{C}^1(\la)=\mathfrak{C}(\la)=(6^5)-'(6,3,3,2,1)=(5,4,3,3)$, $R_2(\la)=\mathfrak{R}(\mathfrak{C}^1(\la))=(5^4)$. Similarly, it can be found that $\mathfrak{C}^2(\la)=\mathfrak{C}(\mathfrak{C}(\la))=(2,2,1)$. $R_3(\la)=(2^3)$, $\mathfrak{C}^3(\la)=(1)$ and $R_4(\la)=(1^1)$. Thus the filtration of $\la$ is $R_\la=((6^5), (5^4), (2^3), (1^1))$. The following figure illustrates the pictorial construction of the rectangular filtration. The $\la$ is the partition above the bold line in picture (1), the whole
rectangle in picture (1) (with diagonal $a_0a_1$) is $R_1$. $R_1-\la$ is the subset under the bold line in picture (1), it is an upside-down $\mathfrak{C}^1(\la)=(5,4,3,3)$ of (the corner of) $R_1$. The whole rectangle (2) (with diagonal $a_1a_2$) is $R_2$, it is also the sub-rectangle with diagonal $a_1a_2$ in picture (1). Similarly, the whole rectangle in picture (i) with diagonal $a_{i-1}a_i$ is $R_i$. In fact, all rectangle $R_i$ is marked in picture (1) with diagonal $a_{i-1}a_i$.
\end{example}
 \setlength{\unitlength}{0.5cm}
\begin{picture}(1,1)
\put(-0.1,-0.2){$\bullet$} \put(-0.25,0.2){$a_0$}
 \put(0,0){\line(1,0){6}}
 \put(0,-5){\line(1,0){6}} \put(0,0){\line(0,-1){5}} \put(6,0){\line(0,-1){5}}
\put(6.05,-5.2){$\bullet$}\put(5.9,-4.8){$a_1$}
\put(0.9,-1.2){$\bullet$}\put(0.75,-0.8){$a_2$}
  \put(2.9,-4.2){$\bullet$}
 \put(2.75,-3.8){$a_3$}
   \put(1.9,-3.2){$\bullet$}
 \put(1.75,-2.8){$a_4$}
\put(0,-1){\line(1,0){6}}
\put(0,-2){\line(1,0){6}}\put(0,-3){\line(1,0){6}}\put(0,-4){\line(1,0){6}}
\put(1,0){\line(0,-1){5}}\put(2,0){\line(0,-1){5}}\put(3,0){\line(0,-1){5}}
\put(4,0){\line(0,-1){5}}\put(5,0){\line(0,-1){5}}
\put(0,-5.03){\line(1,0){1.03}}\put(0,-5.06){\line(1,0){1.03}}
\put(1.05,-5.05){\line(0,1){1}}\put(1.03,-5.05){\line(0,1){1}}
\put(1.02,-4.06){\line(1,0){0.96}}\put(1.02,-4.03){\line(1,0){0.96}}
\put(2.04,-4.05){\line(0,1){0.96}}\put(2.02,-4.02){\line(0,1){0.96}}
\put(2.05,-3.05){\line(1,0){0.96}}\put(2.02,-3.02){\line(1,0){0.96}}
\put(3.05,-3.05){\Large\line(0,1){2}}
\put(3.05,-1.02){\Large\line(1,0){2.94}}\put(3.05,-1.05){\Large\line(1,0){2.94}}
 \put(8,-5){\line(1,0){5}}
 \put(13,-1){\line(0,-1){4}}
 \put(13.05,-5.2){$\bullet$}
 \put(12.9,-4.8){$a_1$}
 \put(7.9,-1.2){$\bullet$}
 \put(7.75,-0.8){$a_2$}
  \put(9.9,-4.2){$\bullet$}
 \put(9.75,-3.8){$a_3$}
   \put(8.9,-3.2){$\bullet$}
 \put(8.75,-2.8){$a_4$}
\put(8,-1){\line(1,0){5}}
\put(8,-2){\line(1,0){5}}\put(8,-3){\line(1,0){5}}\put(8,-4){\line(1,0){5}}
\put(8,-1){\line(0,-1){4}}  \put(9,-1){\line(0,-1){4}} \put(10,-1){\line(0,-1){4}}
\put(11,-1){\line(0,-1){4}} \put(12,-1){\line(0,-1){4}}
\put(8.05,-5.05){\line(0,1){1}}\put(8.03,-5.05){\line(0,1){1}}
\put(8.02,-4.06){\line(1,0){0.96}}\put(8.02,-4.03){\line(1,0){0.96}}
\put(9.04,-4.05){\line(0,1){0.96}}\put(9.02,-4.02){\line(0,1){0.96}}
\put(9.05,-3.05){\line(1,0){0.96}}\put(9.02,-3.02){\line(1,0){0.96}}
\put(10.02,-3.02){\Large\line(0,1){2}}\put(10.03,-3.02){\Large\line(0,1){2}}
\put(10.05,-1.02){\Large\line(1,0){2.94}}\put(10.05,-1.05){\Large\line(1,0){2.94}}
 \put(14.9,-1.2){$\bullet$}\put(14.75,-0.8){$a_2$}  \put(16.9,-4.2){$\bullet$}\put(16.75,-3.8){$a_3$}
   \put(15.9,-3.2){$\bullet$}\put(15.75,-2.8){$a_4$}
\put(15,-1){\line(1,0){2}}
\put(15,-2){\line(1,0){2}}\put(15,-3){\line(1,0){2}}\put(15,-4){\line(1,0){2}}
\put(15,-1){\line(0,-1){3}}  \put(16,-1){\line(0,-1){3}} \put(17,-1){\line(0,-1){3}}
\put(15.02,-4.06){\line(1,0){0.96}}\put(15.02,-4.03){\line(1,0){0.96}}
\put(16.04,-4.05){\line(0,1){0.96}}\put(16.02,-4.02){\line(0,1){0.96}}
\put(16.05,-3.05){\line(1,0){0.96}}\put(16.02,-3.02){\line(1,0){0.96}}
\put(17.02,-3.02){\Large\line(0,1){2}}\put(17.03,-3.02){\Large\line(0,1){2}}
  \put(21.9,-4.2){$\bullet$}
 \put(21.75,-3.8){$a_3$}
   \put(20.9,-3.2){$\bullet$}
 \put(20.75,-2.8){$a_4$}
\put(21,-3){\line(1,0){1}}
\put(21,-4){\line(1,0){1}}
 \put(21,-3){\line(0,-1){1}} \put(22,-3){\line(0,-1){1}}
\put(21.04,-4.05){\line(0,1){0.96}}\put(21.02,-4.02){\line(0,1){0.96}}
\put(21.05,-3.05){\line(1,0){0.96}}\put(21.02,-3.02){\line(1,0){0.96}}
\put(6,-7){Fig.3 rectangular filtration}
\put(2.8,-6){(1)}\put(10.2,-6){(2)}\put(15.7,-6){(3)}\put(21,-6){(4)}
 \end{picture}
 \vskip 3.5cm

\begin{lemma}\label{L:RC}
 For a nonzero partition $\la$, $\mathfrak{R}(\la)$ is the minimum rectangular partition containing $\la$ with respect to the partial order $\subset$, and $n_c(\mathfrak{C}(\la))=n_c(\la)-1$.
\end{lemma}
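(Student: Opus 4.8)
The plan is to treat the two assertions in turn, each by unwinding the definitions of $\mathfrak{R}$, $\mathfrak{C}$, and $n_c$ in terms of the distinct parts of $\la$; nothing beyond bookkeeping is required.

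For the first assertion I would argue as follows. A rectangular partition $(r^t)$ satisfies $\la\subset(r^t)$ precisely when $Y(\la)\subseteq Y((r^t))$, i.e.\ when $\la_1\le r$ and $l(\la)\le t$. Since $l(\la')=\la_1$, the partition $\mathfrak{R}(\la)=(l(\la')^{l(\la)})=(\la_1^{l(\la)})$ contains $\la$; and whenever $(r^t)\supset\la$ one has $r\ge\la_1$ and $t\ge l(\la)$, whence $Y(\mathfrak{R}(\la))\subseteq Y((r^t))$, that is $\mathfrak{R}(\la)\subset(r^t)$. So $\mathfrak{R}(\la)$ sits below every rectangle containing $\la$ and is therefore the $\subset$-minimum such rectangle. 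This step is immediate.

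For the second assertion, write $\la=(a_1^{n_1}a_2^{n_2}\cdots a_s^{n_s})$ with $a_1>a_2>\cdots>a_s>0$, so that $n_c(\la)=s$, $l(\la)=n_1+\cdots+n_s=:m$, and $\la_1=a_1=:r$, hence $\mathfrak{R}(\la)=(r^m)$. Using the definition of the rectangle-complement $-'$ I would compute $\mathfrak{C}(\la)=\mathfrak{R}(\la)-'\la=(r-\la_m,r-\la_{m-1},\ldots,r-\la_1)$; reading off this multiset of parts from the bottom row of $\la$ upward, the value $a_1-a_s$ occurs with multiplicity $n_s$, then $a_1-a_{s-1}$ with multiplicity $n_{s-1}$, and so on up to $a_1-a_2$ with multiplicity $n_2$, and finally $a_1-a_1=0$ with multiplicity $n_1$. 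Hence the distinct positive parts of $\mathfrak{C}(\la)$ are exactly $a_1-a_s>a_1-a_{s-1}>\cdots>a_1-a_2$, a list of $s-1$ values, so $n_c(\mathfrak{C}(\la))=s-1=n_c(\la)-1$.

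The one place to be careful is the boundary case $s=1$, i.e.\ $\la$ rectangular: then $\mathfrak{C}(\la)$ is the empty partition, and the identity $n_c(\mathfrak{C}(\la))=n_c(\la)-1$ forces the convention $n_c(\emptyset)=0$, which is natural since $n_c$ counts distinct positive parts. I expect no other obstacle; the only thing needing attention is keeping straight the order-reversal built into $-'$, so that the largest parts of $\la$ (equal to $a_1=r$) correspond to the zero entries of $\mathfrak{C}(\la)$ while the smallest part $a_s$ produces the largest part $a_1-a_s$ of $\mathfrak{C}(\la)$.
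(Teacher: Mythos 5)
Your proposal is correct and follows essentially the same route as the paper: minimality of $\mathfrak{R}(\la)$ from the characterization $\la\subset(r^t)\Leftrightarrow r\geq\la_1,\ t\geq l(\la)$, and the corner count from the explicit computation $\mathfrak{C}(\la)=((a_1-a_s)^{n_s}\cdots(a_1-a_2)^{n_2})$, which visibly has $s-1$ distinct positive parts. Your remark on the rectangular case $s=1$ (convention $n_c(\emptyset)=0$) is a harmless extra precaution not spelled out in the paper.
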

\begin{proof}
If a rectangular partition $\nu=(a^b)$ contains $\la$, then we should have $a\geq \la_1$ and $b\geq l(\la)$ which means $\mathfrak{R}(\la)\subset\nu$. This proves the minimum of $\mathfrak{R}(\la)$. To prove the statement about the number of corners. We write $\la=(a_1^{n_1}\cdots a_s^{n_s})$, with $a_1>a_2>\cdots>a_s>0$ and $m_{a_i}(\la)=n_i>0$. By definition of the compliment partiton we have
\begin{align}
\mathfrak{C}(\la)&=(a_1^{n_1+\cdots+n_s})-'(a_1^{n_1}a_2^{n_2}\cdots a_s^{n_s})\\\nonumber
&=((a_1-a_s)^{n_s}(a_1-a_{s-1})^{n_{s-1}}\cdots (a_1-a_2)^{n_{2}}).
\end{align}
Thus $n_c(\la^1)=s-1$ and the lemma follows because $n_c(\la)=s$.
\end{proof}
We need the following lemma in the proof the realization of Jack functions of general shapes.
\begin{lemma}\label{L:filtrationiterative} If $R_\la=(R_1,\cdots,R_s)$ is the rectangular filtration of $\la$, $s\geq 2$, then $(R_2, R_3,\cdots,R_s)$ is a rectangular filtration of $\nu=\mathfrak{C}(\la)$.
\end{lemma}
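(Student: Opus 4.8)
The plan is to unwind the definition of the rectangular filtration and reduce the statement to the composition law for the iterates of $\mathfrak{C}$, together with the corner count supplied by Lemma \ref{L:RC}. There is genuinely no hard computation here; the content is purely bookkeeping of indices.

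First I would pin down the length of the filtration of $\nu$. Since $s=n_c(\la)\geq 2$, Lemma \ref{L:RC} gives $n_c(\nu)=n_c(\mathfrak{C}(\la))=s-1\geq 1$. In particular $\nu$ is a nonzero partition, so its rectangular filtration is defined and is a sequence of exactly $s-1$ rectangles, $R_\nu=(R_1(\nu),\dots,R_{s-1}(\nu))$, where $R_i(\nu)=\mathfrak{R}(\mathfrak{C}^{i-1}(\nu))$.

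Next I would identify each term. Since $\mathfrak{C}$ is a self-map of the set of partitions and $\mathfrak{C}^0$ is the identity, its iterates satisfy $\mathfrak{C}^{i-1}(\mathfrak{C}(\la))=\mathfrak{C}^{i}(\la)$ for every $i\geq 1$. As $\nu=\mathfrak{C}(\la)=\mathfrak{C}^1(\la)$, substituting yields
$$R_i(\nu)=\mathfrak{R}(\mathfrak{C}^{i-1}(\nu))=\mathfrak{R}(\mathfrak{C}^{i}(\la))=R_{i+1}(\la),\qquad i=1,\dots,s-1.$$
Hence $R_\nu=(R_2(\la),R_3(\la),\dots,R_s(\la))=(R_2,\dots,R_s)$, which is exactly the claim.

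The only non-formal ingredient is the identity $n_c(\mathfrak{C}(\la))=n_c(\la)-1$ from Lemma \ref{L:RC}; everything else is definitional. Accordingly, the only thing that requires care — the ``main obstacle,'' such as it is — is to check that the index shift produced by replacing $\la$ by $\mathfrak{C}(\la)$ is compatible with the indexing in the definition of $R_i$, and that the filtration of $\nu$ has precisely the right number of terms so that no rectangle of $R_\la$ is dropped or repeated. Both points are settled by the corner count and the iteration identity above.
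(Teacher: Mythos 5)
Your proposal is correct and is essentially identical to the paper's own argument: both invoke Lemma \ref{L:RC} to get $n_c(\nu)=s-1$ and then use $\mathfrak{C}^{i-1}(\mathfrak{C}(\la))=\mathfrak{C}^{i}(\la)$ to identify $R_i(\nu)=\mathfrak{R}(\mathfrak{C}^{i}(\la))=R_{i+1}(\la)$. No further comment is needed.
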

\begin{proof}
By Lemma \ref{L:RC}, $n_c(\nu)=s-1$. Thus we can assume that $R_\nu=(R^\nu_1,\cdots,R^\nu_{s-1})$ is the rectangular filtration of $\nu$. By definition, we have $R^\nu_i=\mathfrak{R}(\mathfrak{C}^{i-1}(\nu))=\mathfrak{R}(\mathfrak{C}^{i-1}(\mathfrak{C}(\la)))=\mathfrak{R}(\mathfrak{C}^i(\la))=R_{i+1}$.
\end{proof}
We would like to give a direct description of the rectangular filtration of a partition. For $\overrightarrow{n}=(n_1,\cdots, n_s)\in\mathbb{Z}^s$, define $\overrightarrow{n}^*=(n^*_1,\cdots, n^*_s)\in\mathbb{Z}^s$ by setting $n^*_{2i+1}=\sum_{i<j< s-i+1} n_j$, $n^*_{2i}=\sum_{i<j\leq s-i+1} n_j$.
For a partition $\la=(a_1^{n_1}a_2^{n_2}\cdots a_s^{n_s})$, where
$m_{a_i}(\la)=n_i>0$, we call the vector $\overrightarrow{n}=(n_1,n_2,\cdots,n_s)$ the
multiplicity type of $\la$.

From the Young diagram of $\la$ we can easily prove the following lemma.
\begin{lemma}For a partition $\la=(a_1^{n_1}a_2^{n_2}\cdots a_s^{n_s})$ with
$m_{a_i}(\la)=n_i>0$, the multiplicity type of $\la'$ is $\overrightarrow{p}=(p_1,p_2,\cdots,p_s)$ where
$p_i=a_{s+1-i}-a_{s+2-i}$ with $a_{s+1}=0$.
\end{lemma}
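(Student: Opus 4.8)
The plan is to read off the conjugate $\la'$ directly from the definition $\la'_j=\#\{i:\la_i\ge j\}$ and then put the answer into canonical multiplicity form. Write the distinct parts of $\la$ as $a_1>a_2>\cdots>a_s>0$, with $a_i$ occurring $n_i>0$ times, and adopt the convention $a_{s+1}=0$ from the statement. The columns of $Y(\la)$ are indexed by $j=1,2,\dots,a_1$, and these split into the consecutive blocks $\{\,j:a_{k+1}<j\le a_k\,\}$ for $k=1,\dots,s$, each of cardinality $a_k-a_{k+1}$.

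First I would show that the column height $\la'_j$ is constant on each such block. A part $\la_i$ contributes a box to column $j$ precisely when $\la_i\ge j$; if $a_{k+1}<j\le a_k$ then $a_k\ge j>a_{k+1}>\cdots>a_s$, so this happens exactly for the $n_1+\cdots+n_k$ parts equal to one of $a_1,\dots,a_k$. Hence $\la'_j=n_1+\cdots+n_k$ for all $j$ in that block. Collecting the blocks for $k=s,s-1,\dots,1$ (largest distinct part first, since each $n_i>0$ makes $n_1+\cdots+n_k$ strictly increasing in $k$) gives
\begin{equation*}
\la'=\big((n_1+\cdots+n_s)^{a_s}\,(n_1+\cdots+n_{s-1})^{a_{s-1}-a_s}\cdots(n_1)^{a_1-a_2}\big).
\end{equation*}

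Next I would verify that this is already the canonical form $(b_1^{q_1}\cdots b_s^{q_s})$ with $b_1>b_2>\cdots>b_s>0$ and every $q_i>0$: the exponents $a_k-a_{k+1}$ are positive because $a_k>a_{k+1}$, and the bases $n_1+\cdots+n_k$ are distinct and strictly decreasing as $k$ runs from $s$ down to $1$. Matching the $i$-th largest distinct part of $\la'$ with the block index $k=s+1-i$, its exponent is $a_{s+1-i}-a_{s+2-i}$, so the multiplicity type of $\la'$ is $\overrightarrow{p}=(p_1,\dots,p_s)$ with $p_i=a_{s+1-i}-a_{s+2-i}$ and $a_{s+1}=0$, as claimed.

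There is no serious obstacle: this is just the classical picture that the row lengths of $Y(\la')$ are the column heights of $Y(\la)$, together with the observation that those heights jump only at $j=a_s,a_{s-1},\dots$. The only thing to handle carefully is the index reversal — the natural listing of the distinct parts of $\la'$ runs over $k=s,s-1,\dots,1$, whereas $\overrightarrow{p}$ is indexed $i=1,\dots,s$ in the same (decreasing-size) order, so one must substitute $k=s+1-i$ consistently and keep the convention $a_{s+1}=0$ so that the top part $n_1+\cdots+n_s$ correctly receives multiplicity $a_s=a_s-a_{s+1}$.
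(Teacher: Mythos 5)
Your proof is correct, and it is exactly the argument the paper has in mind: the paper offers no written proof, merely remarking that the lemma follows easily from the Young diagram of $\la$, and your block-by-block computation of the column heights $\la'_j=n_1+\cdots+n_k$ for $a_{k+1}<j\le a_k$ (with the index reversal $k=s+1-i$ and the convention $a_{s+1}=0$) is the standard spelled-out version of that observation.
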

\begin{lemma}\label{L:starinductive}
For $\overrightarrow{n}=(n_1,\cdots, n_s)\in\mathbb{Z}^s$ ($s\geq2$), set $\overrightarrow{m}=(n_s,n_{s-1}\cdots, n_2)$.
 Then we have $\overrightarrow{m}^*=(n^*_2,n^*_3, \cdots, n^*_s)$.
\end{lemma}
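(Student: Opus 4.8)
The plan is to prove this by a direct reindexing of sums; no property of partitions or Jack functions is involved, so the lemma is purely a statement about how the reversal $\overrightarrow{n}\mapsto\overrightarrow{m}$ interacts with the operation $*$. I expect the whole argument to be short bookkeeping with no real obstacle.

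First I would write $\overrightarrow{m}=(m_1,m_2,\cdots,m_{s-1})$ explicitly, so that $m_k=n_{s+1-k}$ for $1\leq k\leq s-1$, and then apply the definition of $*$ to the \emph{length $s-1$} vector $\overrightarrow{m}$, taking care to replace $s$ by $s-1$ everywhere in the defining formulas. This gives, for the relevant ranges of $i$, the identities $m^*_{2i-1}=\sum_{j=i}^{s-i}m_j$ and $m^*_{2i}=\sum_{j=i+1}^{s-i}m_j$.

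Next I would split into the two parities of the index $k$ with $1\leq k\leq s-1$, and in each case perform the substitution $j\mapsto s+1-j$ to convert a sum of $m_j$'s into a sum of $n_l$'s. For $k=2i-1$ this gives $m^*_k=\sum_{j=i}^{s-i}n_{s+1-j}=\sum_{l=i+1}^{s-i+1}n_l=n^*_{2i}=n^*_{k+1}$, and for $k=2i$ it gives $m^*_k=\sum_{j=i+1}^{s-i}n_{s+1-j}=\sum_{l=i+1}^{s-i}n_l=n^*_{2i+1}=n^*_{k+1}$. Hence $m^*_k=n^*_{k+1}$ for every $k$, which is exactly the assertion $\overrightarrow{m}^*=(n^*_2,n^*_3,\cdots,n^*_s)$.

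The only point that demands care — and the only place an error could slip in — is keeping the asymmetry between the odd-index and even-index defining formulas correctly aligned with the shift $s\mapsto s-1$; there is no genuine difficulty beyond this off-by-one bookkeeping. As a safeguard I would first verify the identity by hand in the cases $s=3$ and $s=4$ before committing to the general argument.
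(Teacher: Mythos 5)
Your proposal is correct and follows essentially the same route as the paper: write $m_j=n_{s+1-j}$, apply the definition of $*$ with $s$ replaced by $s-1$, and reindex the sums via $j\mapsto s+1-j$ to get $m^*_{2i}=n^*_{2i+1}$ and $m^*_{2i-1}=n^*_{2i}$. The only difference is that you carry out both parity cases explicitly, whereas the paper computes the even case and leaves the odd case as "similarly."
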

\begin{proof}
Set $\overrightarrow{m}=(m_1, m_2\cdots, m_{s-1})$, then $m_j=n_{s+1-j}$ and
$$m^*_{2i}=\sum_{i<j\leq (s-1)-i+1}m_j=\sum_{i<j\leq s-i}n_{s+1-j}=\sum_{i+1\leq k<s+1-i}n_k=n^*_{2i+1}.$$
Similarly it can be computed that $m^*_{2i-1}=n^*_{2i}$.

\end{proof}
\begin{proposition}For a partition $\la$, assume that the multiplicity type $\la$ and $\la'$ are $\overrightarrow{n}=(n_1,\cdots, n_s)$, $\overrightarrow{p}=(p_1,p_2,\cdots,p_s)$, respectively. Then the filtration $R_\la=(R_1,R_2,\cdots,R_s)$ is given by $R_i=({p^*_i}^{n^*_i})$.
\end{proposition}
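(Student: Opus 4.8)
The plan is to prove the formula $R_i = ({p^*_i}^{n^*_i})$ by induction on $s = n_c(\la)$, exploiting Lemma~\ref{L:filtrationiterative}, which reduces the filtration of $\la$ to the filtration of $\mathfrak{C}(\la)$, together with Lemma~\ref{L:starinductive}, which is exactly the bookkeeping identity relating the starred multiplicity vector of $\la$ to that of the smaller partition obtained after one application of $\mathfrak{C}$.

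First I would settle the base case $s=1$: here $\la=(a_1^{n_1})$ is a rectangle, $n_c(\la)=1$, the filtration is $R_\la=(\la)$, and one checks directly from the definitions that $\overrightarrow{n}=(n_1)$ gives $n^*_1 = n_1$ while the conjugate has multiplicity type $\overrightarrow{p}=(a_1)$ (since $\la' = (n_1^{a_1})$), so $p^*_1 = a_1$ and indeed $R_1 = (a_1^{n_1}) = ({p^*_1}^{n^*_1})$. It is worth also recording the general identity $R_1 = \mathfrak{R}(\la) = (l(\la')^{l(\la)}) = (\la_1^{l(\la)})$, and noting that $\la_1 = a_1 = p_1 + p_2 + \cdots + p_s = \sum_{j} p_j$, which must match $p^*_1 = \sum_{1 < j < s} p_j \cdot$\,--- wait, more precisely $p^*_1 = n^*_1$ with the roles of $\overrightarrow{n}$ replaced by $\overrightarrow{p}$, so I would phrase the desired formula symmetrically as $R_i$ having $n^*_i$ rows and $p^*_i$ columns, and verify $R_1$ has $l(\la) = n_1 + \cdots + n_s = n^*_1$ rows and $\la_1 = p_1 + \cdots + p_s = p^*_1$ columns (using the convention $n^*_1 = \sum_{1<j<s+1}n_j = \sum_{j=1}^s n_j$, i.e. $i=0$ in $n^*_{2i+1}$).

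For the inductive step, suppose the proposition holds for all partitions with fewer than $s$ corners. Given $\la$ with $n_c(\la)=s\ge 2$ and multiplicity types $\overrightarrow{n}$ (for $\la$) and $\overrightarrow{p}$ (for $\la'$), set $\nu = \mathfrak{C}(\la)$. By Lemma~\ref{L:RC}, $n_c(\nu) = s-1$, and by the explicit formula in the proof of that lemma, $\nu = ((a_1-a_s)^{n_s}(a_1-a_{s-1})^{n_{s-1}}\cdots(a_1-a_2)^{n_2})$, so the multiplicity type of $\nu$ is $\overrightarrow{m} := (n_s, n_{s-1}, \ldots, n_2)$. By Lemma~\ref{L:starinductive}, $\overrightarrow{m}^* = (n^*_2, n^*_3, \ldots, n^*_s)$. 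Dually, one computes the multiplicity type of $\nu'$: since conjugation and $\mathfrak{C}$ interact in a controlled way (indeed $\mathfrak{C}$ on $\la$ corresponds to the analogous complement operation, and the parts $a_1 - a_i$ of $\nu$ have consecutive differences governed by the $p_j$'s), one finds the multiplicity type of $\nu'$ is the analogous truncation/reversal of $\overrightarrow{p}$, whose starred vector is $(p^*_2, \ldots, p^*_s)$ by the same Lemma~\ref{L:starinductive} applied to $\overrightarrow{p}$. Then the induction hypothesis applied to $\nu$ gives its filtration $R_\nu = (R^\nu_1, \ldots, R^\nu_{s-1})$ with $R^\nu_j = ({(p^*_{j+1})}^{n^*_{j+1}})$; but Lemma~\ref{L:filtrationiterative} says $R^\nu_j = R_{j+1}$, so $R_{j+1} = ({(p^*_{j+1})}^{n^*_{j+1}})$ for $j = 1, \ldots, s-1$, which together with the base-case computation of $R_1$ completes the induction.

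The main obstacle I anticipate is the dual bookkeeping: verifying cleanly that the multiplicity type of $(\mathfrak{C}(\la))'$ is obtained from $\overrightarrow{p}$ by the same reverse-and-truncate operation that sends $\overrightarrow{n}$ to $\overrightarrow{m}$. This requires unwinding the definition of $\mathfrak{C}$ together with the description of the multiplicity type of a conjugate partition ($p_i = a_{s+1-i} - a_{s+2-i}$), and checking that the consecutive-difference structure of the parts $\{a_1 - a_i\}$ of $\nu$ reproduces a truncation of the $p_j$'s. Once this symmetry between the row-side and column-side computations is in place, the application of Lemmas~\ref{L:filtrationiterative} and~\ref{L:starinductive} is formal. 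An alternative, perhaps cleaner, route would bypass the induction entirely by giving a direct geometric reading of the filtration from the staircase of corners $a_0, a_1, \ldots$ depicted in Figure~3: the rectangle $R_i$ has diagonal $a_{i-1}a_i$, and its side lengths are visibly alternating partial sums of the gaps between successive corner rows and columns, which are precisely the $n^*_i$ and $p^*_i$; I would present the inductive proof as the rigorous version of this picture.
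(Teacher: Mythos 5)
Your proposal is correct and follows essentially the same route as the paper: induction on the number of corners $s$, computing $R_1=({p^*_1}^{n^*_1})$ directly, and then passing to $\nu=\mathfrak{C}(\la)$ whose multiplicity types are the reversed truncations $(n_s,\ldots,n_2)$ and $(p_s,\ldots,p_2)$, so that Lemma~\ref{L:starinductive} and Lemma~\ref{L:filtrationiterative} finish the inductive step. The dual bookkeeping you flag as the main obstacle is exactly what the paper asserts (via its lemma on the multiplicity type of a conjugate, giving $q_i=p_{s+1-i}$), and it checks out by a one-line computation, so there is no real gap.
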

\begin{proof}
Set $\la=(a_1^{n_1}a_2^{n_2}\cdots a_s^{n_s})$, with
$m_{a_i}(\la)=n_i>0$.
We make induction on the number of the corners $n_c(\la)=s$. It is obviously true if $n_c(\la)=1$.
 If it is true when $n_c(\la)=s-1$, let us prove the case of $n_c(\la)=s$. Note that $p^*_1=p_1+\cdots+p_s=\sum_{1\leq i\leq s}(a_{s+1-i}-a_{s+2-i})=a_1$ ($a_{s+1}=0$), and $n^*_1=n_1+\cdots +n_s=l(\la)$, we have $R_1=({p^*_1}^{n^*_1})$.

 We set $\nu=\mathfrak{C}(\la)$ then $\nu=((a_1-a_s)^{n_s}(a_1-a_{s-1})^{n_s-1}\cdots (a_1-a_2)^{n_2})$. The multiplicity type of $\nu$ and $\nu'$ are $\overrightarrow{m}=(n_s, n_{s-1},\cdots, n_2)$ and $\overrightarrow{q}=(p_s, p_{s-1},\cdots, p_2)$, respectively. By the inductive assumption and Lemma \ref{L:starinductive} we have $R_\mu=(({q^*_1}^{m^*_1}),\cdots, ({q^*_{s-1}}^{m^*_{s-1}}))=(({p^*_2}^{n^*_2}),\cdots, ({p^*_{s}}^{n^*_{s}}))$. Applying Lemmas \ref{L:filtrationiterative} we have $R_i=({p^*_i}^{n^*_i})$ for $i\geq 2$.
\end{proof}
\
Recall that Jack functions of rectangular shapes were realized by vertex operators in
\cite{CJ}, thus the following theorem gives a vertex operator realization of Jack functions of general shapes.

\begin{theorem}\label{T:iterativeJack}
For a partition $\la$, let $R_\la=(R_1,R_2,\cdots,R_s)$ be the
rectangular filtration of $\la$ and denote the rectangular vertex operator representation
of $R_i$ by $f_i=Q_{R_i}$,
then there is a $c'_\la(\al)\in\mathbb{Q}(\al)\backslash\{0\}$ such that
\begin{align}\label{F:iterativeJack}
Q_\la&=c'_\la(\al)
(\cdots((f_s^*.f_{s-1})^*.f_{s-2})^*.\cdots.f_2)^*.f_1.
\end{align}
\end{theorem}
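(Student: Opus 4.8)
The plan is to prove \eqref{F:iterativeJack} by induction on the number of corners $s=n_c(\la)$, using the rectangular filtration and Theorem \ref{T:Jack*actsonsquareJack} (the rectangular case of Stanley's conjecture) as the engine that carries each step. The base case $s=1$ is immediate: then $\la$ is rectangular, the filtration is $R_\la=(R_1)$ with $R_1=\mathfrak{R}(\la)=\la$, and \eqref{F:iterativeJack} reads $Q_\la=c'_\la(\al)f_1=c'_\la(\al)Q_\la$, so $c'_\la(\al)=1$ works. For the inductive step, set $\nu=\mathfrak{C}(\la)$. By Lemma \ref{L:RC} we have $n_c(\nu)=s-1$, and by Lemma \ref{L:filtrationiterative} the rectangular filtration of $\nu$ is $R_\nu=(R_2,R_3,\cdots,R_s)$. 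The inductive hypothesis applied to $\nu$ then gives a nonzero scalar $c'_\nu(\al)$ with
\begin{align*}
Q_\nu=c'_\nu(\al)(\cdots((f_s^*.f_{s-1})^*.f_{s-2})^*.\cdots.f_3)^*.f_2.
\end{align*}
Thus the bracketed expression in \eqref{F:iterativeJack} equals $c'_\nu(\al)^{-1}Q_\nu^*.f_1=c'_\nu(\al)^{-1}Q_\nu^*.Q_{R_1}=c'_\nu(\al)^{-1}Q_\nu^*.Q_{\mathfrak{R}(\la)}$, so everything reduces to computing $Q_\nu^*.Q_{\mathfrak{R}(\la)}$.

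The heart of the argument is the claim that $Q_\nu^*.Q_{R_1}$ is a nonzero scalar multiple of $Q_\la$, where $R_1=\mathfrak{R}(\la)=(r^t)$ with $r=l(\la')=\la_1$ and $t=l(\la)$. By definition of the complement partition, $\nu=\mathfrak{C}(\la)=R_1-'\la=\overline{\la}$ in the notation of Theorem \ref{T:Jack*actsonsquareJack} (here $\la\subset R_1$ precisely because $R_1=\mathfrak{R}(\la)$). Now Theorem \ref{T:Jack*actsonsquareJack}, together with the translation between $J$- and $Q$-normalizations via Lemma \ref{L:triangular} (recall $J_\mu(\al^{-1})=h_\mu^*(\mu)Q_\mu(\al^{-1})$), tells us that $\langle Q_\nu^*.Q_{R_1},Q_\mu\rangle=\langle Q_{R_1},Q_\nu Q_\mu\rangle$ is nonzero if and only if $\mu=\overline{\nu}=\overline{\overline{\la}}=\la$ (using $\overline{\overline{\la}}=\la$ from Lemma \ref{L:complementofrectangular}). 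Hence $Q_\nu^*.Q_{R_1}$, which a priori is a finite linear combination of $Q_\mu$'s, is supported only on $\mu=\la$, i.e. $Q_\nu^*.Q_{R_1}=c\,Q_\la$ for some scalar $c$; and $c\neq0$ because the pairing $\langle Q_{R_1},Q_\nu Q_\la\rangle=\langle J_{R_1},J_\nu J_\la\rangle\cdot(\text{positive normalization factors})^{-1}$ is nonzero by the explicit product-of-linear-polynomials formula in Theorem \ref{T:Jack*actsonsquareJack}. Setting $c'_\la(\al)=c'_\nu(\al)\cdot c^{-1}$ completes the induction, and $c'_\la(\al)\in\mathbb{Q}(\al)\backslash\{0\}$ since both factors lie in $\mathbb{Q}(\al)$ (the scalar $c$ is a ratio of products of linear polynomials in $\al$) and are nonzero.

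One bookkeeping point deserves care, and I expect it to be the main technical obstacle: one must make sure that the parameter conventions match. Earlier sections (and the vertex operator realization of the $f_i=Q_{R_i}$) are stated in the $\al^{-1}$-normalization, whereas Section 4 switches to the $\al$-normalization for the scalar product; one should check that the adjoint $f_i^*$ appearing in \eqref{F:iterativeJack} is the adjoint with respect to whichever scalar product is in force and that Theorem \ref{T:Jack*actsonsquareJack} is being applied consistently — but since both normalizations differ only by the substitution $\al\leftrightarrow\al^{-1}$ and by positive rational rescalings of each basis vector, neither the "supported only on $\mu=\la$" conclusion nor the nonvanishing of $c$ is affected, only the precise rational function $c'_\la(\al)$. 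A secondary point is that, strictly, Theorem \ref{T:Jack*actsonsquareJack} is a statement about $J$'s; converting to $Q$'s introduces the factors $h^*_\mu(\mu)$, $h^*_\nu(\nu)$, $h^{*}_{R_1}(R_1)$, all of which are nonzero in $\mathbb{Q}(\al)$, so the qualitative conclusion survives. No deeper input is needed: the real content has already been extracted into Theorem \ref{T:Jack*actsonsquareJack} and the combinatorial lemmas on the filtration.
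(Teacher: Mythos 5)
Your proposal is correct and follows essentially the same route as the paper: induction on the number of corners $s=n_c(\la)$, using Lemma \ref{L:filtrationiterative} to apply the inductive hypothesis to $\mathfrak{C}(\la)$ and Theorem \ref{T:Jack*actsonsquareJack} to conclude $Q_{\mathfrak{C}(\la)}^*.Q_{\mathfrak{R}(\la)}\doteq Q_\la$. Your extra remarks (the support/nonvanishing argument for $Q_\nu^*.Q_{R_1}$ and the $\al$ versus $\al^{-1}$ normalization check) merely make explicit what the paper leaves implicit.
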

\begin{proof}
For polynomials $F_1, F_2$ in $\Lambda_F$, we write $F_1\doteq F_2$ if $F_1=cF_2$ for some $c\in F-\{0\}$. We use induction on $s=n_c(\la)$.
If $s=1$, it is trivial. Assuming (\ref{F:iterativeJack}) holds for the case of $s-1$ ($s\geq 2$), then $$Q_{\mathfrak{C}(\la)}\doteq(\cdots((f_s^*.f_{s-1})^*.f_{s-2})^*.\cdots.f_3)^*.f_2.$$
This is due to the fact that $(R_2,\cdots,R_s)$ is the rectangular filtration of $\mathfrak{C}(\la)$ by Lemma \ref{L:filtrationiterative}. Note that by Theorem \ref{T:Jack*actsonsquareJack}, we have $Q_\nu^*.Q_R\doteq Q_{R-'\nu}$ if $R$ is a rectangular partition and partition $\nu\subset R$. Thus we have
\begin{align}
Q_\la&\doteq Q_{\mathfrak{R}(\la)-'\mathfrak{C}(\la)}\doteq Q_{\mathfrak{C}(\la)}^*.Q_{\mathfrak{R}(\la)}\\\nonumber
&\doteq(\cdots((f_s^*.f_{s-1})^*.f_{s-2})^*.\cdots.f_2)^*.f_1.
\end{align}
\end{proof}
\begin{example}Let us look at the $\la$ in Example \ref{E:rectangularfiltration}.  By this theorem, $((Q_1^*Q_{(2^3)})^*Q_{(5^4)})^*Q_{(6^5)}$ is equal to $Q_\la$ up to a non-zero scalar multiplication. We can use our vertex operator formula to find the $Q_{(k^s)}$.

\end{example}
\begin{remark}
1. The coefficient $c'_\la(\al)$ can be explicitly evaluated just as
we have done in the proof of Theorem
\ref{T:Jack*actsonsquareJack}.\\
2. The argument of Theorem  \ref{T:iterativeJack} also works for Macdonald symmetric functions, thus we would have a vertex operator realization for Macdonald functions of general shapes if we had that for rectangular shapes.
\end{remark}
As an application, we have a combinatorial formula for Jack functions of general shapes.
\begin{corollary}\label{C:generalizedFrobenious}
For a partition $\la$, let $R_\la=(R_1,R_2,\cdots,R_s)$ be the
rectangular filtration of $\la$, denote $f_i=Q_{R_i}$,
then we have
\begin{align}\label{F:powerexpansion}
Q_\la=c'_\la(\al)\sum_{\underline{\xi}}\prod_{i=1}^s
g_{R_i,\xi^i}(\al)\prod_{i=1}^{s-1}z_{\dot{\xi}^i}\al^{l(\dot{\xi}^i)}\binom{m(\xi^i)}{m(\dot{\xi}^i)}p_{\dot{\xi}^0}
\end{align}
 where $c'_\la(\al)$ is a non-vanishing rational function of
$\al$, $g_{R_i,\xi^i}(\al)$ is defined by Definition
\ref{D:co.ofrectgularJack}, and the sum runs over sequences of
partitions $\underline{\xi}=(\xi^0,\xi^1,\cdots,\xi^s)$ associated
with
$\underline{\dot{\xi}}=(\dot{\xi}^0,\dot{\xi}^1,\cdots,\dot{\xi}^s)$
such that
$\dot{\xi}^i=\xi^{i+1}\backslash\dot{\xi}^{i+1}\subset'\xi^{i}$
for $i=s-1,\cdots,2,1,0$ with $\dot{\xi}^{s}=0$ and $\xi^0=\xi^1$.
\end{corollary}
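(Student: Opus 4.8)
The plan is to combine the iterative realization of Theorem~\ref{T:iterativeJack} with the power-sum expansion of the rectangular factors from Corollary~\ref{C:formula1}, and then to carry out all of the adjoint actions one power sum at a time. The single computational fact not already quoted is the adjoint of multiplication by a power sum: since $\langle p_\la,p_\mu\rangle=\delta_{\la\mu}z_\la\al^{l(\la)}$, multiplication by $p_n$ is adjoint to $n\al\,\partial/\partial p_n$, and iterating this gives, for any partitions $\rho,\sigma$,
\[
p_\rho^*.p_\sigma=\begin{cases}z_\rho\,\al^{l(\rho)}\binom{m(\sigma)}{m(\rho)}\,p_{\sigma\backslash\rho},&\text{if }\rho\subset'\sigma,\\[1mm]0,&\text{otherwise.}\end{cases}
\]
I would establish this first as a short lemma.

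With that in hand, I would write each rectangular factor as $f_i=Q_{R_i}=\sum_{\xi^i}g_{R_i,\xi^i}(\al)\,p_{\xi^i}$, which is the content of Corollary~\ref{C:formula1} together with Definition~\ref{D:co.ofrectgularJack}. Because conjugation on $\Lambda_F$ is $F$-linear, substituting these expansions into the nested formula~(\ref{F:iterativeJack}) reduces the problem to computing, from the inside out, the chain $G_s:=f_s$ and $G_{i-1}:=G_i^*.f_{i-1}$ for $i=s,s-1,\dots,2$, after which $Q_\la=c'_\la(\al)\,G_1$.

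The heart of the proof is then a downward induction on $i$ proving that
\[
G_i=\sum_{\underline\xi}\Big(\prod_{j=i}^{s}g_{R_j,\xi^j}(\al)\Big)\Big(\prod_{j=i}^{s-1}z_{\dot\xi^j}\,\al^{l(\dot\xi^j)}\binom{m(\xi^j)}{m(\dot\xi^j)}\Big)\,p_{\dot\xi^{i-1}},
\]
the sum running over sequences $(\xi^i,\dots,\xi^s)$ with $\dot\xi^s=0$ and $\dot\xi^j=\xi^{j+1}\backslash\dot\xi^{j+1}\subset'\xi^j$ for $j=s-1,\dots,i$. The base case $i=s$ is precisely the expansion of $f_s$, since there $\dot\xi^{s-1}=\xi^s$. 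For the inductive step one applies $(\cdot)^*$ to $G_i$ term by term and lets it act on $f_{i-1}$; the formula above for $p_\rho^*.p_\sigma$ with $\rho=\dot\xi^{i-1}$, $\sigma=\xi^{i-1}$ contributes exactly the new weight $z_{\dot\xi^{i-1}}\al^{l(\dot\xi^{i-1})}\binom{m(\xi^{i-1})}{m(\dot\xi^{i-1})}$, the new constraint $\dot\xi^{i-1}\subset'\xi^{i-1}$, and the new residual power sum $p_{\xi^{i-1}\backslash\dot\xi^{i-1}}=p_{\dot\xi^{i-2}}$, which is exactly the asserted formula at level $i-1$. Setting $i=1$ and multiplying by $c'_\la(\al)$ yields~(\ref{F:powerexpansion}); the one remaining listed constraint, for $i=0$, is $\dot\xi^0=\xi^1\backslash\dot\xi^1\subset'\xi^1=\xi^0$, which holds automatically.

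I expect the only real obstacle to be index management. There are two interleaved families of partitions: the ``input'' partitions $\xi^i$ produced by expanding the rectangular factors, and the ``carried'' partitions $\dot\xi^i$ generated by the recursion $\dot\xi^i=\xi^{i+1}\backslash\dot\xi^{i+1}$, which runs in the direction opposite to the order in which the operators are applied; aligning their ranges with the constraints stated in the corollary is what requires care. As a consistency check I would verify the homogeneity identity $|\dot\xi^0|=\sum_{i=1}^{s}(-1)^{i-1}|R_i|=|\la|$, which follows from $|R_i|=|\mathfrak{C}^{i-1}(\la)|+|\mathfrak{C}^{i}(\la)|$ and $\mathfrak{C}^{s}(\la)=\emptyset$. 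With the bookkeeping organized, everything else is a direct substitution.
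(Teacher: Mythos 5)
Your proposal is correct and follows essentially the same route as the paper: expand each rectangular factor $f_i=Q_{R_i}$ via Corollary \ref{C:formula1}, plug into the nested formula (\ref{F:iterativeJack}), and peel off the adjoint actions using the identity $p_\mu^*.p_\nu=z_\mu\al^{l(\mu)}\binom{m(\nu)}{m(\mu)}p_{\nu\backslash\mu}$, which is exactly the lemma the paper invokes. Your explicit downward induction on the partial products $G_i$ merely writes out the bookkeeping that the paper's terse proof leaves implicit.
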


\begin{proof} Note that Corollary \ref{C:formula1} implies the expression for
$Q_\la$ with the help of (\ref{F:iterativeJack}).  In the
computation we introduced the variables $\dot{\xi^i}$'s to simplify
the expression and we iteratively use the following identity
$$p_\mu^*.p_\nu=z_\mu\al^{l(\mu)}\binom{m(\nu)}{m(\mu)}p_{\nu\backslash\mu},$$
 which can be checked by the computation:
$(p_i^{m_i})^*.p_i^{n_i}=(i\al)^{m_i}m_i!\binom{n_i}{m_i}p_i^{n_i-m_i}$
for $m_i\leq n_i$.
\end{proof}

\bigskip

\centerline{\bf Acknowledgments}
The first author thanks the support of Universit\"at Basel for hospitality. The second author
gratefully acknowledges the partial support of Max-Planck Institut f\"ur Mathematik in Bonn, Simons Foundation grant 198129, NSFC grant 10728102, and NSF grants 1014554 and 1137837 during this work.

 \vskip 0.1in

\bibliographystyle{amsalpha}

\begin{thebibliography}{ABC}


\bibitem[BBL]{BBL} H. Belbachir, A. Boussicault, J.-G. Luque, {\em Hankel hyperdeterminants,
rectangular Jack polynomial and even powers of the Vandermonde}, J.
Algebra 320 (2008), 3911--3925.

\bibitem[CJ1]{CJ} W. Cai, N. Jing, {\em On vertex operator realizations of Jack functions},
 Jour. Alg. Comb. 32, (2010), 579--595.

\bibitem[CJ2]{CJ1} W. Cai, N. Jing, {\em Applications of
Laplace-Beltrami operator for Jack functions}, Europ. J. Combin. , to appear (arXiv:1101.5544).

\bibitem[FF]{FF} B. Feigin, E. Feigin, {\em Principal subspace for the bosonic vertex operator
$\phi_{\sqrt{2m}}(z)$ and Jack polynomials}, Adv.
Math. 206 (2006), no. 2, 307--328.  

%
\bibitem[F]{F} P. J. Forrester, {\em Jack polynomials and the multi-component Calogero-Sutherland model}.
Internat. J. Modern Phys. B 10 (1996), no. 4, 427--441.

%

\bibitem[Gu]{Gu} J. Gunson, {\em Proof of a conjecture by Dyson in the
statistical theory of energy levels}, J. Math. Phys.
3 (1962), 752--753.

\bibitem[H] {H}M. Haiman, {\em Hilbert schemes,polygraphs and
the Macdonald positivity conjecture}, J. Amer.Math. Soc. 14
(2001), 9641--1006.

\bibitem[Ja]{Ja} H. Jack, {\em A class of symmetric polynomials with a
parameter}, Proc. Roy. Soc. Edinburgh Sect. A 69 (1970/1971) 1--18.

\bibitem[J1]{J1} N. Jing, Vertex operators, {\em Vertex operators, symmetric functions,
and the spin group $\Gamma_n$}, J. Algebra 138 (1991), no. 2, 340--398.  

\bibitem[J2]{J2} N. Jing, {\em Vertex operators and Hall-Littlewood symmetric functions},
Adv. Math. 87 (1991), no. 2, 226--248.   
%
%

\bibitem [K]{K} K. W. J. Kadell {\em Aomoto¡¯s machine and the Dyson constant term
identity}, Methods Appl. Anal. 5 (1998), no. 4, 335--350.

\bibitem [KS]{KS} F. Knop, S. Sahi, {\em A recursion and a combinatorial formula for
Jack polynomials}, Invent. Math. 128 (1997), 9--22.

\bibitem[LLM] {LLM} L.Lapointe, A.Lascoux, J.Morse {\em Determinantal expression and recursion for Jack
polynomials}, Electron. J. Combin. v7 (2000), Note 1, 7 pp.
(electronic)

\bibitem[LP]{LP} J. Lepowsky, M. Primc,
{\em Structure of the standard modules for the affine Lie algebra
$A^{(1)}_1$}, Cont. Math., 46. Amer. Math. Soc., Providence, RI,
1985.




\bibitem[M]{M} I. G. Macdonald, {\em Symmetric functions and Hall polynomials},
 2nd ed., With contributions by A. Zelevinsky. Oxford Univ. Press, New York, 1995. 

\bibitem[Ma]{Ma} S. Matsumoto {\em Hyperdeterminantal expressions for Jack
functions of rectangular shapes}, J. Algebra 320 (2008),612--632.


%
\bibitem[MY]{MY} K. Mimachi, Y. Yamada, {\em Singular vectors of the Virasoro
algebra in terms of Jack symmetric polynomials}, Comm. Math.
Phy., 174 (1995), no. 2, 447--455. 

\bibitem[S]{S} R. Stanley, {\em Some combinatorial properties of Jack symmetric
functions}, Adv. Math. 77 (1989), no. 1, 76--115.

\bibitem[Wi]{Wi} K. G. Wilson, {\em Proof of a conjecture by
Dyson}, J. Math. Phys. {\bf 3} (1962), 1040--1043.




\end{thebibliography}

\end{document}